\theoremstyle{plain}
\newtheorem{theorem}{Theorem}[section]
\newtheorem{lemma}[theorem]{Lemma}
\newtheorem{corollary}[theorem]{Corollary}
\newtheorem{proposition}[theorem]{Proposition}
\theoremstyle{definition}
\newtheorem{definition}[theorem]{Definition}
\theoremstyle{remark}
\newtheorem{remark}[theorem]{Remark}
\numberwithin{equation}{section}
\newcommand{\cA}{\mathcal{A}}
\newcommand{\cB}{\mathcal{B}}
\newcommand{\cC}{\mathcal{C}}
\newcommand{\cF}{\mathcal{F}}
\newcommand{\cH}{\mathcal{H}}
\newcommand{\cI}{\mathcal{I}}
\newcommand{\cJ}{\mathcal{J}}
\newcommand{\cU}{\mathcal{U}}
\newcommand{\RR}{\mathbb{R}}
\newcommand{\CC}{\mathbb{C}}
\newcommand{\unit}{\mathbf{1}}
\newcommand{\id}{\mathrm{id}}
\newcommand{\iotabar}{\overline{\iota}}
\newcommand{\eps}{\varepsilon}
\newcommand{\GLoc}{G\!-\!\mathrm{Loc}}
\newcommand{\GpLoc}{G'\!-\!\mathrm{Loc}}
\newcommand{\Mob}{\mathsf{M\ddot{o}b}}
\newcommand{\PSU}{\mathsf{PSU}}
\newcommand{\tilbeta}{\widetilde{\beta}}
\newcommand{\tilgamma}{\widetilde{\gamma}}
\newcommand{\tillambda}{\widetilde{\lambda}}
\newcommand{\malpha}[1]{\alpha^-_{#1}}
\newcommand{\pmalpha}[1]{\alpha^\pm_{#1}}
\newcommand{\gpalpha}[2]{\alpha^{#1;+}_{#2}}
\newcommand{\gmalpha}[1]{\alpha^{-}_{#1}}
\DeclareMathOperator{\Ad}{Ad}
\DeclareMathOperator{\End}{End}
\DeclareMathOperator{\Aut}{Aut}
\DeclareMathOperator{\Hom}{Hom}
\DeclareMathOperator{\Rep}{Rep}
\DeclareMathOperator{\Diff}{Diff}
\title{\bf On Induction for Twisted Representations of Conformal Nets}
\author{
	{\sc Ryo Nojima} \\
	{\small Graduate School of Mathematical Sciences}\\
	{\small The University of Tokyo, Komaba, Tokyo, 153-8914, Japan}\\
    {\small E-mail: {\tt nojima@ms.u-tokyo.ac.jp}}
}
\date{\empty}
\begin{document}

\maketitle

\begin{abstract}
For a given finite index inclusion of conformal nets $\cB\subset \cA$ and a group $G < \Aut(\cA, \cB)$, we consider the induction and the restriction procedures for $G$-twisted representations.
We define two induction procedures for $G$-twisted representations, which generalize the $\alpha^{\pm}$-induction for DHR endomorphisms.
One is defined with the opposite braiding on the category of $G$-twisted representations as in $\alpha^-$-induction.
The other is also defined with the braiding, but additionally with the $G$-equivariant structure on the Q-system associated with $\cB\subset \cA$ and the action of $G$.
We derive some properties and formulas for these induced endomorphisms in a similar way to the case of ordinary $\alpha$-induction.
We also show the version of $\alpha\sigma$-reciprocity formula for our setting.
\end{abstract}

\section{Introduction} \label{section:introduction}
In the Haag-Kastler framework of quantum field theory, a chiral components of 2D conformal field theory is described by a conformal net on the unit circle $S^1$.
A conformal net $\cA$ is defined to be a map $I\mapsto \cA(I)$ from the set of open intervals of $S^1$ to that of von Neumann algebras. These von Neumann algebras are considered as algebras of observables and required to satisfy certain axioms.
We have a natural notion of representations of $\cA$ and the representation theory plays an important role in the study of conformal nets.
By the Doplicher-Haag-Roberts theory~\cite{DHRI71,DHRII74}, it turns out that every representation is equivalent to a localized transportable endomorphism (called a DHR endomorphism) and $\Rep(\cA)$ has a structure of a braided C*-tensor category.

For a given conformal net, we can consider its extensions and subnets.
Let $\cB$ a conformal net and $\cA$ an extension. This gives us a net of subfactors $\{\cB(I)\subset \cA(I)\}$. In the article~\cite{LR95}, general theory of nets of subfactors has been developed. By applying their results, the extension $\cA$ is completely characterized by a commutative Q-system (or standard C*-Frobenius algebra objects) $\Theta=(\theta, w, x)$ in $\Rep(\cB)$.
We also have induction and restriction procedures for representations of $\cB$ and $\cA$.
The induction procedure is called the $\alpha$-induction and the restriction procedure is called the $\sigma$-restriction, respectively.
The notions of $\alpha$-induction and $\sigma$-restriction were first introduced in~\cite{LR95}, and studied with examples  in~\cite{Xu98}, and then further developed in~\cite{BE1,BE2,BE3}.
For a DHR endomorpshim $\lambda$ of $\cB$, $\alpha^{\pm}_{\lambda}$ is given as an extension of $\lambda$ to $\cA$. The endomorphism $\alpha^{\pm}_{\lambda}$ is defined with the Q-system $\Theta$ and the braiding on $\Rep(\cB)$.

If we have a finite group $G < \Aut(\cA)$, we can construct a subnet by taking its fixed point net $\cA^G$. A net obtained in this way is called an orbifold. Orbifolds of conformal nets and their representations have been studied in~\cite{Xu00} and~\cite{LX04,KLX05}.
To study the categorical structure of $\Rep(\cA^G)$ more systematically, M\"uger introduced the category of $G$-twisted representation $\GLoc \cA$ in~\cite{Mueg05}.
This category consists of $g$-localized transportable endomorphisms for every $g\in G$ as its objects, in addition to DHR endomorphisms.
In the same article~\cite{Mueg05}, it has been shown that $\GLoc \cA$ has a structure of braided $G$-crossed category, which is a monoidal category with a group action of $G$ and with a certain kind of braiding (see Definition~\ref{def_braided_G_crossed_category}).
Also, the relation between $\GLoc \cA$ and $\Rep(\cA^G)$ was clarified:
There exists a braided equivalence $(\GLoc \cA)^G \cong \Rep(\cA^G)$.
Moreover, there exists a equivalence of braided $G$-crossed categories $\Rep(\cA^G)\rtimes \Rep(G)\cong \GLoc \cA$ (see~\cite{Mueg05}, for notations and terminology which are not explained here).
Thus the study of $\GLoc \cA$ leads to the study of $\Rep(\cA^G)$.

In this article, we consider a situation that we have a given inclusion of conformal nets $\cB\subset \cA$ and a group $G < \Aut(\cA)$ which preserves $\cB$ globally.
Let us denote by $G' < \Aut(\cB)$ the group obtained by restricting each element of $G$ to $\cB$.
For such a situation, it is natural to study the relation between the categories $\GLoc \cA$ and $\GpLoc \cB$.
An orbifold net $\cA^G \subset \cA$ gives us an example of such a situation. In this case, we have $G' = \{e\}$ and $\GpLoc \cB$ is the same as $\Rep(\cA)$.

Our question is how to capture the braided $G$-crossed category $\GLoc \cA$ in terms of the algebraic structure on $\cB$.
To answer this, we make the following observation. The category $\GpLoc \cB$ and the Q-system $\Theta$ associated with $\cB\subset \cA$ are not enough to recover the category $\GLoc \cA$, since in general $G'$ does not remember the original group $G$. Even if $G'$ is isomorphic to $G$, one cannot determine the position of $G$ in $\Aut(\cA)$ in general.
For our purpose, it is desirable to describe $G$ (and its position in $\Aut(\cA)$) by some algebraic structure related to $\cB$. This task is achieved by using a notion of $G$-equivariant Q-system structures (see Section~\ref{section:G_action_on_subfactors_G_equivariant_str}, for terminologies).
Let us explain this in detail.
Since $G$ also acts on $\cB$ by our assumption, we have the induced action of $G$ on $\Rep(\cB)$.
Then one can construct the canonical $G$-equivariant Q-system structure on $\Theta$.
Such a consideration has been already observed in~\cite[Section 6]{BJLP19} for the case $G\cong G'$.
We will summarize $G$-equivariant Q-system structures for the case of finite index subfactors with actions of $G$ in Section~\ref{section:G_action_on_subfactors_G_equivariant_str}.

Using $G$-equivariant Q-system structure and the braiding, we introduce two types of induced endomorphisms for objects in $\GpLoc \cB$.
These are defined by similar formulas as $\alpha$-induced endomorphisms. One is defined only with the opposite braiding on $\GpLoc \cB$ and the Q-system $\Theta$, but the other is defined with the braiding on $\GpLoc \cB$, the Q-system $\Theta$, and the $G$-equivariant structure on $\Theta$ as explained above.
We study their properties and derive some formulas as in the case of $\alpha$-induction.
We will see that many statements for $\alpha$-induction have natural translations for our setting.

This article is organized as follows.
In Section 2, we start from some preliminaries on C*-tensor categories of endomorphisms, subfactors, Q-systems. We then discuss $G$-equivariant structures on Q-systems. We also recall the definition of braided $G$-crossed categories.
In Section 3, we collect some preliminaries on conformal nets and its twisted representations. For later applications, we give proofs of some statements on twisted representations.
In Section 4, we introduce the induced endomorphisms for twisted representations and study their properties in a similar strategy as the case of $\alpha$-induction.
We explain our setting and notations in Section~\ref{subsection:ind_for_twisted_reps_assumptions}.
Then we define two induced endomorphisms for an object in $\GpLoc \cB$ in Section~\ref{subsection:ind_for_twisted_reps_def}.
In the rest of this article, we study their properties.
We first summarize basic properties and study intertwiner spaces between induced endomorphisms. We also discuss the relation to $\sigma$-restriction and derive a version of the $\alpha\sigma$-reciprocity formula. Finally, we show that the $G$-crossed braiding of $\GLoc \cA$ is recovered from the $G'$-crossed braiding of $\GpLoc \cB$.

\section{Preliminaries}

\subsection{C*-tensor categories of endomorphisms}

In this subsection, we review the C*-tensor category structure of endomorphisms of type III factors and related terminologies. We refer the reader to \cite[Chapter 2]{NTBook} for the basics of C*-tensor categories. By a C*-tensor category $\cC$, we always assume that $\cC$ is strict, closed under finite direct sums and subobjects. We also assume that the tensor unit $\unit\in \cC$ is simple, i.e. $\End(\unit) = \CC$. 

For our purpose, we also consider homomorphisms between type III factors in addition to endomorphisms. Let $N, M$ be type III factors and $\rho, \sigma\colon N\to M$ a pair of homomorphisms. (Unless otherwise said, the notion of a homomorphism means a unital $*$-preserving homomorphism.) The space of intertwiners from $\rho$ to $\sigma$ is defined by
\[
\Hom(\rho, \sigma) = \{t\in M : t\rho(n) = \sigma(n)t \enspace \textrm{for all} \ n\in N\}.
\]
Clearly, $\Hom(\rho, \sigma)$ is a $\CC$--linear space. We write $\langle \rho, \sigma \rangle := \dim\Hom(\rho, \sigma)$ for its dimension. The concatenation product of intertwiners $t_1\in \Hom(\rho_1, \rho_2)$ and $t_2\in\Hom(\rho_2, \rho_3)$ is defined by the multiplication of operators $t_2\circ t_1 := t_2\cdot t_1 \in\Hom(\rho_1, \rho_3)$. The tensor product of homomorphisms $\rho\colon N\to M$, $\sigma\colon L\to N$ is the composition $\rho\otimes\sigma := \rho\circ\sigma = \rho\sigma$. Let $\rho_1, \sigma_1\colon N\to M$ and $\rho_2, \sigma_2\colon L\to N$ be homomorphisms. The tensor product of intertwiners $t_1\in \Hom(\rho_1, \sigma_1)$ and $t_2 \in\Hom(\rho_2, \sigma_2)$ is
\[
t_1\otimes t_2 := t_1\rho_1(t_2) = \sigma_1(t_2)t_1 \in\Hom(\rho_1\rho_2, \sigma_1\sigma_2).
\]
With these structures, one can consider a 2--C*-category $\cC$ whose objects are a set of type III factors $\cC=\{N, M, ... \}$, the 1--morphisms are homomorphisms, and the 2--morphisms are intertwiners. If one considers the case $\cC = \{N\}$, the set of endomorphisms of $N$, denoted by $\End(N)$, has a C*-tensor category structure.

\begin{remark} \label{remark:non_closed_alg_tensor_cat_of_endomorphisms}
If $A\subset \cB(\cH)$ is a unital $*$-subalgebra of $\cB(\cH)$ on some Hilbert space, one can view $\End(A)$ as a $\CC$-linear monoidal category with positive $*$-operation as above. Note that $\End(A)$ is not a C*-tensor category in general. 
\end{remark}

We call $\rho\colon N\to M$ \emph{irreducible} if $\Hom(\rho,\rho)=\rho(N)'\cap M = \CC\cdot 1_M$. 
Two homomorphisms $\rho,\sigma\colon N\to M$ are called \emph{unitarily equivalent} if there exists a unitary $u\in \Hom(\rho,\sigma)$. The equivalence class of $\rho$ is called a \emph{sector} and denoted by $[\rho]$. 
The \emph{direct sum} of a finite family of homomorphisms $\{\rho_i\colon N\to M\}_{i=1}^n$ is
\[
\bigoplus_{i=1}^n \rho_i := \sum_i s_i\rho_i(\cdot) s_i^*
\]
where $s_i$ are isometries in $M$ with $\sum_i s_i s_i^* = 1$. Note that the direct sum is defined up to unitary equivalence. A homomorphism $\sigma$ is called a \emph{subobject} of $\rho$, denoted by $\sigma\prec\rho$, if there exists an isometry $s\in \Hom(\sigma,\rho)$. 

A homomorphism $\overline{\rho}\colon M\to N$ are said to be a \emph{conjugate} of $\rho\colon N\to M$ if there is a pair of intertwiners $R\in \Hom(\id_N,\overline{\rho}\rho)$ and $\overline{R}\in\Hom(\id_M,\rho\overline{\rho})$ satisfying the following \emph{conjugate equation}:
\begin{subequations}
\label{conjugate_eq}
\begin{align}
(1_\rho \otimes R^*)\cdot (\overline{R}\otimes 1_\rho) &\equiv \rho(R^*)\overline{R} = 1_\rho,  \label{conjugate_eq1} \\
(1_{\overline{\rho}} \otimes \overline{R}^*)\cdot (R \otimes 1_{\overline{\rho}}) &\equiv \overline{\rho}(\overline{R}^*)R = 1_{\overline{\rho}}. \label{conjugate_eq2}
\end{align}
\end{subequations}
The \emph{statistical dimension} $d\rho$ of $\rho$ is defined by the minimum of the numbers $\|R\|\cdot\|\overline{R}\|$ over all solutions. A solution $(R, \overline{R})$ of the conjugate equation is called \emph{standard} if $\|R\|=\|\overline{R}\|=d\rho^{1/2}$. The conjugate and the standard solution are unique up to unitary in the following sense: If $\overline{\rho}'\colon M\to N$ and $(R', \overline{R}')$ is another standard solution of the conjugate equation for $\rho$ and $\overline{\rho}'$, then there exists a unitary $u\in \Hom(\overline{\rho}, \overline{\rho}')$ such that
\begin{equation} \label{unitary_another_conjugate}
R' = (u\otimes 1_\rho)\cdot R \equiv uR \quad \text{and} \quad \overline{R}' = (1_\rho\otimes u)\cdot \overline{R}\equiv \rho(u)\overline{R}.
\end{equation}
For the convenience, we set $d\rho = \infty$ if $\rho$ does not have a conjugate. It is well known~\cite{LonI89, LonII90} that the statistical dimension is equal to the square root of the minimum index~\cite{Hiai88} of $\rho(N)\subset M$:
\begin{equation*}
d\rho = [M: \rho(N)]^{1/2}.
\end{equation*}

\subsection{Subfactors and Q-systems}

We briefly review the notion of Q-systems introduced by Longo~\cite{Lon94} and its relation to subfactors.

\begin{definition}\label{def_Q_system}
Let $\cC$ be a C*-tensor category.
A triple $\Theta = (\theta, w, x)$ with $\theta\in\cC$ and intertwiners $w\in\Hom(\id_N, \theta)$ and $x\in\Hom(\theta, \theta\otimes \theta)$ is called a \emph{Q-system} in $\cC$ if it satisfies 
\begin{alignat*}{2}
(x\otimes 1_\theta)x &= (1_\theta\otimes x)x, & \qquad & \text{(associativity)}\\
(w^*\otimes 1_\theta)x &= (1_\theta\otimes w^*)x = 1_\theta. & & \text{(unit law)}
\end{alignat*}
and
\begin{equation*} \label{Q_system_standardness}
w^*w = \sqrt{d\theta}\cdot 1_\id, \quad \text{and} \quad x^*x = \sqrt{d\theta}\cdot 1_\theta. \qquad \text{(standardness)}
\end{equation*}
A Q-system is called \emph{irreducible} if $\dim\Hom(\id_N, \theta) = 1$.

Two Q-systems $\Theta = (\theta, w, x)$ and $\Theta' = (\theta', w', x')$ in $\cC$ are called \emph{equivalent}, if there is a unitary $u\in\Hom(\theta, \theta')$ such that
\begin{align}
x'u &= (u\otimes u)x, \\
w' &= uw.
\end{align}
We call such unitary a \emph{unitary isomorphism of Q-systems}.
\end{definition}

\begin{remark}
\begin{enumerate}
\item It seems that there is no agreement on the definition of Q-systems. In some literatures, the triple as above is called a standard Q-system.
\item A Q-system $(\theta, w, x)$ automatically satisfies the Frobenius property~\cite{LR97}:
\begin{equation*}
(1_\theta\otimes x^*)\circ(x\otimes 1_\theta) = x\circ x^* = (x^*\otimes 1_\theta)\circ(1_\theta\otimes x).
\end{equation*}
Thus $(\theta, w, x)$ is a standard C*-Frobenius algebra object in $\cC$.
\end{enumerate}
\end{remark}

Let $N\subset M$ be a finite index type III subfactor with the inclusion map $\iota\colon N\hookrightarrow M$. Then $N\subset M$ gives a Q-system in $\End(N)$ as follows. Since we assume $[M: N] < \infty$, there is a conjugate $\iotabar: M\to N$ of $\iota$. Let $w\in\Hom(\id_N, \iotabar\iota)$ and $v\in\Hom(\id_M, \iota\iotabar)$ be a standard solution of the conjugate equation Eq.~\eqref{conjugate_eq} for $\iota$ and $\iotabar$. Then, it is easily checked that the triple $\Theta = (\theta, w, x)$ with
\begin{equation}  \label{Qsystem_from_subfactor}
\theta:=\iotabar\iota\in \End(N), \quad w\in \Hom(\id_N, \theta), \quad x:=\iotabar(v)\equiv 1_{\iotabar}\otimes v\otimes 1_{\iota}\in\Hom(\theta, \theta^2),
\end{equation}
is a Q-system. If $N\subset M$ is irreducible, then the corresponding Q-system $\Theta$ is also irreducible.
The endomorphism $\theta$ is called the \emph{dual canonical endomorphism} of $N\subset M$.
(The endomorphism $\iota\iotabar\in \End(M)$ is called the \emph{canonical endomorphism} of $N\subset M$.)
The map $E(\cdot)=\frac{1}{d\iota} w^*\overline{\iota}(\cdot)w\colon M\to N$ gives a conditional expectation from $M$ to $N$. By using the conjugate equation, it is easily seen that we have the following formula
\begin{equation} \label{subfactor_basis_expansion}
	m = d\iota \cdot E(mv^*)v, \quad \text{for} \ m\in M,
\end{equation}
and hence we have $M=Nv$.
\begin{remark} \label{remark:subfactor_expansion_uniqueness}
Using the conjugate equation, it is also easily seen  that $nv = 0$ implies $n=0$ for $n\in N$ (cf. \cite[Lemma 3.8]{BE1}).
\end{remark}

\begin{remark} \label{remark:Q_system_another_choice}
Let $\iotabar'\colon M\to N$ be another choice of a conjugate of $\iota$ and $(w', v')$ a standard solution of the conjugate equation for $\iota$ and $\iotabar'$.
We denote by $\Theta' = (\theta'=\iotabar'\iota, w', x'=\iotabar'(v'))$ the corresponding Q-system as above.
Since the conjugate and the associated standard solution is unique up to unitary,  there exists a unitary $u\in \Hom(\iotabar, \iotabar')$ satisfying
\begin{equation} \label{eq:iota_conjugate_uniqueness}
    w' = uw, \quad v' = \iota(u)v = uv,
\end{equation}
as in Eq.~\eqref{unitary_another_conjugate}.
One can check that $u = u\otimes 1_\iota\in \Hom(\theta, \theta')$ is a unitary isomorphism of Q-systems from $\Theta$ to $\Theta'$.
\end{remark}

Conversely, starting from an irreducible Q-system $\Theta$ in $\End(N)$, one can construct an irreducible overfactor $M\supset N$ such that $\Theta$ is a Q-system obtained as above~\cite{Lon94} (also cf.~\cite{BKLR15}). Therefore there is a one-to-one correspondence between irreducible Q-systems in $\End(N)$ up to equivalence and irreducible finite index subfactors $N\subset M$ up to conjugation.

\subsection{Braided C*-tensor categories and $\alpha$-induction}

Let $\cC$ be a C*-tensor category. A family of natural isomorphisms $\{\eps(\lambda, \mu)\colon \rho\otimes\mu \to \mu\otimes\lambda \}_{\lambda, \mu\in \cC}$ is called a \emph{braiding} on $\cC$ if it satisfies the usual hexagon identities (cf. \cite{EGNOBook}). In this article, we always assume that each $\eps(\lambda, \mu)$ is a unitary. For a braiding $\{\eps(\lambda, \mu) \}_{\lambda, \mu\in \cC}$, its opposite braiding is defined by $\eps^-(\lambda, \mu)=\eps(\mu, \lambda)^*$ and the family $\{\eps^-(\lambda, \mu)\}_{\lambda, \mu\in \cC}$ is also a braiding on $\cC$. We also write $\eps^+(\lambda, \mu)\equiv \eps(\lambda, \mu)$. A C*-tensor category equipped with a braiding is called a \emph{braided C*-tensor category}.

A Q-system $\Theta=(\theta, w, x)$ in a braided C*-tensor category $\cC$ is said to be \emph{commutative} if it satisfies $\eps(\theta, \theta)x = x$.

A braiding on a C*-tensor category $\cC$ is called \emph{non-degenerate} if the M\"uger center
\begin{equation*}
\cC \cap \cC' = \{\lambda\in \cC : \eps(\lambda, \mu)\eps(\mu, \lambda) = 1_{\mu\otimes \lambda} \ \text{for all} \ \mu\in \cC \}
\end{equation*}
is trivial, i.e. each objects in $\cC\cap \cC'$ is a finite direct sum of tensor unit.
A rigid braided C*-tensor category $\cC$ is called \emph{unitary modular tensor category} if there are only finitely many inequivalent irreducible objects and its braiding is non-degenerate.

Let $N\subset M$ be a finite index type III subfactor and $\cC$ a braided C*-tensor category realized as a full and replete subcategory of $\End(N)$. 
Suppose that the dual canonical endomorphism $\theta=\iotabar\iota$ of $N\subset M$ is in $\cC$.
For $\lambda\in \cC$, its \emph{$\alpha$-induction} is defined by
\begin{equation} \label{eq:def_of_alpha_induction}
\pmalpha{\lambda} = \iotabar^{-1}\circ \Ad(\eps^{\pm}(\lambda, \theta))\circ \lambda\circ\iotabar \in \End(M).
\end{equation}
The notion of $\alpha$-induction was first introduced in \cite{LR95}, and further studied in~\cite{Xu98} and~\cite{BE1,BE2,BE3} for nets of subfactors.

\subsection{Group actions on subfactors and $G$-equivariant Q-systems} \label{section:G_action_on_subfactors_G_equivariant_str}

Let us fix a group $G$ in this subsection. We summarize a relation between group actions on subfactors and $G$-equivariant structure on the associated Q-systems.

We first recall some terminologies.
By a \emph{strict action of $G$} on a strict monoidal category $\cC$, we mean a group homomorphism $\gamma\colon G \to \Aut^{\mathrm{strict}}_\otimes(\cC), g\mapsto \gamma_g$, where $\Aut^{\mathrm{strict}}_\otimes(\cC)$ denotes the group of all strict monoidal automorphisms of $\cC$.

For simplicity, we give the following definitions under certain strictness conditions for both categories and group actions.
\begin{definition}
Let $\gamma$ be a strict $G$-action on a strict monoidal category $\cC$.
A \emph{$G$-equivariant object} is a pair $(X, z)$ consisting of an object $X\in\cC$ and a family of isomorphisms $z = \{z_g\colon \gamma_g(X) \to X\}_{g\in G}$ such that the following diagram
\begin{equation} \label{G_eq_obj_diagram}
\xymatrix{
	\gamma_g(\gamma_h(X)) \ar[rr]^{\gamma_g(z_h)} \ar[drr]_{z_{gh}} && \gamma_g(X) \ar[d]^{z_g} \\
	&& X 
}
\end{equation}
commutes for all $g, h\in G$.

For two $G$-equivariant objects $(X, z)$ and $(X', z')$, a morphism $f\colon X \to X'$ is called a \emph{$G$-equivariant morphism} if the following diagram 
\begin{equation} \label{G_eq_morphism_diagram}
\xymatrix{
	\gamma_g(X) \ar[d]_{\gamma_g(f)} \ar[rr]^{z_g}&& X \ar[d]^f \\
	\gamma_g(X') \ar[rr]_{z'_g} && X'
}
\end{equation}
commutes for all $g\in G$.
\end{definition}

We now consider the C*-tensor category of endomorphisms.
Let $N$ be a type III factor and $\beta$ an action of $G$ on $N$. Then $\beta$ induces a strict action $\gamma$ of $G$ on $\End(N)$ by
\begin{alignat*}{2}
\gamma_g(\rho) &= \beta_g\circ\rho\circ\beta_g^{-1} &\qquad &\text{for} \ \rho\in\End(N), \\
\gamma_g(s) &= \beta_g(s) & &\text{for} \ s\in \Hom(\rho, \sigma).
\end{alignat*}
Let $\Theta = (\theta, w, x)$ be a Q-system in $\End(N)$. Then $g(\Theta) := (\gamma_g(\theta), \gamma_g(w), \gamma_g(x))$ is also a Q-system in $\End(N)$.
Assume that there exists a family of unitaries $z = \{z_g\colon \gamma_g(\theta)\to \theta\}_{g\in G}$. If $z_g$ is a unitary isomorphism of Q-system for each $g\in G$ and the pair $(\theta, z)$ is a $G$-equivariant object, then we call $(\Theta, z)$ a \emph{$G$-equivariant Q-system}.

Suppose that we have also an extension $M$ of $N$ such that $N\subset M$ is a finite index type III subfactor.
Let $\Theta=(\theta=\iotabar\iota, w, x=\iotabar(v))$ be a Q-system in $\End(N)$ associated with $N\subset M$.
We will see below that there exists a one-to-one correspondence between the following two.
\begin{itemize}
\item actions of $G$ on $M$ which extend $\beta$ (or equivalently, actions of $G$ on the subfactor $N\subset M$ which act as $\beta$ on $N$).
\item families of unitaries which define $G$-equivariant Q-system structures on $\Theta$.
\end{itemize}

\begin{remark}
We note that such a correspondence has been established in~\cite[Section 6]{BJLP19} for finite index inclusions of completely rational conformal nets $\cB\subset \cA$ and $G < \Aut(\cB)$ (see Section~\ref{section:conformal_nets_twisted_reps} for conformal nets).
In the same article, $G$-equivariant algebra structures on algebra objects in tensor categories (not necessarily C*) have been studied to understand spontaneous symmetry breaking under anyon condensation.

For our later applications, we will see this correspondence with a slightly explicit manner. Note that finiteness of $G$ is not needed in the following argument.
\end{remark}

We first see that one can construct a structure of $G$-equivariant Q-system from an extension of group action.

\begin{lemma} \label{lemma:G_equivariant_str_on_Q_system}
Under the assumptions as above, suppose that we have an action $\tilbeta$ of $G$ on $M$ which extends $\beta$, i.e. $\tilbeta_g\iota = \iota\beta_g$ for all $g\in G$.
For each $g\in G$, we define $z_g\in N$ by the unique element satisfying
\begin{equation}
\tilbeta_g(v) = z_g^* v.
\end{equation}
Then the following hold:
\begin{enumerate}
	\item $z_g\in \Hom(\beta_g\iotabar, \iotabar\tilbeta_g)$
	\item $z_g$ is a unitary isomorphisms of Q-systems from $g(\Theta)$ to $\Theta$.
	\item $(\theta, z)$ is a $G$-equivariant object in $\End(N)$ with strict action $\gamma$ of $G$ where $z$ denotes the family of unitaries $\{z_g\colon \gamma_g(\theta)\to \theta\}_{g\in G}$.
	\item $\tilbeta_g = \iotabar^{-1}\circ \Ad(z_g)\circ \beta_g\circ\iotabar$.
\end{enumerate}
In particular, $(\Theta, z)$ is a $G$-equivariant Q-system in $\End(N)$.

Moreover, the above $G$-equivariant Q-system structure only depends on $N\subset M$ and $\tilbeta$ in the following sense:
Let $\Theta'=(\theta'=\iotabar\iota, w', x'=\iotabar'(v'))$ be another Q-system associated with $N\subset M$ and $u\in \Hom(\iotabar, \iotabar')$ be a unitary as in Remark~\ref{remark:Q_system_another_choice}. We denote by $z' = \{z'_g\colon \gamma_g(\theta')\to \theta'\}_{g\in G}$ the corresponding family of unitaries as above. Then $u$ is a $G$-equivariant morphism from $(\theta, z)$ to $(\theta', z')$.
\end{lemma}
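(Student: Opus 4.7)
The plan is to exploit the uniqueness of a conjugate homomorphism up to unitary, as recorded in Eq.~\eqref{unitary_another_conjugate} and Remark~\ref{remark:Q_system_another_choice}, in order to construct $z_g$ and prove all four assertions in a uniform way. First I would observe that $\iotabar':=\beta_g\circ\iotabar\circ\tilbeta_g^{-1}\colon M\to N$ is another conjugate of $\iota$: the relation $\tilbeta_g\iota=\iota\beta_g$ gives $\iota\iotabar'=\tilbeta_g\circ\iota\iotabar\circ\tilbeta_g^{-1}$, and applying $\beta_g$, $\tilbeta_g$ to the conjugate equations~\eqref{conjugate_eq} shows that $(\beta_g(w),\tilbeta_g(v))$ is a standard solution for $(\iota,\iotabar')$. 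Hence there is a unique unitary $u_g\in\Hom(\iotabar,\iotabar')$ with $\beta_g(w)=u_g w$ and $\tilbeta_g(v)=\iota(u_g)v=u_g v$; setting $z_g:=u_g^*$ produces the defining identity $\tilbeta_g(v)=z_g^* v$, the unitarity of $z_g$, and the intertwining property (i), since $\Hom(\iotabar',\iotabar)=\Hom(\beta_g\iotabar\tilbeta_g^{-1},\iotabar)$ coincides with $\Hom(\beta_g\iotabar,\iotabar\tilbeta_g)$ after composing with $\tilbeta_g$. Statement (iv) is then a direct rewriting of (i).

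For (iii) I would apply $\tilbeta_g\tilbeta_h=\tilbeta_{gh}$ to $v$: the computation $\tilbeta_g(\tilbeta_h(v))=\tilbeta_g(z_h^* v)=\beta_g(z_h^*)z_g^* v$, combined with $\tilbeta_{gh}(v)=z_{gh}^* v$ and Remark~\ref{remark:subfactor_expansion_uniqueness}, yields $z_{gh}=z_g\beta_g(z_h)=z_g\gamma_g(z_h)$, which is precisely the diagram~\eqref{G_eq_obj_diagram} (note that (i) automatically makes $z_g$ a morphism $\gamma_g(\theta)\to\theta$, since $\gamma_g(\theta)=\beta_g\iotabar\tilbeta_g^{-1}\iota$). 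For (ii) the unit law $z_g\cdot\gamma_g(w)=w$ is immediate from $\beta_g(w)=z_g^* w$, while for the multiplication law $x z_g=(z_g\otimes z_g)\gamma_g(x)$ I would feed $m=v$ into the intertwining relation of (i) to obtain
\[
z_g\,\beta_g\iotabar(v)=\iotabar\tilbeta_g(v)\,z_g=\iotabar(z_g^* v)\,z_g=\theta(z_g^*)\cdot x\cdot z_g,
\]
and rearrange to $x z_g=\theta(z_g)\,z_g\,\beta_g\iotabar(v)=(z_g\otimes z_g)\gamma_g(x)$, using $z_g\otimes z_g=\theta(z_g)z_g$ and $\gamma_g(x)=\beta_g\iotabar(v)$.

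The final intrinsicness clause is handled by the same technique: applying $\tilbeta_g$ to $v'=uv$ gives $(z_g')^* u v=\beta_g(u) z_g^* v$, and a second appeal to Remark~\ref{remark:subfactor_expansion_uniqueness} yields $u z_g=z_g'\gamma_g(u)$, which is exactly the $G$-equivariance diagram~\eqref{G_eq_morphism_diagram}. I do not anticipate a serious obstacle: once the existence of $z_g$ is obtained from the uniqueness of the conjugate, everything else is a short manipulation. The only bookkeeping point is to track the embeddings carefully, using $\iota(n)=n$ in $M$ for $n\in N$ and $\iotabar\iota=\theta$, when passing between relations in $M$ and those in $N$.
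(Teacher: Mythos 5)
Your proposal is correct and follows essentially the same route as the paper: identify $\beta_g\iotabar\tilbeta_g^{-1}$ as a conjugate of $\iota$ with standard solution $(\beta_g(w),\tilbeta_g(v))$, invoke the uniqueness of conjugates to produce the unitary $z_g$, and then deduce (i)--(iv) and the intrinsicness clause by the manipulations you describe, appealing to Remark~\ref{remark:subfactor_expansion_uniqueness} exactly where the paper does. The only minor stylistic difference is in (ii): you verify the unit and multiplication laws for $z_g$ by hand (feeding $v$ into the intertwining relation), whereas the paper simply observes that the Q-system built from $\iotabar_g$ and $(w_g,v_g)$ equals $g(\Theta)$ and then cites Remark~\ref{remark:Q_system_another_choice}, which already asserts that such a unitary is automatically a Q-system isomorphism.
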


\begin{remark}
Some parts of (i) and (ii) have been essentially observed in~\cite[Lemma 6.1.1]{BisThesis} and its proof.
For the readers' convenience, we give the proof of all statements here.
\end{remark}

\begin{proof}
We first claim that $\iotabar_g := \beta_g\iotabar\tilbeta_g^{-1}\colon M\to N$ is a conjugate of $\iota$ and $(w_g, v_g) := (\beta_{g}(w), \tilbeta_g(v))$ is a standard solution for $\iota$ and $\iotabar_g$ for each $g\in G$.
Using the fact that $(w, v)$ is a standard solution of conjugate equation for $\iota$ and $\iotabar$, one can check this claim by direct computation. By Remark~\ref{remark:Q_system_another_choice}, there exists a unitary $u_g\in \Hom(\iotabar_g, \iotabar)$ such that $\iota(u_g)v_g = v$ and $u_g w_g = w$. 
Moreover, $u_g = u_g\otimes 1_\iota \in \Hom(\iotabar_g\iota, \theta)$ is a unitary isomorphism of Q-system from $(\iotabar_g\iota, w_g, \iotabar_g(v_g))$.
Since $u_g^*v = v_g\equiv \tilbeta_g(v)= z_g^*v$ and $u_g, z_g\in N$, we have $z_g = u_g$ by Remark~\ref{remark:subfactor_expansion_uniqueness}. In particular, $z_g$ is a unitary.

\smallskip

(i) Since $z_g = u_g\in \Hom(\beta_{g}\iotabar\tilbeta_g^{-1}, \iotabar)$, we get $z_g\in \Hom(\beta_g\iotabar, \iotabar\tilbeta_g)$ by composing $\tilbeta_g$ from the right.

\smallskip

(ii) We note that the Q-system $(\iotabar_g\iota, w_g, \iotabar_g(v_g))$ obtained from the conjugate $\iotabar_g\colon M\to N$ and the standard solution $(w_g, v_g)$ is equal to $g(\Theta)$. In fact, we have
\begin{align*}
\iotabar_g\iota &= \beta_g\iotabar\tilbeta_g^{-1}\iota = \beta_g\iotabar\iota\beta_{g}^{-1} = \gamma_g(\theta), \\
w_g &= \beta_g(w) = \gamma_g(w), \\
\iotabar_g(v_g) &= \beta_g\iotabar\tilbeta_g^{-1}(\tilbeta_g(v)) = \beta_g(\iotabar(v)) = \beta_g(x) = \gamma_g(x).
\end{align*}
Since $z_g=u_g=u_g\otimes 1_\iota$ is a unitary isomorphism of Q-system from $(\iotabar_g\iota, w_g, \iotabar_g(v_g))$ to $\Theta$, we obtain the statement.

\smallskip

(iii) It suffices to show $z_{gh} = z_g\gamma_g(z_h)\equiv z_g\beta_{g}(z_h)$ for every $g, h\in G$. To see this, we have
\begin{align*}
z_{gh}^*v = \tilbeta_{gh}(v) = \tilbeta_g(z_h^*v) = \beta_g(z_h^*)z_g^*v.
\end{align*}
Thus we get $z_{gh} = z_g\beta_{g}(z_h)$ by Remark~\ref{remark:subfactor_expansion_uniqueness}.

\smallskip

(iv) For every $m\in M$, we have
\begin{equation*}
    \Ad(z_g)\circ \beta_g\circ\iotabar(m) = \iotabar\circ \tilbeta_g (m),
\end{equation*}
where we used the intertwining property (i). (This computation also implies the well-definedness of the formula.) Applying $\iotabar^{-1}$ to both sides, we get $\tilbeta_g(m) = \iotabar^{-1}\circ \Ad(z_g)\circ \beta_g\circ \iotabar(m)$.

\smallskip

Finally, we prove the last statement. Let $g\in G$. We compute $\tilbeta_g(v')$ in two ways:
\begin{align*}
\tilbeta_g(v') &= {z'_g}^* v' = {z'_g}^* uv, \\
\tilbeta_g(v') &= \tilbeta_g(uv) = \beta_g(u)z_g^* v.
\end{align*}
Thus we get ${z'_g}^*u = \beta_g(u)z_g^* = \gamma_g(u)z_g^*$ and hence $z'_g \gamma_g(u) = u z_g$, which implies the commutativity of the diagram~\eqref{G_eq_morphism_diagram}. This proves the statement.
\end{proof}

We then see the converse direction.

\begin{lemma}
Under the assumptions as above, suppose that we have a family of unitaries $z = \{z_g\colon \gamma_g(\theta)\to \theta \}$ which gives a structure of a $G$-equivariant Q-system on $\Theta$. Then the map $G\ni g\mapsto \tilbeta_g\in \Aut(M)$ defined by
\begin{equation*}
\tilbeta_g = \iotabar^{-1}\circ \Ad(z_g)\circ \beta_g\circ \iotabar
\end{equation*}
gives a well-defined group action of $G$ on $M$ which extends the action $\beta$ on $N$.
\end{lemma}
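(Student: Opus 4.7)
The plan is to use the decomposition $M = Nv$ (Eq.~\eqref{subfactor_basis_expansion}) to give an explicit formula for $\tilbeta_g$, then verify well-definedness, the extension property, and the group homomorphism property.

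First, I would address well-definedness. Since $\iotabar\colon M\to N$ is an injective $*$-homomorphism, $\iotabar^{-1}$ makes sense on $\iotabar(M)$, so the real content is that $\Ad(z_g)\circ\beta_g$ preserves the subalgebra $\iotabar(M)\subset N$. Using $\iotabar(nv) = \theta(n)x$, together with (a) the trivial identities $\beta_g(x) = \gamma_g(x)$ and $\beta_g(\theta(n)) = \gamma_g(\theta)(\beta_g(n))$, (b) the intertwining property $z_g\gamma_g(\theta)(\cdot) = \theta(\cdot)z_g$, and (c) the unitary-isomorphism-of-Q-systems identity $x z_g = (z_g\otimes z_g)\gamma_g(x) = \theta(z_g)z_g\gamma_g(x)$, a direct computation (which I would present but not belabor) gives $z_g\gamma_g(x)z_g^* = \theta(z_g^*)x$ and hence the clean explicit formula
\[
\Ad(z_g)\beta_g(\iotabar(nv)) \;=\; \theta(\beta_g(n))\theta(z_g^*)x \;=\; \iotabar\bigl(\beta_g(n)z_g^* v\bigr),
\]
so that $\Ad(z_g)\beta_g(\iotabar(M))\subset \iotabar(M)$ and $\tilbeta_g(nv) = \beta_g(n)z_g^* v$. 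The fact that this formula is unambiguous uses Remark~\ref{remark:subfactor_expansion_uniqueness}, which says that the coefficient $n\in N$ in $m=nv$ is uniquely determined.

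Next, $\tilbeta_g$ is a $*$-homomorphism as a composition of $*$-homomorphisms; it is injective since $\iotabar$, $\Ad(z_g)$ and $\beta_g$ all are, and it is surjective because, given any $m'v\in M$, the element $n = \beta_g^{-1}(m'z_g)$ satisfies $\tilbeta_g(nv) = m'v$. Thus $\tilbeta_g \in \Aut(M)$. To see that $\tilbeta_g$ extends $\beta_g$, restrict the formula to $N \cong \iota(N)$: for $n\in N$ one has $\iotabar(n) = \theta(n)$, and the intertwining property of $z_g$ gives $\Ad(z_g)\beta_g(\theta(n)) = \theta(\beta_g(n))$, so $\tilbeta_g(n) = \iotabar^{-1}(\theta(\beta_g(n))) = \beta_g(n)$.

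Finally, the group law follows from the formal computation
\[
\tilbeta_g\tilbeta_h \;=\; \iotabar^{-1}\circ \Ad(z_g)\circ\beta_g\circ\Ad(z_h)\circ\beta_h\circ\iotabar \;=\; \iotabar^{-1}\circ \Ad\bigl(z_g\beta_g(z_h)\bigr)\circ\beta_{gh}\circ\iotabar,
\]
which coincides with $\tilbeta_{gh}$ exactly when $z_{gh} = z_g\gamma_g(z_h)$; this is precisely the cocycle condition encoded in the commutativity of diagram~\eqref{G_eq_obj_diagram}, built into the hypothesis that $(\theta,z)$ is a $G$-equivariant object. The main obstacle is really only the well-definedness step: all other properties are formal once the explicit formula $\tilbeta_g(nv) = \beta_g(n)z_g^* v$ is established, and that formula is exactly where both the intertwining property and the Q-system multiplication compatibility of $z_g$ must be used in tandem.
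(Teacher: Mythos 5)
Your proof is correct, and it supplies the content that the paper itself delegates to an external reference: the paper's own proof handles only the group-homomorphism step explicitly (and your computation $\tilbeta_g\tilbeta_h = \iotabar^{-1}\circ\Ad(z_g\beta_g(z_h))\circ\beta_{gh}\circ\iotabar = \tilbeta_{gh}$ is identical to it), while for well-definedness and the extension property the paper simply cites Proposition~6.1.3 of Bischoff's thesis. Your derivation of the explicit formula $\tilbeta_g(nv)=\beta_g(n)z_g^*v$ from the identity $z_g\gamma_g(x)z_g^*=\theta(z_g^*)x$ — itself a direct consequence of $xz_g=\theta(z_g)z_g\gamma_g(x)$ — correctly establishes both $\Ad(z_g)\beta_g(\iotabar(M))\subset\iotabar(M)$ and, via $n=\beta_g^{-1}(m'z_g)$, the reverse inclusion needed for surjectivity; the verification that $\tilbeta_g|_N=\beta_g$ via $\Ad(z_g)\beta_g(\theta(n))=\theta(\beta_g(n))$ is likewise correct. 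What your approach buys is a self-contained proof relying only on the decomposition $M=Nv$, Remark~\ref{remark:subfactor_expansion_uniqueness}, and the two defining properties of $z_g$ (intertwiner and unitary isomorphism of Q-systems), at the cost of a slightly longer argument; the paper's approach is shorter but requires the reader to consult the cited thesis.
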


\begin{proof}
The well-definedness of $\tilbeta_g$ and the claim that $\tilbeta_g$ is an extension of $\beta_g$ for each $g\in G$ have been shown in~\cite[Proposition 6.1.3]{BisThesis} under a similar setting.
To see that $\tilbeta$ is a group homomorphism, for $g, h\in G$ we have
\begin{align*}
\tilbeta_g\circ \tilbeta_h &= \iotabar^{-1}\circ \Ad(z_g)\circ \beta_g\circ \Ad(z_h)\circ \beta_h\circ \iotabar \\
&= \iotabar^{-1}\circ \Ad(z_g\beta_g(z_h))\circ \beta_g\circ\beta_h\circ \iotabar \\
&= \iotabar^{-1}\circ \Ad(z_{gh})\circ \beta_{gh} \circ\iotabar = \tilbeta_{gh},
\end{align*}
where we used $z_{gh} = z_g\gamma_g(z_h) = z_g\beta_g(z_h)$.
\end{proof}

\subsection{Braided $G$-crossed categories} \label{section:braided_G_crossed_category}

Let $G$ be a group. We recall the notion of braided $G$-crossed categories and introduce its ''opposite" braiding. For simplicity, we only give the definition with strictness assumption.

\begin{definition} \label{def_braided_G_crossed_category}
A \emph{strict $G$-crossed category} is a strict monoidal category $\cF$ equipped with the following structure:
\begin{itemize}
	\item There is a full subcategory $\cF_{\mathrm{hom}}\subset \cF$ of homogeneous objects.
	\item $\cF_{\mathrm{hom}}$ is $G$-graded, in the sense that there is a map $\partial\colon \cF_{\mathrm{hom}}\to G$ such that $\partial(X\otimes Y) = \partial X \cdot \partial Y$ and $\partial X = \partial Y$ if $X\cong Y$. We define the full subcategories $\cF_g$ by $\cF_g = \partial^{-1}(g)$.
	\item There is a strict $G$-action $\gamma$ on $\cF$ such that $\gamma_g(\cF_h) \subset\cF_{ghg^{-1}}$.
\end{itemize}
If $\cF$ is additive, we require that every object in $\cF$ is a finite direct sum of objects in $\cF_{\mathrm{hom}}$, i.e. $\cF = \bigoplus_{g\in G} \cF_g$. The $G$-grading is said to be \emph{full} if the image of $\partial$ is equal to $G$. 

A \emph{$G$-crossed braiding} (or simply \emph{braiding}) on a strict $G$-crossed category $\cF$ is a family of isomorphisms
\begin{equation*}
c(X, Y): X\otimes Y \to \gamma_g(Y)\otimes X,
\end{equation*}
defined for all $g\in G$, $X\in \cF_g$ and $Y\in \cF$ satisfying following identities:

\begin{enumerate}
	\item (Naturality) For all $g\in G$, $X, X'\in\cF_g$, $Y, Y'\in\cF$, $s\in\Hom(X, X')$, and $t\in\Hom(Y, Y')$ we have
	\begin{align} \label{naturality_gcbraiding}
	c(X', Y')\circ (s\otimes t) = (\gamma_g(t)\otimes s)\circ c(X, Y).
	\end{align}
	\item (Covariance) For all $g\in G$, $X\in\cF_{\text{hom}}$, and $Y\in\cF$ we have
	\begin{align} \label{covariance_gcbraiding}
	\gamma_g(c(X, Y)) = c(\gamma_g(X), \gamma_g(Y)).
	\end{align}
	\item For all $g, h\in G$, $X\in \cF_g$, $Y\in \cF_h$, and $Z\in \cF$ we have
	\begin{subequations} \label{braid_rel_gcbraiding}
	\begin{align}
	c_{X\otimes Y, Z} &= (c(X, \gamma_h(Z))\otimes 1_Y)\circ (1_X\otimes c(Y, Z)), \label{braid_rel_gcbraiding1} \\
	c_{X, Y\otimes Z} &= (1_{\gamma_g(Y)}\otimes c(X, Z))\circ (c(X, Y)\otimes 1_Z). \label{braid_rel_gcbraiding2}
	\end{align}
	\end{subequations}
\end{enumerate}
A strict $G$-crossed category equipped with a $G$-crossed braiding is called a \emph{strict braided $G$-crossed category}.
\end{definition}

Let $\cF$ be a braided $G$-crossed category. Using Eq.~\eqref{braid_rel_gcbraiding}, one can show a version of the Yang-Baxter equation:
For all $g, h\in G$, $X\in \cF_g$, $Y\in \cF_h$, and $Z\in \cF_{\mathrm{hom}}$, we have
\begin{multline} \label{eq:YBE_for_G_crossed_braiding}
(c(\gamma_g(Y), \gamma_g(Z))\otimes 1_X)\circ(1_{\gamma_g(Y)}\otimes c(X, Z))\circ(c(X, Y)\otimes 1_Z) = \\
(1_{\gamma_{gh}(Z)}\otimes c(X, Y))\circ (c(X, \gamma_h(Z))\otimes 1_Y)\circ (1_X\otimes c(Y, Z)).
\end{multline}
We note that the identity component $\cF_e$ is closed under the tensor product and restriction of a $G$-crossed braiding on $\cF_e$ gives an ordinary braiding. We also note that the strict action $\gamma$ of $G$ preserves $\cF_e$.
We refer the reader to the book~\cite{TuBook} for the basics of braided $G$-crossed categories.

For the later use, we introduce the ''opposite" version of $G$-crossed braiding. We define $c^-(X, Y)$ by
\begin{equation}
c^-(X, Y) := c(Y, \gamma_{h^{-1}}(X))^{-1}\colon X\otimes Y\to Y\otimes \gamma_{h^{-1}}(X)
\end{equation}
for all $X\in\cF_{\text{hom}}$ and $Y\in\cF_h$. Using the identities in the definition of $G$-crossed braiding, one can check that the family $\{c^-(X, Y)\}$ satisfies the following identities:
\begin{enumerate}
	\item (Naturality) For all $h\in G$, $X, X'\in\cF_{\text{hom}}$, $Y, Y'\in\cF_h$, $s\in\Hom(X, X')$, and $t\in\Hom(Y, Y')$ we have
	\begin{align} \label{naturality_gcopbraiding}
	c^-(X', Y')\circ (s\otimes t) = (t\otimes \gamma_{h^{-1}}(s))\circ c^-(X, Y).
	\end{align}
	\item (Covariance) For all $g\in G$, and $X, Y\in\cF_{\text{hom}}$ we have
	\begin{align} \label{covariance_gcopbraiding}
	\gamma_g(c^-(X, Y)) = c^-(\gamma_g(X), \gamma_g(Y)).
	\end{align}
	\item For all $h, k\in G$, $X\in \cF_{\mathrm{hom}}$, $Y\in \cF_h$, and $Z\in \cF_k$ we have
	\begin{subequations} \label{braid_rel_gcopbraiding}
		\begin{align}
		c^-(X\otimes Y, Z) &= (c^-(X, Z)\otimes 1_{\gamma_{k^{-1}}(Y)})\circ (1_X\otimes c^-(Y, Z)), \label{braid_rel_gcopbraiding1} \\
		c^-(X, Y\otimes Z) &= (1_Y\otimes c^-(\gamma_{h^{-1}}(X), Z))\circ (c^-(X, Y)\otimes 1_Z). \label{braid_rel_gcopbraiding2}
		\end{align}
	\end{subequations}
\end{enumerate}
Note that $c^-$ is not a $G$-crossed braiding. We will write $c^+(X, Y)\equiv c(X, Y)$ for the original $G$-crossed braiding.

\section{Conformal nets and twisted representations} \label{section:conformal_nets_twisted_reps}

\subsection{Conformal nets}

Let $S^1\subset \CC$ be the unit circle, which is identified with the one-point compactification of the real line $\RR\cup \{\infty\}$ by the Cayley transform $\RR\ni x \mapsto z=\frac{i - x}{i+x}$ where $\infty$ corresponds to $-1\in S^1$. We denote the M\"obius group by $\Mob$. This group is isomorphic to $\PSU(1, 1)$, which acts naturally on $S^1\subset \CC$.

We denote by $\cI$ the set of \emph{proper intervals} (or simply \emph{intervals}) of $S^1$, i.e. open, connected, non-dense, and non-empty subsets of $S^1$. For any subset $E\subset S^1$, we denote by $E'$ the interior of $S^1\setminus E$.

\begin{definition}
A \emph{local M\"obius covariant net} on $S^1$ is a quadruple $(\cA, U, \cH, \Omega)$ (simply denoted by $\cA$), where $\cH$ is a Hilbert space, $\Omega\in\cH$ is a unit vector corresponding to the vacuum, $U$ is a strongly continuous unitary representation of $\mathsf{M\ddot{o}b}$, and $\cA$ is a map
\[
\cI\ni I \mapsto \cA(I) \subset \cB(\cH)
\]
from the set of proper intervals to von Neumann algebras on $\cH$, with the following properties:
\begin{enumerate}
    \item (Isotony) For any pair of intervals $I_1, I_2\in \cI$, if $I_1 \subset I_2$, then 
    \[
        \cA(I_1) \subset \cA(I_2).
    \]
    \item (Locality) For any pair of intervals $I_1, I_2\in \cI$,if $I_1 \cap I_2 = \emptyset$, then 
    \[
        [\cA(I_1), \cA(I_2)] = \{0\},
    \]
    where brackets denote the commutator.
    \item (M\"obius covariance) For any $g\in \Mob$ and $I\in \cI$, we have 
    \[
        U(g)\cA(I)U(g)^* = \cA(gI).
    \]
	\item (Positive energy condition) The generator of the one-parameter rotation subgroup of $U$ has a positive spectrum.
	\item (Vacuum) $\Omega$ is the unique $U$-invariant vector (up to phase) and cyclic for the von Neumann algebra $\bigvee_{I\in\cI} \cA(I)$.
\end{enumerate}
If $\cA$ also satisfies the following diffeomorphism covariance in addition to the above axioms, it is called a \emph{conformal net}:
\begin{enumerate}
\setcounter{enumi}{5}
\item (Diffeomorphism covariance) The representation $U$ of $\Mob$ extends to a projective unitary representation of $\Diff^+(S^1)$ such that for all $I\in \cI$
\begin{align*}
U(g)\cA(I)U(g)^* &= \cA(gI), \quad \text{for all} \ g\in \Diff^+(S^1), \\
U(g)xU(g) &= x, \quad \text{for all} \ x\in \cA(I), \ g\in \Diff^+(I'),
\end{align*}
where $\Diff^+(S^1)$ denotes the group of orientation preserving diffeomorphism of $S^1$ and $\Diff^+(I')$ the subgroup of $\Diff^+(S^1)$ consisting of those $g$ such that $g(z) = z$ for all $z\in I$.
\end{enumerate}
\end{definition}

Let $\cA$ be a local M\"obius covariant net. We collect some properties of $\cA$ which automatically follow from the above axioms:
\begin{itemize}
    \item (Irreducibility)~\cite{GL96} The von Neumann algebra $\bigvee_{I\in\cI} \cA(I)$ is equal to $\cB(\cH)$.
	\item (Reeh-Schlieder property)~\cite{FJ96} The vacuum vector $\Omega$ is cyclic and separating for each local algebra $\cA(I)$, $I\in \cI$.
	\item (Bisognano-Wichmann property)~\cite{BGL93,GF93} For every $I\in \cI$, there exists a one-parameter subgroup $\{\Lambda_I(t)\}\subset \Mob$ which fixes the boundary points of $I$ such that
	\begin{equation*}
        \Delta_I^{it} = U(\Lambda_I(-2\pi t))
    \end{equation*}
    where $\Delta_I$ denotes the modular operator associated to the pair $(\cA(I), \Omega)$. Moreover, the modular conjugation $J_I$ associated to the pair $(\cA(I), \Omega)$ has also a geometric meaning.
	\item (Haag duality)~\cite{BGL93,GF93} $\cA(I') = \cA(I)'$ for any $I\in \cI$.
	\item (Additivity)~\cite{FJ96} If a family of intervals $\{I_i \in \cI\}$ and an interval $I\in \cI$ satisfies $I\subset \bigcup_i I_i \in \cI$, then we have $\cA(I)\subset \bigvee_i \cA(I_i)$.
	\item  (Factoriality) (see e.g. \cite{GF93}) Each local algebra $\cA(I)$ is a type III$_1$ factor.
\end{itemize}
We then recall some extra properties of a local M\"obius covariant net.
The net $\cA$ is said to be \emph{strongly additive} if we have $\cA(I_1)\vee\cA(I_2) = \cA(I)$ for any two adjacent intervals $I_1, I_2\in\cI$ obtained by removing a single point from an interval $I\in\cI$. 
We say that $\cA$ has the \emph{split property} if $\cA(I_1)\vee \cA(I_2)$ is naturally isomorphic to $\cA(I_1)\otimes \cA(I_2)$ for any pair of intervals $I_1, I_2\in \cI$ with disjoint closures.

\begin{definition}[\cite{KLM01}]
A local M\"obius covariant net $\cA$ is called \emph{completely rational} if it is strongly additive, fulfills the split property and its $\mu$-index $\mu$$_\cA$ is finite, where
\[
\mu_\cA = [(\cA(I_2)\vee \cA(I_4))': \cA(I_1)\vee \cA(I_3)]
\]
for $I_1, I_3\in \cI$ two intervals with $\overline{I_1}\cap\overline{I_3} = \emptyset$ and $I_2, I_4\in\cI$ the two components of $(I_1\cup I_3)'$ (which does not depend on the choice of intervals).
\end{definition}

\subsection{Subnets}

Let $(\cA, U, \cH, \Omega)$ be a local M\"obius covariant net. A \emph{M\"obius covariant subnet} $\cB$ of $\cA$ is a map
\[
\cI \ni I \mapsto \cB(I)\subset \cB(\cH)
\]
which associates to each $I\in \cI$ a von Neumann subalgebra $\cB(I)$ of $\cA(I)$, satisfying the following
\begin{alignat*}{2}
\cB(I_1) &\subset \cB(I_2) \qquad &&\text{if} \ I_1 \subset I_2, \\
U(g)\cA(I)U(g)^* &= \cA(gI) &&\text{for all} \ g\in \Mob, \ I\in \cI.
\end{alignat*}
We write $\cB\subset \cA$ to indicate that $\cB$ is a M\"obius covariant subnet of $\cA$.

Let $\cH_\cB$ be the closure of $\bigvee_I \cB(I)\Omega$. Then $\cH_\cB$ is $U$-invariant and the restriction of $\cB$ to $\cH_\cB$ gives a local M\"obius covariant net on $\cH_\cB$. The restriction map $\cB(I)\ni b\mapsto b|_{\cH_\cB}$ is injective since $\Omega$ is separating for $\cB(I)$, so we will often identify $\cB$ with its restriction to $\cH_\cB$.

By the M\"obius covariance, the index $[\cA(I):\cB(I)]$ does not depend on the interval $I\in \cI$ and will be denoted by $[\cA:\cB]$. By~\cite[Corollary 2.6]{DLR01}, $\cB(I)'\cap \cA(I) = \CC$ for all $I\in \cI$ if $[\cA:\cB] < \infty$.

\subsection{Automorphism groups and orbifolds}

Let $\cA$ be a local M\"obius covariant net. The \emph{automorphism group} of $\cA$ is defined by
\[
\Aut(\cA) = \{g\in\cU(\cH) \mid g\cA(I)g^{-1} = \cA(I) \ \textrm{for all} \ I\in\cI \ \textrm{and} \ g\Omega = \Omega \},
\]
where $\cU(\cH)$ denotes the group of unitaries on $\cH$. We consider $\Aut(\cA)$ as a topological group with the topology induced by the strong operator topology of $\cB(\cH)$.
By definition, every $g\in \Aut(\cA)$ gives an automorphism $\Ad (g)|_{\cA(I)}$ of each local algebra $\cA(I)$. It is easy to see that every element of $\Aut(\cA)$ commutes with the representation $U$ of $\Mob$ by using the Bisognano-Wichmann property. If $\Ad (g)|_{\cA(I)} = \id_{\cA(I)}$ for some interval $I\in \cI$, then we have $g=e\equiv 1_{\cH}$ by the Reeh-Schlieder property.

If $\cB\subset \cA$ is a M\"obius covariant subnet, we define the subgroup $\Aut(\cA, \cB) < \Aut(\cA)$ by
\begin{equation*}
\Aut(\cA, \cB) = \{g\in \Aut(\cA) : g\cB(I)g^{-1} = \cB(I) \ \text{for all} \ I\in \cI \}.
\end{equation*}
Note that every $g\in \Aut(\cA, \cB)$ preserves $\cH_\cB = \bigvee_I \cB(I)\Omega$ and its restriction on $\cH_\cB$ gives an element of $\Aut(\cB)$.

Let $G < \Aut(\cA)$ be a subgroup. Then the map
\[
\cI\ni I \mapsto \cA^G(I)\subset \cA(I)
\]
gives a M\"obius covariant subnet where $\cA^G(I) = \{x\in \cA(I) \mid gxg^{-1} = x \ \text{for all} \ g\in G \}$. We will denote this subnet by $\cA^G$. If $G$ is finite, $\cA^G$ is called the \emph{orbifold} of $\cA$ by $G$. If $\cA$ is completely rational and $G$ is finite, then $\cA^G$ is also completely rational and $\mu_{\cA^G} = |G|^2\mu_\cA$~\cite[Theorem 2.6]{Xu00}.

\subsection{Twisted representations}

We will review the notion of $g$-localized endomorphisms (or $g$-twisted representations) and the braided $G$-crossed category $\GLoc \cA$ introduced by M\"uger~\cite{Mueg05}. We also give proofs of some statements needed in the later analysis.

To consider the notion of $g$-localized endomorphisms, we will work on the real line $\RR\cong S^1\setminus \{-1\}$ instead of the whole unit circle $S^1$. Let us denote $\cJ$ the set of \emph{proper intervals}(or simply \emph{intervals}) of $\RR$, i.e. non-empty bounded connected open subsets of $\RR$. For $I, J\in\cJ$, we write $I<J$ (resp. $I>J$) if $I\subset (-\infty, \inf J)$ (resp. $I\subset (\sup J, +\infty)$). We write $I^\perp = \RR\setminus \overline{I}$.
We will view $\cJ$ as a subset of $\cI$ under the identification $\RR\cong S^1\setminus \{-1\}$.

Let $\cA$ be a strongly additive local M\"obius covariant net on $S^1$. By the strongly additive assumption, $\cA$ satisfies \emph{the Haag duality on $\RR$}: We have 
\begin{equation} \label{eq:Haag_duality_on_R}
\cA(I) = \cA(I^\perp)' =\left(\bigvee_{J\in \cJ, J\subset I^\perp} \cA(J)\right)'
\end{equation}
for any $I\in \cJ$.
We denote by $\cA_\infty$ the $*$-algebra defined by
\[
    \cA_\infty := \bigcup_{I\in\cJ} \cA(I) \ .
\]
Note that we do not take any closures here. In the following, we will view $\End(\cA_\infty)$ as a $\CC$-linear monoidal category as explained in Remark~\ref{remark:non_closed_alg_tensor_cat_of_endomorphisms}.

Let $g\in \Aut(\cA)$. We denote by $\beta_g$ the corresponding automorphism on each local algebra $\cA(I)$. An endomorphism $\rho\in\End(\cA_\infty)$ is called \emph{$g$-localized in $I$} if it satisfies the following:
\begin{alignat*}{2}
\rho(x) &= \ \ x & \quad & \textrm{for all} \ J < I \ \textrm{and} \  x\in\cA(J), \\
\rho(x) &= \beta_g(x) & & \textrm{for all} \ J > I \ \textrm{and} \ x\in\cA(J).
\end{alignat*}
If $\rho$ is $g$-localized in $I$ and $J\supset I$, then $\rho$ is also $g$-localized in $J$. An endomorphism $\rho\in\End(\cA_\infty)$ is called \emph{$g$-localized} if it is $g$-localized in some interval. A $g$-localized endomorphism $\rho$ is called \emph{transportable} if for every $J\in \cJ$ there exists a unitary $U\in \cA_\infty$ such that $\tilde{\rho} = \Ad (U)\circ\rho$ is $g$-localized in $J\in \cJ$. 
For simplicity of notations, we shall denote by $\Delta_{\cA}^g(I)$ (resp. $\Delta_{\cA}^g$) the full subcategory of $\End(\cA_\infty)$ consisting of those $\rho$ that are $g$-localized in $I$ (resp. $g$-localized) and transportable.

\begin{remark}
\begin{enumerate}
\item In~\cite{Mueg05}, the notion of a $g$-localized endomorphism is defined for a net of factors on $\RR$ with slightly general assumptions. In this article, we only consider a case that the underlying net is obtained from a strongly additive local M\"obius covariant net.
\item If $g = e \equiv 1_\cH$, the category $\Delta_{\cA}^e$ is the same as the category of DHR endomorphisms of $\cA$~\cite{DHRI71,DHRII74}. They are equivalent to the category of representations of $\cA$ (cf.~\cite[Theorem 2.31]{Mueg05}) and so an element of $\Delta_{\cA}^e$ corresponds to a representation of $\cA$.
For a general element $g\in \Aut(\cA)$, a $g$-localized endomorphism is also called a \emph{$g$-twisted representation}.
\end{enumerate}
\end{remark}

If $\rho\in \Delta_{\cA}^g(I)$, then the Haag duality on $\RR$ implies $\rho(\cA(I))\subset \cA(I)$~\cite[Lemma 2.12]{Mueg05} and one can consider $\rho$ as an element of $\End(\cA(I))$. In general, if $\rho, \sigma\in \End(\cA_\infty)$ preserve $\cA(I)$ for some $I\in \cJ$, one can consider $\rho$ and $\sigma$ as elements of $\End(\cA(I))$ and consider the intertwiner space between them in $\cA(I)$. In such a situation, we will write
\begin{equation*}
\Hom_{\cA(I)}(\rho, \sigma) :=  \{t\in \cA(I) \mid t\rho(x) = \sigma(x)t \ \text{for all} \ x\in \cA(I)\}.
\end{equation*}
An element of $\Hom_{\cA(I)}(\rho, \sigma)$ is called a \emph{local intertwiner} from $\rho$ to $\sigma$. When we consider the intertwiner space in $\cA_\infty$, we also write
\begin{equation*}
\Hom_{\cA_\infty}(\rho, \sigma) := \Hom(\rho, \sigma) \equiv \{t\in \cA_\infty \mid t\rho(x) = \sigma(x)t \ \text{for all} \ x\in \cA_\infty\}
\end{equation*}
if we want to emphasize the underlying algebra.
An element of $\Hom_{\cA_\infty}(\rho, \sigma)$ is called a \emph{global intertwiner}. In general, the spaces of local and global intertwiners do not coincides.

\begin{lemma} \label{lemma_intertwiners_GLoc}
Let $g, h\in \Aut(\cA)$. For $\rho\in \Delta_{\cA}^g(I)$ and $\sigma\in \Delta_{\cA}^h(I)$, the following hold:
\begin{enumerate}
\item If $g=h$, then we have
    \begin{equation*}
    \Hom_{\cA_\infty}(\rho, \sigma) = \Hom_{\cA(I)}(\rho, \sigma)\subset \cA(I).
    \end{equation*}
\item If $g\ne h$, then there is no unitary in $\Hom_{\cA_\infty}(\rho, \sigma)$.
\item If the closed subgroup generated by $g$ and $h$ is compact, then we have 
    \begin{equation*}
        \Hom_{\cA_\infty}(\rho, \sigma) \subset \cA(I).
    \end{equation*}
\end{enumerate}

\end{lemma}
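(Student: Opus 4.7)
My strategy uses Haag duality on $\RR$ from Eq.~\eqref{eq:Haag_duality_on_R} together with locality as the common tools. To show an intertwiner $t$ lies in $\cA(I)$, one checks that $t$ commutes with $\cA(J)$ for every $J\in\cJ$ with $J\subset I^\perp$, and I handle the two cases $J<I$ and $J>I$ separately. For the $g\neq h$ cases I also exploit the fact, recorded immediately after the definition of $\Aut(\cA)$, that every element of $\Aut(\cA)$ acting trivially on some local algebra must equal the identity.

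For (i), the forward inclusion $\Hom_{\cA_\infty}(\rho,\sigma)\subset\cA(I)$ reduces via Haag duality on $\RR$ to checking $[t,y]=0$ for $y\in\cA(J)$, $J\subset I^\perp$. If $J<I$, both $\rho$ and $\sigma$ act as the identity on $\cA(J)$, so the intertwining gives $ty=yt$ directly. If $J>I$, because $g=h$ both $\rho,\sigma$ act as $\beta_g$ on $\cA(J)$, and since $\beta_g$ is an automorphism of $\cA(J)$, the intertwining gives $t\beta_g(x)=\beta_g(x)t$ for all $x$, hence $[t,\cA(J)]=0$. For the reverse inclusion, given $t\in\Hom_{\cA(I)}(\rho,\sigma)$, I verify the intertwining relation for every $x\in\cA(J)$, $J\in\cJ$: by strong additivity $\cA(J)$ is generated (as a von Neumann algebra) by subalgebras contained in $\cA(I)$, or in $\cA(J')$ with $J'<I$, or in $\cA(J')$ with $J'>I$, and on each such piece the relation holds either by hypothesis (the first case) or by locality combined with the way $\rho,\sigma$ act (the other two cases). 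For (ii), suppose $u\in\Hom_{\cA_\infty}(\rho,\sigma)$ is unitary with $u\in\cA(K)$, and pick $J\in\cJ$ with $J>I$ and $J\cap K=\emptyset$. Locality gives $u\beta_g(x)=\beta_g(x)u$ and $u\beta_h(x)=\beta_h(x)u$ for $x\in\cA(J)$, while the intertwining gives $u\beta_g(x)=\beta_h(x)u$. Combining these yields $\beta_g(x)u=\beta_h(x)u$, and cancelling the unitary $u$ from the right gives $\beta_g(x)=\beta_h(x)$ on $\cA(J)$; the cited fact then forces $h^{-1}g=e$, contradicting $g\neq h$.

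For (iii), the cases $J<I$ and $J>I$ with $J\cap K=\emptyset$ go through as in (i) and by locality, respectively; the nontrivial case is $J>I$ with $J\cap K\neq\emptyset$, where the intertwining only yields the twisted relation $ty=\beta_k(y)t$ for $y\in\cA(J)$, with $k:=hg^{-1}$. The first key step is to take adjoints: $t^*\in\Hom_{\cA_\infty}(\sigma,\rho)$ satisfies the opposite twisted relation $t^*y=\beta_{k^{-1}}(y)t^*$ on $\cA(J)$, $J>I$. Composing the two relations gives $t^*t\cdot y=y\cdot t^*t$ and $tt^*\cdot y=y\cdot tt^*$ for every $y\in\cA(J)$ with $J>I$; combined with the $J<I$ case and Haag duality, $t^*t$ and $tt^*$ both lie in $\cA(I)$, so $|t|:=(t^*t)^{1/2}\in\cA(I)$. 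Writing the polar decomposition $t=u|t|$ inside $\cA(K)$, the partial isometry $u$ has support and range projections in $\cA(I)$, and using $[|t|,y]=0$ (by locality) it inherits the twisted intertwining $uy=\beta_k(y)u$ for $y\in\cA(J)$, $J>I$. The main obstacle is the final step of concluding $u\in\cA(I)$: if $k=e$ then $g=h$ and we are back in the situation of (i), while if $k\neq e$ I plan to exploit the compactness of $H:=\overline{\langle g,h\rangle}$ to rule out any nonzero partial isometry satisfying the twisted relation (which would spatially implement the nontrivial automorphism $\beta_k$ of the type III$_1$ factor $\cA(J)$ by an element of $\cA(K)$), forcing $u=0$ and hence $t=0\in\cA(I)$. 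Formalizing this concluding compactness argument is, I expect, the most delicate point of the proof.
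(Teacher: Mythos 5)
Your treatment of parts (i) and (ii) is correct and self-contained (for comparison, the paper cites Lemma 2.13 and Remark 2.7 of M\"uger for the first inclusion of (i) and all of (ii), rather than arguing directly). The reverse inclusion in (i) via strong additivity is in the spirit of the argument the paper borrows from B\"ockenhauer--Evans, and your direct ruling-out of unitaries in (ii) by exploiting that a locally trivial element of $\Aut(\cA)$ must be the identity is sound.

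Part (iii) is where there is a genuine gap. Your reductions are fine as far as they go: the relation $ty = \beta_k(y)t$ with $k = hg^{-1}$ for $y\in\cA(J)$, $J>I$; the dual relation for $t^*$; the deduction that $t^*t$ and $tt^*$ commute with all of $\cA(I^\perp)$ and hence lie in $\cA(I)$ by Haag duality on $\RR$; the polar decomposition $t=u|t|$ inside $\cA(K)$ with $u^*u, uu^*\in\cA(I)$; and the transfer of the twisted relation from $t$ to $u$. But then the concluding step --- ``exploit the compactness of $\overline{\langle g,h\rangle}$ to rule out any nonzero partial isometry satisfying the twisted relation'' --- is asserted, not proved, and you flag it yourself as the delicate point. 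As stated this is not an argument: a partial isometry in $\cA(K)$ intertwining the restriction of the $\cA(J)$-action on $q\cH$ with the $\beta_k$-twisted action on $p\cH$ is not obviously impossible just because $\overline{\langle g,h\rangle}$ is compact, and in any case compactness never actually enters your reasoning. The hypothesis of compactness is essential to the statement (without it the conclusion can fail), so any proof must genuinely use it, and your write-up does not.

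The paper resolves (iii) by a different and much shorter route that you did not find, and it is worth recording because it explains exactly where compactness is used. Starting from $t\in\Hom_{\cA_\infty}(\rho,\sigma)$, one uses commutation with $\cA(I_-)$ for $I_-<I$ together with Haag duality on $\RR$ to place $t\in\cA(J)$ for some $J$ with $\inf J=\inf I$; if $J\not\subset I$ set $J_1 = J\setminus\overline{I}$. Now pass to the fixed-point net $\cB = \cA^{\overline{\langle g,h\rangle}}$, which is a M\"obius covariant subnet precisely because $\overline{\langle g,h\rangle}$ is a compact subgroup of $\Aut(\cA)$. For $x\in\cB(J_1)$ one has $\beta_g(x) = \beta_h(x) = x$, hence $tx = t\rho(x) = \sigma(x)t = xt$, so $t\in\cA(J)\cap\cB(J_1)'$. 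The pair $\cB\subset\cA$ is strongly additive in the sense of Xu (\cite{Xu05}), which yields $\cA(J)\cap\cB(J_1)' = \cA(I)$ and therefore $t\in\cA(I)$. This argument sidesteps the polar decomposition entirely, never needs to distinguish $k=e$ from $k\neq e$, and makes the role of compactness transparent. If you want to keep your own approach, you would at minimum need to replace the hand-waving final step by something of comparable force, and the most natural candidate is precisely this fixed-point-net / strong-additivity input.
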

\begin{proof}

(i) It was proved that $\Hom_{\cA_\infty}(\rho, \sigma)\subset \cA(I)$ holds in~\cite[Lemma 2.13]{Mueg05}. Hence $\Hom_{\cA_\infty}(\rho, \sigma)\subset \Hom_{\cA(I)}(\rho, \sigma)$ follows by restricting $\rho$ and $\sigma$ to the local algebra $\cA(I)$. As in the proof of \cite[Lemma 2.4]{BE1}, one can prove $\Hom_{\cA_\infty}(\rho, \sigma)\supset \Hom_{\cA(I)}(\rho, \sigma)$ using the strong additivity of $\cA$.

\smallskip

(ii) The statement follows from~\cite[Remark 2.7]{Mueg05}.

\smallskip

(iii) Let $t\in \Hom_{\cA_\infty}(\rho, \sigma)$. Since $\rho(x) = \sigma(x) = x$ for $x\in \cA(I_-)$ with $I_- < I$, by the Haag duality we have $t\in \cA(J)$ for some interval $J$ such that $\inf J = \inf I$. If $J\subset I$, we get $t\in \cA(I)$ which is the desired conclusion. In the following, we assume $J\supset I$ and set $J_1 = J\setminus \overline{I}$. Hence $I$ and $J_1$ are adjacent interval obtained by removing a single point from $J$. Let $\cB = \cA^{\overline{\langle g, h\rangle}}$ be the fixed point net by the closed subgroup $\overline{\langle g, h\rangle}$ generated by $g$ and $h$, which is a compact group by our assumption. For $x\in \cB(J_1)$, we have $tx = t\beta_g(x) = \beta_h(x)t = xt$. Thus we get $t\in \cA(J)\cap \cB(J_1)'$. Since $\cB\subset \cA$ is a strongly additive pair in the sense of \cite{Xu05}, we have $\cA(J)\cap \cB(J_1)' = \cA(I)$ and hence $t\in \cA(I)$.
\end{proof}

\begin{remark}
If $\cA$ has the split property, then the group $\Aut(\cA)$ is compact~\cite{DL84} and the condition in (iii) always holds. It is known that the split property is automatic for a conformal net~\cite{MTW18}.
\end{remark}

We now turn to the definition of the category $\GLoc \cA$. In the following, let us fix a compact subgroup $G < \Aut(\cA)$.
\begin{definition}
The category $\GLoc \cA$ is defined to be the full subcategory of $\End(\cA_\infty)$ whose objects are finite direct sums of $G$-localized transportable endomorphisms of $\End(\cA_\infty)$, i.e. $\rho\in \GLoc \cA$ if and only if there exist $g_i\in G$, $\rho_i\in \Delta_{\cA}^{g_i}$, and isometries $s_i\in \cA_\infty$ satisfying $\sum_i s_i s_i^* = 1$ such that
\begin{equation*}
\rho(\cdot) = \sum_i s_i\rho_i(\cdot)s_i^* \ .
\end{equation*} 
\end{definition}

The category $\GLoc \cA$ is a $\CC$-linear category with simple unit, finite direct sums and subobjects. By (i) and (iii) of Lemma~\ref{lemma_intertwiners_GLoc}, $\GLoc\cA$ is in fact a C*-tensor category.
It has a structure of strict $G$-crossed category in the sense of Definition~\ref{def_braided_G_crossed_category}~\cite[Proposition 2.10]{Mueg05}: By (ii) of Lemma~\ref{lemma_intertwiners_GLoc} and definition of $\GLoc \cA$, $(\GLoc \cA)_g = \Delta_{\cA}^g$ gives a $G$-graiding on $\GLoc \cA$. The strict action of $G$ is given by
\begin{alignat*}{2}
\gamma_g(\rho) &= \beta_g\circ\rho\circ\beta_g^{-1} &\qquad &\text{for} \ \rho\in\End(N), \\
\gamma_g(s) &= \beta_g(s) & &\text{for} \ s\in \Hom(\rho, \sigma).
\end{alignat*}
Moreover, $\GLoc \cA$ has a $G$-crossed braiding and becomes a braided $G$-crossed category~\cite[Proposition 2.17]{Mueg05}. For the later use, we explain its construction below. We first recall the following key lemma~\cite[Lemma 2.14]{Mueg05}.
\begin{lemma} \label{lemma:twisted_commutativity_of_g_loc_endo}
Let $I_1, I_2\in \cJ$ be intervals such that $I_1 < I_2$. For every $g, h\in G$, $\rho_1\in \Delta_{\cA}^g(I_1)$ and $\rho_2\in \Delta_{\cA}^h(I_2)$, we have
\begin{equation*}
    \rho_1\otimes\rho_2 = \gamma_g(\rho_2)\otimes \rho_1 .
\end{equation*}
\end{lemma}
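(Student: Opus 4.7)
The plan is to prove the claimed equality of endomorphisms pointwise on $\cA_\infty$. Since $\cA_\infty=\bigcup_{K\in\cJ}\cA(K)$, it suffices to check that $\rho_1(\rho_2(x))=\gamma_g(\rho_2)(\rho_1(x))$ for every $x\in\cA(K)$ and every interval $K$. The strategy is to fix a splitting point $p$ in the gap between $I_1$ and $I_2$, enlarge $K$ (harmlessly, since $\cA(K)\subset\cA(\widetilde{K})$ for $K\subset\widetilde{K}$) to an interval $\widetilde{K}$ that contains $I_1\cup\{p\}\cup I_2\cup K$, and then use the strong additivity of $\cA$ to generate $\cA(\widetilde{K})$ from two sub-algebras on which both sides of the identity become completely transparent.

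Concretely, choose $p\in(\sup I_1,\inf I_2)$ and set $\widetilde{K}_1:=\widetilde{K}\cap(-\infty,p)$, $\widetilde{K}_2:=\widetilde{K}\cap(p,+\infty)$. By construction $I_1\subset\widetilde{K}_1$, $I_2\subset\widetilde{K}_2$, $\widetilde{K}_1<I_2$ and $\widetilde{K}_2>I_1$. Strong additivity gives $\cA(\widetilde{K})=\cA(\widetilde{K}_1)\vee\cA(\widetilde{K}_2)$, so it is enough to verify the identity on each factor. Since $\rho_i\in\Delta_\cA^{g_i}(I_i)$ is also $g_i$-localized in any interval containing $I_i$, the Haag-duality-on-$\RR$ consequence recalled just before the lemma gives $\rho_1(\cA(\widetilde{K}_1))\subset\cA(\widetilde{K}_1)$ and $\rho_2(\cA(\widetilde{K}_2))\subset\cA(\widetilde{K}_2)$.

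For $x\in\cA(\widetilde{K}_1)$: because $\widetilde{K}_1<I_2$, the endomorphism $\rho_2$ acts trivially on $\cA(\widetilde{K}_1)$, and since $\beta_g^{-1}$ preserves $\cA(\widetilde{K}_1)$, it acts trivially on $\beta_g^{-1}\rho_1(x)\in\cA(\widetilde{K}_1)$ as well. Both sides therefore reduce to $\rho_1(x)$. For $x\in\cA(\widetilde{K}_2)$: because $\widetilde{K}_2>I_1$, the endomorphism $\rho_1$ restricts to $\beta_g$ on $\cA(\widetilde{K}_2)$, so the left-hand side becomes $\beta_g(\rho_2(x))$; on the right-hand side one computes
\[
\gamma_g(\rho_2)(\rho_1(x))=\beta_g\rho_2\beta_g^{-1}(\beta_g(x))=\beta_g(\rho_2(x)),
\]
so the two sides agree. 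Combining the two subalgebra computations through strong additivity yields the identity on $\cA(\widetilde{K})$, hence on $\cA(K)$, and since $K$ was arbitrary, on $\cA_\infty$.

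The only subtlety worth watching for is keeping the book-keeping of which algebras are preserved by which endomorphism consistent: one must invoke that $\rho_i$ preserves $\cA(J)$ whenever $I_i\subset J$, and also that $\beta_g$ preserves each local algebra, so that the compositions $\rho_1\rho_2$ and $\beta_g\rho_2\beta_g^{-1}\rho_1$ can be evaluated inside $\cA(\widetilde{K}_1)$ or $\cA(\widetilde{K}_2)$ without leaving the subalgebra. Once these preservation properties are in hand, the proof is simply the strong additivity splitting above.
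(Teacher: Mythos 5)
The paper does not prove this lemma---it is quoted from M\"uger's article~\cite[Lemma 2.14]{Mueg05} and used as a black box---so there is no in-text proof to compare against. Your argument is correct and is the natural one: split a large interval $\widetilde K$ at a point $p$ in the gap between $I_1$ and $I_2$, verify the identity separately on each half $\cA(\widetilde K_1)$, $\cA(\widetilde K_2)$ (where one of $\rho_1,\rho_2$ degenerates to the identity or to $\beta_g$), and glue via strong additivity. The one step that deserves a word is the gluing: passing from agreement on the $*$-algebra generated by $\cA(\widetilde K_1)$ and $\cA(\widetilde K_2)$ to agreement on the von Neumann algebra $\cA(\widetilde K)=\cA(\widetilde K_1)\vee\cA(\widetilde K_2)$ requires that $\rho_1\rho_2$ and $\gamma_g(\rho_2)\rho_1$ restrict to \emph{normal} endomorphisms of $\cA(\widetilde K)$. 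This is the only place where transportability, as opposed to mere $g$-localization, quietly enters: transporting $\rho_i$ to an interval disjoint from $\widetilde K$ exhibits $\rho_i|_{\cA(\widetilde K)}$ as inner, hence normal, and similarly for the composites. With that remark supplied, your book-keeping of which endomorphism preserves which local algebra (via the Haag-duality fact recalled just before the lemma) is exactly right, and the proof is complete.
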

Let $\rho\in \Delta_{\cA}^g(I)$ and $\sigma$ be $G$-localized in $J$. Choose an interval $I_- < J$. By the transportability, there is a unitary $U_{\rho;I, I_-}\in \cA_\infty$ such that $\widetilde{\rho} = \Ad (U_{\rho;I, I_-})\circ \rho\in \Delta_{\cA}^g(I_-)$. Note that we have $\widetilde{\rho}\otimes \sigma = \gamma_g(\sigma)\otimes \widetilde{\rho}$ by the above lemma. Then the braiding operator $c(\rho, \sigma)$ is defined by the composite
\begin{equation} \label{eq:def_of_GLoc_braiding}
\xymatrix{
c(\rho, \sigma)\colon \rho\otimes \sigma \ar[rr]^-{U_{\rho;I, I_-}\otimes 1_\sigma} && \widetilde{\rho}\otimes \sigma = \gamma_g(\sigma)\otimes \widetilde{\rho} \ar[rr]^-{1_{\gamma_g(\sigma)}\otimes U_{\rho;I, I_-}^*} && \gamma_g(\sigma)\otimes \rho
} \ ,
\end{equation}
namely we define $c(\rho, \sigma)=\gamma_g(\sigma)(U_{\rho;I, I_-}^*)U_{\rho;I, I_-}$.
Note that $c(\rho, \sigma)$ is a unitary by its construction. It was showed~\cite[Proposition 2.17]{Mueg05} that $c(\rho, \sigma)$ does not depend on the choice of an interval $I_-$ and a unitary $U_{\rho;I, I_-}\in \cA_\infty$ as long as they satisfy conditions stated above.
\begin{remark}
On the full subcategory $\Delta_{\cA}^e$, the $G$-crossed braiding $c(\rho, \sigma)$ coincides with the ordinary DHR braiding $\eps(\rho, \sigma)$ by its construction.
\end{remark}

We give another formula for the $G$-crossed braiding for later applications.
\begin{lemma} \label{lemma:GLoc_braiding_another_formula}
Let $\rho\in \Delta_{\cA}^g(I)$ and $\sigma\in \Delta_{\cA}^h(I)$. Suppose that $I_+\in \cJ$ is an interval with $I_+ > I$ and $U_{\sigma;I, I_+}\in \cA_\infty$ is a unitary such that $\widetilde{\sigma} = \Ad(U_{\sigma:I, I_+})\circ \sigma\in \Delta_{\cA}^h(I_+)$. Then the composite
\begin{equation*}
\xymatrix{
    \gamma_g(U_{\sigma;I, I_+}^*)\rho(U_{\sigma;I, I_+})\colon \rho\otimes \sigma \ar[rr]^-{1_\rho\otimes U_{\sigma;I, I_+}} && \rho\otimes \widetilde{\sigma} = \gamma_g(\widetilde{\sigma})\otimes \rho \ar[rr]^-{\gamma_g(U_{\sigma;I, I_{+}})\otimes 1_\rho} && \gamma_g(\sigma)\otimes \rho
    }
\end{equation*}
is equal to $c(\rho, \sigma)$.
\end{lemma}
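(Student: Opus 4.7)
The plan is to derive the alternative formula from the original definition of $c(\rho,\sigma)$ by invoking the naturality of the $G$-crossed braiding, which has already been established in Müger's Proposition 2.17 (cited above). Writing $V := U_{\sigma;I,I_+}$, the unitary $V$ is a morphism $\sigma \to \widetilde{\sigma}$ in $\GLoc\cA$ (both endomorphisms are $h$-localized). I would apply the naturality identity \eqref{naturality_gcbraiding} to the pair $s = 1_\rho \colon \rho\to\rho$ in $\Delta_\cA^g$ and $t = V \colon \sigma \to \widetilde{\sigma}$ in $\Delta_\cA^h$. Computing the tensor products of intertwiners by the formulas in Section~2.1 gives $1_\rho \otimes V = \rho(V)$ and $\gamma_g(V)\otimes 1_\rho = \gamma_g(V)$, so naturality yields
\begin{equation*}
c(\rho,\widetilde{\sigma})\cdot \rho(V) \;=\; \gamma_g(V)\cdot c(\rho,\sigma),
\end{equation*}
and hence $c(\rho,\sigma) = \gamma_g(V^*)\cdot c(\rho,\widetilde{\sigma})\cdot \rho(V)$.

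The remaining step is to show $c(\rho,\widetilde{\sigma})=1$. Since $c$ is well-defined independently of the auxiliary data used in \eqref{eq:def_of_GLoc_braiding}, I would compute $c(\rho,\widetilde{\sigma})$ using the trivial transport of $\rho$: take the auxiliary interval to be $I$ itself, which is a legal choice because $I < I_+$ by hypothesis and $I_+$ is the interval of localization of $\widetilde{\sigma}$, and take the transport unitary to be $1$, valid because $\rho \in \Delta_\cA^g(I)$ already. Plugging into \eqref{eq:def_of_GLoc_braiding} then gives $c(\rho,\widetilde{\sigma}) = \gamma_g(\widetilde{\sigma})(1^*)\cdot 1 = 1$. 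Substituting back produces the asserted identity $c(\rho,\sigma) = \gamma_g(V^*)\rho(V)$.

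This proof is short, being essentially a bookkeeping exercise with naturality. The only non-mechanical point, and thus the closest thing to an obstacle, is recognizing that the strict inequality $I_+ > I$ in the hypothesis exactly matches the separation required by the construction of the braiding, so that $(I_-, U_{\rho;I,I_-}) = (I,1)$ really is admissible auxiliary data for $c(\rho,\widetilde{\sigma})$; one must also use the well-definedness statement explicitly, rather than the defining formula with a specific left-shifted interval.
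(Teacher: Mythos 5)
Your proof is correct, and it takes a genuinely different (and arguably cleaner) route than the paper. The paper's proof introduces a second transport unitary $U_{\rho;I,I_-}$ transporting $\rho$ to the left of $I$, writes out $c(\rho,\sigma)$ with the defining formula \eqref{eq:def_of_GLoc_braiding}, and then verifies by a direct unitary computation that $c(\rho,\sigma)\cdot\bigl(\gamma_g(U_{\sigma;I,I_+}^*)\rho(U_{\sigma;I,I_+})\bigr)^*=1$, using the intertwining relations of both transport unitaries together with the localization estimates $\widetilde{\rho}(U_2^*)=\gamma_g(U_2^*)$ and $\gamma_g(\widetilde{\sigma})(U_1^*)=U_1^*$. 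Your argument instead cites naturality of the $G$-crossed braiding once (already known from M\"uger's construction), reducing the claim to $c(\rho,\widetilde{\sigma})=1$, which you then get by observing that $(I_-,U_{\rho;I,I_-})=(I,1)$ is admissible auxiliary data for the pair $(\rho,\widetilde{\sigma})$ because $\rho$ is already $g$-localized in $I<I_+$. What the naturality route buys you is that it localizes all the bookkeeping into one invocation of a structural axiom rather than threading the intertwining relations of two independent transport unitaries by hand; the cost is that you must lean on the full force of M\"uger's result that $c$ is a well-defined natural family independent of the auxiliary choices, which the paper's computation in effect re-verifies for this particular pair. Both are sound.

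One small remark: in the statement as printed the second arrow of the diagram is labeled $\gamma_g(U_{\sigma;I,I_+})\otimes 1_\rho$, which does not match the stated composite $\gamma_g(U_{\sigma;I,I_+}^*)\rho(U_{\sigma;I,I_+})$; the label should carry a $*$. This is a typo in the paper and does not affect your argument, which correctly targets the composite $\gamma_g(V^*)\rho(V)$.
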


\begin{proof}
Taking an interval $I_- < I$ and a unitary $U_{\rho;I, I_-}$ such that $\widetilde{\rho} = \Ad(U_{\rho;I, I_-})\circ \rho\in \Delta_{\cA}^g(I_-)$, we have $c(\rho, \sigma) = \gamma_g(\sigma)(U_{\rho;I, I_-}^*)U_{\rho;I, I_-}$ as explained above. To simplify the notations, we write $U_1 = U_{\rho;I, I_-}$ and $U_2 = U_{\sigma;I, I_+}$.
Since $I_- < I < I_+$, we have $\widetilde{\rho}(
U_2^*) = \beta_g(U_2^*)=\gamma_g(U_2^*)$ and $\gamma_g(\widetilde{\sigma})(U_1^*) = U_1^*$.
Using these two identities and the intertwining properties of $U_1$ and $U_2$, we compute
\begin{align*}
c(\rho, \sigma)\cdot \left(\gamma_g(U_{\sigma;I, I_+}^*)\rho(U_{\sigma;I, I_+})\right)^* &= \gamma_g(\sigma)(U_1^*)U_1\rho(U_2^*)\gamma_g(U_2) \\
&= \gamma_g(\sigma)(U_1^*)\widetilde{\rho}(U_2^*)U_1\gamma_g(U_2) \\
&= \gamma_g(\sigma)(U_1^*)\gamma_g(U_2^*)U_1\gamma_g(U_2) \\
&= \gamma_g(U_2^*)\gamma_g(\widetilde{\sigma})(U_1^*)U_1 \gamma_g(U_2) \\
&= \gamma_g(U_2^*)U_1^*U_1 \gamma_g(U_2) = 1.
\end{align*}
Thus we get the equality $c(\rho, \sigma) = \gamma_g(U_{\sigma;I, I_+}^*)\rho(U_{\sigma;I, I_+})$, which completes the proof.
\end{proof}

\section{$\alpha$-induction for twisted representations} \label{section:alpha_induction_for_twisted_reps}

This section is a main part of this article.

\subsection{Assumptions and notations} \label{subsection:ind_for_twisted_reps_assumptions}

Suppose that we have a given local M\"obius covariant net $\cA$ and its M\"obius covariant subnet $\cB\subset \cA$. We assume that both $\cA$ and $\cB$ are strongly additive and the index $[\cA:\cB]$ is finite. We also assume that we have a given compact subgroup $G < \Aut(\cA, \cB)$. We denote by $G'$ the subgroup of $\Aut(\cB)$ obtained by restricting each element of $G$ to $\cH_\cB$. We write $g'$ the image of $g\in G$ under the canonical surjection $G\to G'$.
We use the following notations for the group actions on these nets and corresponding categories of twisted representations: We denote by
\begin{itemize}
\item $\tilbeta_g$ the action of $g\in G$ on each local algebra $\cA(I)$ of $\cA$.
\item $\tilgamma_g$ the action of $g\in G$ on the category $\GLoc \cA$.
\item $\beta_{g'}$ the action of $g'\in G'$ on each local algebras $\cB(I)$ of $\cB$.
\item $\gamma_{g'}$ the action of $g'\in G'$ on the category $\GpLoc \cB$
\end{itemize}
Since the action $\tilgamma_g$ of $g\in G$ on $\GLoc \cA$ naturally extends to $\End(\cA_\infty)$, we also denote by $\tilgamma_g$ this extension. We will write $c^+(\lambda, \mu)\equiv c(\lambda, \mu)$ for the $G'$-crossed braiding of $\GpLoc \cB$ and $c^-(\lambda, \mu)$ for its opposite as explained in Section~\ref{section:braided_G_crossed_category}.

\subsection{Definition of induction} \label{subsection:ind_for_twisted_reps_def}

Let $\iota\colon \cB_\infty \hookrightarrow \cA_\infty$ and $\iota_I\colon \cB(I) \hookrightarrow \cA(I)$ be the inclusion maps. Since restriction of $\cB\subset \cA$ to $\cJ$ is a finite index standard net of subfactors in the sense of~\cite{LR95}, the results in the same article implies the following.

\begin{proposition} \label{prop:standard_net_of_subfactors_and_conjugate}
Let $\cB\subset \cA$ as above. For every $I\in \cJ$, there exists a homomorphism $\iotabar\colon \cA_\infty\to \cB_\infty$, $w\in \cB(I)$ and $v\in \cA(I)$ satisfying the following:
\begin{enumerate}
\item For every $J\in\cJ$ with $J\supset I$, $\iotabar|_{\cA(J)}$ is a conjugate of $\iota_J$ and $(w, v)$ is a standard solution of conjugate equation for $\iota_J$ and $\iotabar|_{\cA(J)}$.
\item $\iotabar$ acts identically on $\cB(I)'\cap \cA_\infty$.
\end{enumerate}
If $\widetilde{I}\in \cJ$ is another choice of an interval and $\iotabar'$ and $(w', v')$ are the corresponding homomorphism and operators, then there exists a unitary in $u\in \cB_\infty$ satisfying
\begin{gather*}
u\in \Hom(\iotabar, \iotabar'), \\
w' = uw, \quad v' = \iota(u)v.
\end{gather*}
\end{proposition}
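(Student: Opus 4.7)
The proof has three stages: local existence, coherent global extension (which also yields property (ii)), and uniqueness.

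\emph{Local construction.} The inclusion $\cB(I)\subset \cA(I)$ is finite-index and type III, so the subfactor/Q-system recipe recalled in Eq.~\eqref{Qsystem_from_subfactor} produces a conjugate $\iotabar_I\colon \cA(I)\to \cB(I)$ of $\iota_I$ together with a standard solution $(w,v)$ of the conjugate equation, with $w\in\cB(I)$ and $v\in\cA(I)$. This supplies all the data needed at the local level.

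\emph{Global extension.} For every $J\in\cJ$ with $J\supset I$, M\"obius covariance forces $[\cA(J):\cB(J)]=[\cA:\cB]$, so $d\iota_J=d\iota_I$ and the standardness relations $w^*w=v^*v=d\iota\cdot 1$ persist when $(w,v)$ is viewed inside $\cB(J)\times\cA(J)$. The theory of standard nets of subfactors of~\cite{LR95} then provides a unique conjugate $\iotabar_J\colon\cA(J)\to\cB(J)$ of $\iota_J$ for which $(w,v)$ is still a standard solution, and these are coherent: $\iotabar_K|_{\cA(J)}=\iotabar_J$ whenever $I\subset J\subset K$, since all are determined by the single element $v\in\cA(I)$. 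Taking the union produces $\iotabar\colon \cA_\infty\to \cB_\infty$ with (i) built in. Property (ii) then reflects the locality of the construction: because $\iotabar$ is built entirely from data at $I$, for $a\in \cB(I)'\cap \cA_\infty$ the intertwining relation $va=\iotabar(a)v$ combined with $[a,\cB(I)]=0$ (in particular $[a,w]=0$) and the basis expansion~\eqref{subfactor_basis_expansion} forces $\iotabar(a)=a$.

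\emph{Uniqueness.} Given another choice $\widetilde I\in\cJ$ with associated data $(\iotabar',w',v')$, pick some $J\in\cJ$ containing both $I$ and $\widetilde I$. On $\cA(J)$, both $\iotabar|_{\cA(J)}$ and $\iotabar'|_{\cA(J)}$ are conjugates of $\iota_J$ with standard solutions $(w,v)$ and $(w',v')$ respectively, so Eq.~\eqref{unitary_another_conjugate} supplies a unitary $u\in\Hom(\iotabar|_{\cA(J)},\iotabar'|_{\cA(J)})\subset \cB(J)\subset \cB_\infty$ satisfying $w'=uw$ and $v'=\iota(u)v$. To promote $u$ to a global intertwiner, apply the same subfactor-level uniqueness at each $K\supset J$ to obtain $u_K\in\cB(K)$ with $\iota(u_K)v=v'=\iota(u)v$; then $\iota(u-u_K)v=0$ and Remark~\ref{remark:subfactor_expansion_uniqueness} force $u_K=u$, so a single $u\in\cB_\infty$ intertwines $\iotabar$ and $\iotabar'$ on all of $\cA_\infty$.

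\emph{Main obstacle.} The crux is the global extension in Stage~2: one must verify that the single pair $(w,v)$ localized in $I$ really does encode a compatible family of conjugates $\iotabar_J$ across all $J\supset I$ via the standard-net-of-subfactors machinery, and then deduce the localization property (ii) from this construction. Once this is in place, part (i) is immediate from the extension procedure and uniqueness reduces to a direct application of Eq.~\eqref{unitary_another_conjugate} together with Remark~\ref{remark:subfactor_expansion_uniqueness}.
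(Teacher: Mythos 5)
The paper itself offers no detailed proof of this proposition: it simply declares that restricting $\cB\subset\cA$ to $\cJ$ gives a finite index standard net of subfactors in the sense of~\cite{LR95} and invokes the results of that article. Your plan of (a) local construction, (b) globalization via the LR95 machinery, and (c) uniqueness via Eq.~\eqref{unitary_another_conjugate} plus Remark~\ref{remark:subfactor_expansion_uniqueness} is essentially the same route, and your treatment of uniqueness (covering larger and larger intervals $K\supset J$ and pinning down $u_K=u$ from $\iota(u_K-u)v=0$) is correct.

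However, your justification of property (ii) does not hold up. You claim that $va=\iotabar(a)v$, $[a,w]=0$, and the basis expansion~\eqref{subfactor_basis_expansion} force $\iotabar(a)=a$. Following the basis expansion together with the conjugate equation $\iotabar(v^*)w=1$, for \emph{any} $a\in\cA_\infty$ one obtains
\[
a \;=\; d\iota\,E(av^*)v \;=\; w^*\,\iotabar(a)\,\iotabar(v^*)\,w\,v \;=\; w^*\,\iotabar(a)\,v,
\]
and substituting $\iotabar(a)v=va$ and the other conjugate equation $w^*v=1$ reduces this to the tautology $a=a$. The hypothesis $[a,w]=0$ never enters in a useful way and does not break the circularity. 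More structurally: if $u\in\cB(I_0)$ is any unitary and one replaces $(\iotabar,w,v)$ by $(\Ad(u)\iotabar,uw,uv)$, the new data still satisfies all the formal conjugate-equation relations you invoke, yet $\Ad(u)\iotabar$ acts as $\Ad(u)$ on $\cB(I_0)'\cap\cA_\infty$, which is the identity only because $u$ happens to lie in $\cB(I_0)$ and therefore commutes with everything in $\cB(I_0)'$; this already signals that (ii) is not a consequence of the conjugate equation alone but depends on a localization property of the specific $\iotabar$ built in~\cite{LR95} from the common vacuum and modular conjugations. If you intend to give an honest proof of (ii), you need to appeal to that construction explicitly (as the paper implicitly does by citing~\cite{LR95}) rather than to the basis expansion.
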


In the following, let us fix an arbitrary interval $I_0\in \cJ$ and a homomorphism $\iotabar\colon \cA_\infty\to \cB_\infty$ as in Proposition~\ref{prop:standard_net_of_subfactors_and_conjugate}. Note that we have $\theta = \iotabar\iota \in \Delta_{\cB}^e(I_0)$. We denote by $(w, v)$ the corresponding standard solution.
In the following, we will drop subscripts for $\iota$ if no confusion arise. We denote by $\Theta = (\theta=\iotabar\iota, w, x=\iotabar(v))$ a Q-system associated with the subfactor $\cB(I_0)\subset \cA(I_0)$. We note that the same formula defines a Q-system associated with $\cB(I)\subset \cA(I)$ for every $I\in\cJ$ with $I\supset I_0$.
We denote by $z=\{z_g\colon \gamma_{g'}(\theta)\to \theta \}_{g\in G}$ the $G$-equivariant structure of $\Theta$ associated with the action of $G$ on $\cB(I_0)\subset \cA(I_0)$ as in Lemma~\ref{lemma:G_equivariant_str_on_Q_system}.
For the convenience, we collect relevant properties under notations of this section:
\begin{gather}
\tilbeta_g\circ \iota = \iota\circ \beta_{g'}\quad \text{for all} \ g\in G, \label{eq:beta_comm_iota} \\
z_g\in \cB(I_0) \ \text{and} \ \tilbeta_g(v) = z_g^* v \quad \text{for all} \ g\in G, \label{eq:characterization_z_g} \\
z_g\in \Hom(\beta_{g'}\iotabar, \iotabar\tilbeta_g) \quad \text{for all} \ g\in G, \label{eq:z_g_intertwining_property}\\
z_{gh} = z_g\gamma_{g'}(z_g) \equiv z_g\beta_{g'}(z_h) \quad \text{for all} \ g,h\in G, \label{eq:z_g_cocycle_eq}\\
\theta(z_g)z_g\gamma_{g'}(x) \equiv \theta(z_g)z_g\beta_{g'}(\iotabar(v)) = \iotabar(v)z_g \equiv x z_g \quad \text{for all} \ g\in G. \label{eq:z_g_alg_isom}
\end{gather}
where the last identity comes from the fact that $z_g$ is a unitary isomorphism of Q-system from $g(\Theta)$ to $\Theta$.

We will define two kind of induced endomorphisms for twisted representations using the $G$-crossed braiding and $G$-equivariant structure, which generalize $\alpha$-induced endomorphisms.
To give their definition, we prove the following lemma as in the case of usual $\alpha$-induction.

\begin{lemma} \label{lemma:basic_lemma_for_def_galpha}
For $g\in G$ and $\lambda\in \Delta_{\cB}^{g'}(I_0)$, we have
\begin{align*}
\Ad(z_g c^+(\lambda, \theta))\circ\lambda\circ\overline{\iota}(v) &= \theta(c^+(\lambda, \theta)^* z_g^*)\overline{\iota}(v), \\
\Ad(c^-(\lambda, \theta))\circ\lambda\circ\overline{\iota}(v) &= \theta(c^-(\lambda, \theta)^*)\overline{\iota}(v).
\end{align*}
\end{lemma}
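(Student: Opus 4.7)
The plan is to reduce both identities to statements about the element $x := \iotabar(v)$ in $\cB_\infty$ (which is an intertwiner in $\Hom(\theta, \theta^2)$), and to prove each by combining the naturality of the $G'$-crossed braiding, the hexagon identity, and --- only in the $c^+$ case --- the defining relations of the $G$-equivariant structure $z_g$. Write $c := c^+(\lambda, \theta)$ and $d := c^+(\theta, \lambda)$. Since $\theta$ lies in the identity component of $\GpLoc \cB$, one has $c^-(\lambda, \theta) = c^+(\theta, \lambda)^{-1} = d^*$.

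I would first dispatch the $c^-$ identity, which does not involve $z_g$. Applying naturality of $c^+$ to $x \otimes 1_\lambda \colon \theta\lambda \to \theta^2\lambda$ gives $c^+(\theta^2, \lambda)\cdot x = \lambda(x)\cdot d$, and expanding $c^+(\theta^2, \lambda) = d\cdot \theta(d)$ via the hexagon identity~\eqref{braid_rel_gcbraiding1} (with $X = Y = \theta$, both in the identity component) produces
\[
d\,\theta(d)\,x = \lambda(x)\,d.
\]
Left-multiplication by $d^*$ immediately rearranges this to $d^*\,\lambda(x)\,d = \theta(d)\,x$, which is precisely the desired identity $\Ad(c^-(\lambda, \theta)) \lambda\iotabar(v) = \theta(c^-(\lambda, \theta)^*) \iotabar(v)$.

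For the $c^+$ identity, the parallel move --- naturality of $c^+$ applied to $1_\lambda \otimes x$, combined with the hexagon~\eqref{braid_rel_gcbraiding2} giving $c^+(\lambda, \theta^2) = \gamma_{g'}(\theta)(c)\cdot c$ --- yields
\[
\gamma_{g'}(\theta)(c)\,c\,\lambda(x) = \gamma_{g'}(x)\,c.
\]
I would then multiply on the left by $z_g$ and invoke two properties of $z_g$: the intertwining property $z_g\,\gamma_{g'}(\theta)(\,\cdot\,) = \theta(\,\cdot\,)\,z_g$, i.e.\ $z_g \in \Hom(\gamma_{g'}(\theta), \theta)$, which collapses $z_g\gamma_{g'}(\theta)(c)$ to $\theta(c)\,z_g$ on the left; and the Q-system-morphism relation $\theta(z_g)\,z_g\,\gamma_{g'}(x) = x\,z_g$ from~\eqref{eq:z_g_alg_isom}, which rewrites $z_g\,\gamma_{g'}(x)$ as $\theta(z_g^*)\,x\,z_g$ on the right. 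Writing $u := z_g c$, these substitutions collapse the equation to
\[
\theta(c)\,u\,\lambda(x) = \theta(z_g^*)\,x\,u,
\]
and then left-multiplication by $\theta(c^*)$ followed by right-multiplication by $u^*$ gives $u\,\lambda(x)\,u^* = \theta(c^* z_g^*)\,x = \theta(u^*)\,\iotabar(v)$, which is the first identity.

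The only genuine subtlety is in the $c^+$ case: $z_g$ enters asymmetrically on the two sides of the target equation (as an outer factor inside $u$ on the left, but as $\theta(z_g^*)$ on the right), and reconciling this asymmetry is exactly what the Q-system-morphism identity~\eqref{eq:z_g_alg_isom} accomplishes. Once that input is in hand, the rest is routine bookkeeping of the naturality and hexagon relations for the $G'$-crossed braiding on $\GpLoc \cB$.
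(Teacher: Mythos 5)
Your proof is correct and follows essentially the same route as the paper's: both arguments reduce the claim to a chain of rewritings using naturality~\eqref{naturality_gcbraiding}, the hexagon identity~\eqref{braid_rel_gcbraiding}, the intertwining property $z_g\in\Hom(\gamma_{g'}(\theta),\theta)$, and the Q-system-morphism relation~\eqref{eq:z_g_alg_isom}. The only cosmetic differences are that you first isolate the $z_g$-free identity $\gamma_{g'}(\theta)(c)\,c\,\lambda(x)=\gamma_{g'}(x)\,c$ before injecting $z_g$, and that for the $c^-$ case you reduce to $c^+$ via $c^-(\lambda,\theta)=c^+(\theta,\lambda)^*$ rather than quoting the opposite-braiding axioms~\eqref{naturality_gcopbraiding}, \eqref{braid_rel_gcopbraiding} directly; both choices are sound and equivalent to the paper's.
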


\begin{proof}
For the first identity, it suffices to show that
\begin{equation*}
\theta(z_g c^+(\lambda, \theta))z_g c^+(\lambda, \theta)\lambda(\iotabar(v)) = \iotabar(v)z_g c^+(\lambda, \theta).
\end{equation*}
Using the properties of the braiding Eq.~\eqref{naturality_gcbraiding} and Eq.~\eqref{braid_rel_gcbraiding}, and the property of $z_g$ Eq.~\eqref{eq:z_g_alg_isom}, we compute
\begin{align*}
\theta(z_g c^+(\lambda, \theta))\cdot z_g c^+(\lambda, \theta)\lambda(\iotabar(v)) &= \theta(z_g)z_g \cdot \gamma_g(\theta)(c^+(\lambda, \theta))\cdot c^+(\lambda, \theta)\lambda(\iotabar(v)) \\
&= \theta(z_g)z_g c^+(\lambda, \theta^2)\lambda(\iotabar(v)) \\
&= \theta(z_g)z_g\gamma_{g'}(\iotabar(v))c^+(\lambda, \theta) \\
&= \theta(z_g)z_g\beta_{g'}(\iotabar(v))c^+(\lambda, \theta) \\
&= \iotabar(v) z_gc^+(\lambda, \theta)
\end{align*}
and we get the desired identity. The second identity is obtained similarly by using only the properties of the opposite braiding Eq.~\eqref{naturality_gcopbraiding} and Eq.~\eqref{braid_rel_gcopbraiding}.
\end{proof}
Let $I\supset I_0$ be an interval. For $n\in \cB(I)$, we have 
\begin{subequations} \label{eq:basic_identity_on_sub}
\begin{align} 
\Ad(z_g c^+(\lambda, \theta))\circ\lambda\circ\overline{\iota}(\iota(n)) &= \Ad(z_g c^+(\lambda, \theta))\circ\lambda\circ\theta(n) = \theta\circ\lambda(n), \\
\Ad(c^-(\lambda, \theta))\circ\lambda\circ\overline{\iota}(\iota(n)) &= \Ad(c^-(\lambda, \theta))\circ\lambda\circ\theta(n) = \theta\circ\lambda(n).
\end{align}
\end{subequations}
where we used $z_g c^+(\lambda, \theta), c^-(\lambda, \theta)\in \Hom(\lambda\theta, \theta\lambda)$.
Since $\cA(I) = \cB(I)v$, we have the following corollary.

\begin{corollary} \label{cor:basic_cor_for_def_galpha}
For $g\in G$ and $I\supset I_0$ and $\lambda\in \Delta_{\cB}^{g'}(I_0)$, we have
\begin{align*}
\Ad(z_g c^+(\lambda, \theta))\circ\lambda\circ\overline{\iota}(\cA(I)) &\subset \iotabar(\cA(I)), \\
\Ad(c^-(\lambda, \theta))\circ\lambda\circ\overline{\iota}(\cA(I)) &\subset \iotabar(\cA(I)).
\end{align*}
\end{corollary}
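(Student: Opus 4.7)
The plan is to reduce everything to Lemma~\ref{lemma:basic_lemma_for_def_galpha} and equation~\eqref{eq:basic_identity_on_sub} via the subfactor decomposition $\cA(I)=\cB(I)\,v$. Since Proposition~\ref{prop:standard_net_of_subfactors_and_conjugate} guarantees that $(w,v)$ is a standard solution of the conjugate equation for $\iota_I$ and $\iotabar|_{\cA(I)}$ whenever $I\supset I_0$, the basis formula~\eqref{subfactor_basis_expansion} applies at the local level and gives $\cA(I)=\cB(I)\,v$. Hence an arbitrary element of $\cA(I)$ can be written as $\iota(n)\,v$ with $n\in\cB(I)$, and it suffices to check that $\Ad(T)\circ\lambda\circ\iotabar$ sends such an element into $\iotabar(\cA(I))$, where $T$ stands for either $z_g c^+(\lambda,\theta)$ or $c^-(\lambda,\theta)$.

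With this in hand, I would exploit that $\iotabar$, $\lambda$, and $\Ad(T)$ are all $*$-algebra homomorphisms to expand
\[
\Ad(T)\circ\lambda\circ\iotabar\bigl(\iota(n)\,v\bigr)\;=\;\Ad(T)\bigl(\lambda\theta(n)\bigr)\cdot\Ad(T)\bigl(\lambda(\iotabar(v))\bigr).
\]
The first factor equals $\theta\lambda(n)=\iotabar(\iota(\lambda(n)))$ by~\eqref{eq:basic_identity_on_sub}, and this manifestly lies in $\iotabar(\cA(I))$ since $\lambda$ is $g'$-localized in $I_0\subset I$ and hence preserves $\cB(I)$. For the second factor Lemma~\ref{lemma:basic_lemma_for_def_galpha} gives $\Ad(T)(\lambda(\iotabar(v)))=\theta(y)\,\iotabar(v)$, where $y=c^+(\lambda,\theta)^* z_g^*$ or $y=c^-(\lambda,\theta)^*$ according to the case; I would then rewrite this as $\iotabar(\iota(y))\,\iotabar(v)=\iotabar(\iota(y)\,v)$, which lies in $\iotabar(\cA(I))$ as soon as $\iota(y)\,v\in\cA(I)$.

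The only supporting fact required is therefore that $y\in\cB(I_0)\subset\cB(I)$: for $z_g$ this is built into its construction by~\eqref{eq:characterization_z_g}, while for $c^\pm(\lambda,\theta)$ it follows from Lemma~\ref{lemma_intertwiners_GLoc}(i) applied to $\cB$, since both $\lambda\theta$ and its target under the $G'$-crossed (or opposite) braiding are $g'$-localized in $I_0$, so any intertwiner between them automatically lies in $\cB(I_0)$. Combining the two factors gives the required inclusion $\Ad(T)\circ\lambda\circ\iotabar(\cA(I))\subset\iotabar(\cA(I))$. There is no real obstacle here; the content is essentially bookkeeping on top of Lemma~\ref{lemma:basic_lemma_for_def_galpha} and~\eqref{eq:basic_identity_on_sub}, which is precisely why the statement is phrased as a corollary.
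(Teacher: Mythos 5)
Your proposal is correct and follows essentially the same route as the paper: the paper's proof of the corollary is implicit in the sentence immediately preceding it (''Since $\cA(I) = \cB(I)v$, we have the following corollary''), and you have simply spelled out the bookkeeping — expanding a generic element $\iota(n)v$ of $\cA(I)$, applying Eq.~\eqref{eq:basic_identity_on_sub} to the $\cB(I)$-part and Lemma~\ref{lemma:basic_lemma_for_def_galpha} to the $v$-part, and noting that $\lambda(n)\in\cB(I)$ and $y=c^\pm(\lambda,\theta)^*z_g^*\in\cB(I_0)$ so that both factors land in $\iotabar(\cA(I))$.
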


We now give the definition of induced endomorphisms of twisted representation.

\begin{definition}
For $g\in G$ and $\lambda\in \Delta_{\cB}^{g'}(I_0)$, we define the two endomorphisms of $\End(\cA_\infty)$ by
\begin{align*}
\gpalpha{g}{\lambda} &= \iotabar^{-1}\circ \Ad(z_g c^+(\lambda, \theta))\circ\lambda\circ \iotabar, \\
\gmalpha{\lambda} &= \iotabar^{-1}\circ \Ad(c^-(\lambda, \theta))\circ\lambda\circ \iotabar.
\end{align*}
\end{definition}

\begin{remark}
\begin{enumerate}
\item By Corollary~\ref{cor:basic_cor_for_def_galpha}, $\gpalpha{g}{\lambda}$ and $\gmalpha{\lambda}$ preserve $\cA(I)$ if $I\supset I_0$ and one can consider them as elements of $\End(\cA(I))$.
\item If $g=e$, then the endomorphisms as above are equal to usual $\alpha$-induced endomorphisms. If $g\ne e$ but $g' = e$, then the above $\gmalpha{\lambda}$ is still equal to the usual $\alpha^-$-induced endomorphism $\malpha{\lambda}$. The endomorphism $\gmalpha{\lambda}$ is defined by using only the opposite braiding, but $\gpalpha{g}{\lambda}$ is defined by using the braiding and the $G$-equivariant structure.
\end{enumerate}
\end{remark}

\subsection{Basic properties of induced endomorphisms} \label{subsection:ind_for_twisted_reps_basic_properties}

In this subsection, we will see some basic properties of induced endomorphisms. By Eq.~\eqref{eq:basic_identity_on_sub}, $\gpalpha{g}{\lambda}$ and $\gmalpha{\lambda}$ are extensions of $\lambda$, i.e. $\gpalpha{g}{\lambda}\circ\iota = \iota\circ\lambda$ and $\gmalpha{\lambda}\circ\iota = \iota\circ\lambda$. These two extension satisfy
\begin{equation} \label{eq:values_of_galpha_on_v}
\gpalpha{g}{\lambda}(v) = c^+(\lambda, \theta)^* z_g^*v \ , \quad \gmalpha{\lambda}(v) = c^-(\lambda, \theta)^*v \ ,
\end{equation}
by Lemma~\ref{lemma:basic_lemma_for_def_galpha}. 
Since we have $\cA(I) = \cB(I)v$ for $I\supset I_0$, these properties characterize the induced endomorphisms.

In the following, we will derive some formulas using the properties of the braiding and the $G$-equivariant structure $z$.
We first see the multiplicativity.

\begin{proposition}
Let $g, h\in G$. For every $\lambda\in \Delta_{\cB}^{g'}(I_0)$ and $\mu \in \Delta_{\cB}^{h'}(I_0)$, we have
\begin{align*}
\gpalpha{g}{\lambda}\circ \gpalpha{h}{\mu} &= \gpalpha{gh}{\lambda\mu} \ , \\
\gmalpha{\lambda}\circ \gmalpha{\mu} &= \gmalpha{\lambda\mu} \ ,
\end{align*}
in $\End(\cA_\infty)$.
\end{proposition}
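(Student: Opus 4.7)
The plan is to invoke the characterization stated just before the proposition: since $\cA(I)=\cB(I)v$ for every $I\supset I_0$ and $\cA_\infty=\bigcup_{I\supset I_0}\cA(I)$, any endomorphism of $\cA_\infty$ that restricts to $\iota\circ(\lambda\mu)$ on $\cB_\infty$ is determined by its value on $v$. Both sides of each identity are visibly extensions of $\lambda\mu$: applied to $\iota(n)$ with $n\in\cB_\infty$, the composition of extensions of $\lambda$ and $\mu$ yields $\iota((\lambda\mu)(n))$. Hence it suffices to evaluate both sides on $v$ using Eq.~\eqref{eq:values_of_galpha_on_v}, noting that the intertwiners $c^\pm(\mu,\theta)$ and $z_h$ all lie in $\cB(I_0)$ (by Lemma~\ref{lemma_intertwiners_GLoc}(i) and Eq.~\eqref{eq:characterization_z_g}), so the extension property lets one pull them through.

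For the minus identity, the computation is
\begin{align*}
\gmalpha{\lambda}(\gmalpha{\mu}(v)) &= \gmalpha{\lambda}(c^-(\mu,\theta)^*v) \\
&= \lambda(c^-(\mu,\theta)^*)\,c^-(\lambda,\theta)^*\,v.
\end{align*}
Eq.~\eqref{braid_rel_gcopbraiding1} applied with $X=\lambda$, $Y=\mu$, $Z=\theta$ (where $k=e$ since $\theta\in\Delta_\cB^e$) gives $c^-(\lambda\mu,\theta)=c^-(\lambda,\theta)\,\lambda(c^-(\mu,\theta))$, whose adjoint matches the expression above, so the right-hand side equals $c^-(\lambda\mu,\theta)^*v=\gmalpha{\lambda\mu}(v)$.

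The plus identity requires one additional step. The analogous computation yields
\[
\gpalpha{g}{\lambda}(\gpalpha{h}{\mu}(v))=\lambda(c^+(\mu,\theta)^*)\,\lambda(z_h^*)\,c^+(\lambda,\theta)^*\,z_g^*\,v,
\]
whereas Eq.~\eqref{braid_rel_gcbraiding1} produces $c^+(\lambda\mu,\theta)=c^+(\lambda,\gamma_{h'}(\theta))\cdot\lambda(c^+(\mu,\theta))$ with a $\gamma_{h'}$-twist on $\theta$, giving $\gpalpha{gh}{\lambda\mu}(v)=\lambda(c^+(\mu,\theta)^*)\,c^+(\lambda,\gamma_{h'}(\theta))^*\,z_{gh}^*\,v$. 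Matching the two sides thus reduces to showing $\lambda(z_h^*)\,c^+(\lambda,\theta)^*\,z_g^*=c^+(\lambda,\gamma_{h'}(\theta))^*\,z_{gh}^*$. Naturality Eq.~\eqref{naturality_gcbraiding} applied to the unitary $z_h\colon\gamma_{h'}(\theta)\to\theta$ yields $c^+(\lambda,\theta)\,\lambda(z_h)=\beta_{g'}(z_h)\,c^+(\lambda,\gamma_{h'}(\theta))$, i.e. $\lambda(z_h^*)\,c^+(\lambda,\theta)^*=c^+(\lambda,\gamma_{h'}(\theta))^*\,\beta_{g'}(z_h^*)$; substituting this and invoking the cocycle relation $z_{gh}=z_g\beta_{g'}(z_h)$ from Eq.~\eqref{eq:z_g_cocycle_eq} finishes the check.

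The minus case is essentially one application of the opposite braid relation. The harder piece is the plus case, where three separate ingredients---the $G$-crossed braid relation for $c^+$ (which introduces a $\gamma_{h'}$-twist), naturality of $c^+$ against the morphism $z_h$, and the cocycle identity satisfied by the $G$-equivariant structure $z$---must be assembled in the correct order so that the $\beta_{g'}(z_h)$ produced by naturality meets exactly the one demanded by the cocycle condition. The main bookkeeping challenge is keeping $\gamma_{h'}(\theta)$ in the right argument slot of $c^+$ throughout.
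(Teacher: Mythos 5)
Your proof is correct and uses essentially the same ingredients as the paper's: the $G$-crossed braid relation Eq.~\eqref{braid_rel_gcbraiding}/Eq.~\eqref{braid_rel_gcopbraiding}, naturality of $c^+$ applied to $z_h\colon\gamma_{h'}(\theta)\to\theta$, and the cocycle identity Eq.~\eqref{eq:z_g_cocycle_eq}, assembled in the same way. The only presentational difference is that the paper manipulates the defining $\Ad(\cdot)$-formula directly inside $\iotabar^{-1}\circ(\cdots)\circ\iotabar$, whereas you check equality on $\cB_\infty$ and on the generating element $v$ via Eq.~\eqref{eq:values_of_galpha_on_v}; the underlying algebra is identical.
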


\begin{proof}
The two formulas can be checked by direct computation.
For the first formula, we compute
\begin{align*}
\gpalpha{g}{\lambda}\circ \gpalpha{h}{\mu} &=
\iotabar^{-1}\circ \Ad(z_g c^+(\lambda, \theta))\circ \lambda \circ \Ad(z_h c^+(\mu, \theta))\circ \mu \iotabar \\
&= \iotabar^{-1}\circ \Ad(z_g c^+(\lambda, \theta)\lambda(z_h) \lambda(c^+(\mu, \theta)))\circ \lambda\mu \circ \iotabar \\
&= \iotabar^{-1}\circ \Ad(z_g \gamma_{g'}(z_h) c^+(\lambda, \gamma_h(\theta))\lambda(c^+(\mu, \theta)))\circ \lambda\mu \circ \iotabar \\
&= \iotabar^{-1}\circ \Ad(z_{gh} c^+(\lambda\mu, \theta))\circ \lambda\mu \circ \iotabar \\
&= \gpalpha{gh}{\lambda\mu},
\end{align*}
where we used the properties of braiding Eq.~\eqref{naturality_gcbraiding} and Eq.~\eqref{braid_rel_gcbraiding}, and Eq.~\eqref{eq:z_g_cocycle_eq}.
The second formula can be checked in a similar way by using the properties of the opposite braiding Eq.~\eqref{naturality_gcopbraiding} and Eq.~\eqref{braid_rel_gcopbraiding}.
\end{proof}

We next see the covariance property of induced endomorphisms.
\begin{proposition} \label{prop:covariance_galpha}
Let $g, k\in G$ and $\lambda\in \Delta_{\cB}^{g'}(I_0)$. Then we have 
\begin{align*}
\tilgamma_k(\gpalpha{g}{\lambda}) &= \gpalpha{kgk^{-1}}{\gamma_{k'}(\lambda)} \ , \\
\tilgamma_k(\gmalpha{\lambda}) &= \gmalpha{\gamma_{k'}(\lambda)} \ ,
\end{align*}
in $\End(\cA_\infty)$.
\end{proposition}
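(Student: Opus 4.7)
The plan is to exploit the characterization of induced endomorphisms stated just before the proposition: both $\gpalpha{g}{\lambda}$ and $\gmalpha{\lambda}$ are the unique homomorphisms on $\cA(I)$ (for $I\supset I_0$) extending $\iota\lambda$ and taking the prescribed values on $v$ given by Eq.~\eqref{eq:values_of_galpha_on_v}. Since $\cA(I)=\iota(\cB(I))v$, it suffices to verify the two identities after (a) restricting to $\iota(\cB_\infty)$ and (b) evaluating at $v$.

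For step (a), I would note that $\tilbeta_k\iota=\iota\beta_{k'}$ implies $\tilgamma_k(\gpalpha{g}{\lambda})\circ\iota = \tilbeta_k\circ\iota\lambda\circ\beta_{k'}^{-1} = \iota\circ\gamma_{k'}(\lambda)$, and similarly for the opposite version. This matches the restriction of $\gpalpha{kgk^{-1}}{\gamma_{k'}(\lambda)}$ and $\gmalpha{\gamma_{k'}(\lambda)}$ to $\iota(\cB_\infty)$.

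For step (b), the central computation is to show
\[
\tilgamma_k(\gpalpha{g}{\lambda})(v) = c^+(\gamma_{k'}(\lambda),\theta)^*\, z_{kgk^{-1}}^*\, v.
\]
Using $\tilbeta_{k^{-1}}(v)=z_{k^{-1}}^*v$ together with the cocycle identity~\eqref{eq:z_g_cocycle_eq} to rewrite $z_{k^{-1}}^*=\beta_{k'^{-1}}(z_k)$, I would compute $\gpalpha{g}{\lambda}(\tilbeta_k^{-1}(v))=\iota(\lambda(\beta_{k'^{-1}}(z_k)))\cdot c^+(\lambda,\theta)^*z_g^*v$, then apply $\tilbeta_k$ using $\tilbeta_k\iota=\iota\beta_{k'}$ and $\tilbeta_k(v)=z_k^*v$. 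This yields the expression
\[
\iota\bigl(\gamma_{k'}(\lambda)(z_k)\cdot\gamma_{k'}(c^+(\lambda,\theta))^*\cdot\beta_{k'}(z_g^*)\cdot z_k^*\bigr)\,v.
\]
Now I would use covariance~\eqref{covariance_gcbraiding} to replace $\gamma_{k'}(c^+(\lambda,\theta))$ by $c^+(\gamma_{k'}(\lambda),\gamma_{k'}(\theta))$, and then apply naturality~\eqref{naturality_gcbraiding} with the intertwiner $z_k\colon \gamma_{k'}(\theta)\to\theta$ (noting that $\gamma_{k'}(\lambda)\in\Delta_{\cB}^{(kgk^{-1})'}$) to obtain
\[
\gamma_{k'}(\lambda)(z_k)\cdot c^+(\gamma_{k'}(\lambda),\gamma_{k'}(\theta))^* = c^+(\gamma_{k'}(\lambda),\theta)^*\cdot\gamma_{(kgk^{-1})'}(z_k).
\]
Substituting this, everything outside $c^+(\gamma_{k'}(\lambda),\theta)^*$ collapses to $\gamma_{(kgk^{-1})'}(z_k)\beta_{k'}(z_g^*)z_k^*$, and the identity $z_{kgk^{-1}}=z_k\beta_{k'}(z_g)\beta_{(kgk^{-1})'}(z_k^*)$ (obtained by applying the cocycle relation~\eqref{eq:z_g_cocycle_eq} twice) identifies this with $z_{kgk^{-1}}^*$, finishing the first formula.

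For the second formula, the same scheme works verbatim: one only needs naturality~\eqref{naturality_gcopbraiding} and covariance~\eqref{covariance_gcopbraiding} of $c^-$, specialized to $\theta\in\Delta_{\cB}^e$ (so the $\gamma_{h^{-1}}$ in~\eqref{naturality_gcopbraiding} is trivial). The resulting identity $\gamma_{k'}(\lambda)(z_k)\cdot c^-(\gamma_{k'}(\lambda),\gamma_{k'}(\theta))^* = c^-(\gamma_{k'}(\lambda),\theta)^*\cdot z_k$ causes the $z_k$ and $z_k^*$ (no $z_g$ is present this time) to cancel, yielding $\tilgamma_k(\gmalpha{\lambda})(v)=\iota(c^-(\gamma_{k'}(\lambda),\theta)^*)v$. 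The main obstacle is the bookkeeping in step (b)—in particular, correctly tracking the grading so that the naturality square is applied with the right action $\gamma_{(kgk^{-1})'}$, and then matching the resulting product of $z$'s against $z_{kgk^{-1}}^*$ via the cocycle relation; everything else is a direct consequence of the definitions together with $\cA(I)=\iota(\cB(I))v$ and Remark~\ref{remark:subfactor_expansion_uniqueness}.
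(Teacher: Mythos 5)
Your proposal is correct and follows essentially the same strategy as the paper's own proof: reduce to checking the identity on $\cB$ and on $v$, then use the cocycle identity together with naturality and covariance of the $G$-crossed braiding to identify $\tilgamma_k(\gpalpha{g}{\lambda})(v)$ with $c^+(\gamma_{k'}(\lambda),\theta)^* z_{kgk^{-1}}^* v$ (and similarly for the minus version, where the $z$'s cancel). The only superficial difference is that you apply $\tilbeta_k$ before invoking naturality (using $z_k$) whereas the paper applies naturality first (using $z_{k^{-1}}$) and then $\tilbeta_k$; these are the same two moves in the opposite order, and the intermediate expressions agree.
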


\begin{proof}
Let $I\in \cJ$ such that $I\supset I_0$. We will see that the two formulas hold on $\cA(I)$. Since $\gpalpha{g}{\lambda}$ and $\gmalpha{\lambda}$ act as $\lambda$ on $\cB(I)$, it is easily seen that the desired formulas hold on $\cB(I)$. 
Thus it suffices to see that $\tilgamma_k(\gpalpha{g}{\lambda})(v)= \gpalpha{kgk^{-1}}{\gamma_{k'}(\lambda)}(v)$ and $\tilgamma_k(\gmalpha{\lambda})(v) = \gmalpha{\gamma_{k'}(\lambda)}(v)$, because $\cA(I) = \cB(I)v$. 

We first see the equality for the first one. The left-hand side is computed as
\begin{align*}
\tilgamma_k(\gpalpha{g}{\lambda})(v) &= \tilbeta_k\circ \gpalpha{g}{\lambda}\circ\tilbeta_k^{-1}(v) \\
&= \tilbeta_k\circ \gpalpha{g}{\lambda}(z_{k^{-1}}^* v) \\
&= \tilbeta_k(\lambda(z_{k^{-1}}^*)c^+(\lambda, \theta)^* z_g^*v) \\
&= \tilbeta_k(c^+(\lambda, \gamma_{{k'}^{-1}}(\theta))^* \beta_{g'}(z_{k^{-1}}^*) z_g^*v) \\
&= c^+(\gamma_{k'}(\lambda), \theta)^* \beta_{k'g'}(z_{k^{-1}}^*)\beta_{k'}(z_g^*)z_k^* v \ ,
\end{align*}
where we used the definition of $z$, Eq.~\eqref{eq:values_of_galpha_on_v}, the properties of braiding Eq.~\eqref{naturality_gcbraiding} and Eq.~\eqref{covariance_gcbraiding}. On the other hand, the right-hand side is
\begin{equation*}
\gpalpha{kgk^{-1}}{\gamma_{k'}(\lambda)}(v) = c^+(\gamma_{k'}(\lambda), \theta)^* z_{kgk^{-1}}^* v \ .
\end{equation*}
Since $z_{kgk^{-1}} = z_k\beta_{k'}(z_{gk^{-1}}) = z_k\beta_{k'}(z_g\beta_{g'}(z_{{k'}^{-1}})) = z_k\beta_{k'}(z_g)\beta_{k'g'}(z_{{k'}^{-1}})$ by using Eq.~\eqref{eq:z_g_cocycle_eq} repeatedly, we get the desired equality.

We next see the equality for the second one. The left-hand side is computed as:
\begin{align*}
\tilgamma_k(\gmalpha{\lambda})(v) &= \tilbeta_k\circ \gmalpha{\lambda}\circ \tilbeta_k^{-1}(v) \\
&= \tilbeta_k\circ \gmalpha{\lambda}(z_{k^{-1}}^*v) \\
&= \tilbeta_k(\lambda(z_{k^{-1}}^*)c^-(\lambda, \theta)^* v) \\
&= \tilbeta_k(c^-(\lambda, \gamma_{{k'}^{-1}}(\theta))^* z_{k^{-1}}^* v) \\
&= c^-(\gamma_{k'}(\lambda), \theta)^* \beta_{k'}(z_{k^{-1}}^*) z_k^* v \\
&= c^-(\gamma_{k'}(\lambda), \theta)^* v,
\end{align*}
where we used Eq.~\eqref{eq:characterization_z_g}, Eq.~\eqref{eq:values_of_galpha_on_v}, the properties of opposite braiding Eq.~\eqref{naturality_gcopbraiding} and Eq.~\eqref{covariance_gcopbraiding}. Moreover, 
we used $1 = z_{k' {k'}^{-1}} = z_{k'}\beta_{k'}(z_{{k'^{-1}}})$ in the last line. On the other hand, the right-hand side is $\gmalpha{\tilgamma_{k'}(\lambda)}(v) = c^-(\tilgamma_{k'}(\lambda), \theta)^*v$ by Eq.~\eqref{eq:values_of_galpha_on_v} and we also get the desired equality.
\end{proof}

We then see the localization properties of induced endomorphisms. The following proposition says that $\gpalpha{g}{\lambda}$ is $g$-localized in a left half-line and $\gmalpha{\lambda}$ is localized in a right half-line.

\begin{proposition}~\label{prop:galpha_localizable}
Let $\lambda\in \Delta_{\cB}^{g'}(I_0)$ and $I\in \cJ$.
\begin{enumerate}
    \item If $I > I_0$, then we have $\gpalpha{g}{\lambda}(m) = \tilbeta_g(m)$ for all $m\in \cA(I)$.
    \item If $I < I_0$, then we have $\gmalpha{\lambda}(m) = m$ for all $m\in \cA(I)$.
\end{enumerate}
\end{proposition}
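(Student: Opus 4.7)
The plan is to reduce both statements, which concern $m \in \cA(I)$ for an interval $I$ disjoint from $I_0$, to the case where $m$ already lies in $\cB(I)$, after which the conclusions follow from the intertwining properties of the $G'$-crossed braiding and of the cocycle $z_g$. The first step is the localization: by locality one has $m \in \cA(I_0)'\cap \cA_\infty \subset \cB(I_0)'\cap \cA_\infty$, so Proposition~\ref{prop:standard_net_of_subfactors_and_conjugate}(ii) gives $\iotabar(m)=m$, which in particular places $m$ in $\cB_\infty$. Since $m$ also commutes with $\cB(J)\subset \cA(J)$ for every $J\in \cJ$ disjoint from $I$, Haag duality on $\RR$ for the strongly additive net $\cB$ then forces $m\in \cB(I)$.

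Case (ii) is then the easier one: for $m\in \cB(I)$ with $I<I_0$, both $\lambda$ and $\theta$ act as the identity on $m$, so the intertwining property of $c^-(\lambda,\theta)\in\Hom(\lambda\theta, \theta\lambda)$ immediately yields $[c^-(\lambda,\theta), m]=0$, and hence $\gmalpha{\lambda}(m) = \iotabar^{-1}(m) = m$. For case (i), with $m\in \cB(I)$ and $I>I_0$, one has $\theta(m)=m$ and $\lambda(m)=\beta_{g'}(m)$; the intertwining property of $c^+(\lambda,\theta)\in \Hom(\lambda\theta, \gamma_{g'}(\theta)\lambda)$ combined with the identity $\gamma_{g'}(\theta)\circ\beta_{g'}=\beta_{g'}\circ\theta$ then shows that $c^+(\lambda,\theta)$ commutes with $\beta_{g'}(m)$. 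Substituting into the definition and invoking Eq.~\eqref{eq:z_g_intertwining_property} together with $\iotabar(m)=m$ collapses the computation to $\gpalpha{g}{\lambda}(m)=\iotabar^{-1}(z_g\,\beta_{g'}(m)\,z_g^*) = \iotabar^{-1}(\iotabar\,\tilbeta_g(m)) = \tilbeta_g(m)$.

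The main point where care is needed is the localization step itself, since $m\in\cB(I)$ is not immediate from $m\in\cA(I)$; combining Proposition~\ref{prop:standard_net_of_subfactors_and_conjugate}(ii) with Haag duality for $\cB$ is the clean way to achieve this. Once that reduction is in hand, both computations proceed in very close parallel to the standard proofs for ordinary $\alpha^\pm$-induction, the only genuinely new ingredients being the cocycle $z_g$ and the twist $\beta_{g'}$ appearing in case (i).
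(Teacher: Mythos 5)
The localization step at the start of your argument is a genuine gap, and it is what the rest of the proof rests on. You claim that for $m\in\cA(I)$ with $I$ disjoint from $I_0$, Proposition~\ref{prop:standard_net_of_subfactors_and_conjugate}(ii) forces $\iotabar(m)=m$ and hence $m\in\cB_\infty$, and then Haag duality forces $m\in\cB(I)$. But this would say $\cA(I)\subset\cB(I)$ for every $I$ disjoint from $I_0$, which is impossible as soon as $[\cA:\cB]>1$: for the orbifold pair $\cB=\cA^G\subset\cA$, any ``twist field'' $\psi\in\cA(I)$ that transforms nontrivially under $G$ lies in $\cB(I_0)'\cap\cA_\infty$ by locality, yet it is not in $\cB_\infty$. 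Concretely, $\iotabar(\cA(J))\subset\cB(J)$ is a proper inclusion of index $[\cA:\cB]$ for $J\supset I_0$, so $\iotabar$ cannot act identically on the full relative commutant $\cB(I_0)'\cap\cA_\infty$; the paper's own proof of this proposition never invokes $\iotabar(m)=m$, only $\iotabar(m)\in\cB(J)$ for a suitable large $J$. (The statement of Proposition~\ref{prop:standard_net_of_subfactors_and_conjugate}(ii), as written, appears to conflate $\cB(I_0)'\cap\cA_\infty$ with $\cB(I_0)'\cap\cB_\infty$; the latter is the correct domain on which $\iotabar$ reduces to $\theta$ and acts trivially.)

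There is also a second, independent obstruction to your abstract route in case (i), even after replacing $m$ by $\iotabar(m)\in\cB(J)$: you need $\lambda(\iotabar(m))=\beta_{g'}(\iotabar(m))$, but $\iotabar(m)$ lies in $\cB(J)$ for an interval $J$ containing $I_0$, not strictly to the right of it, so the $g'$-localization of $\lambda$ in $I_0$ does not apply directly. This is precisely why the paper's proof passes to the explicit form of the crossed braiding: one transports $\lambda$ to $\tillambda\in\Delta_\cB^{g'}(I_-)$ with $I_-<I_0$ via a unitary $U$, writes $c^+(\lambda,\theta)=\gamma_{g'}(\theta)(U^*)U$, uses that $\tillambda$ acts as $\beta_{g'}$ on $\cB(J)$ with $J>I_-$, and collapses the result to $\Ad(\iota(U^*))\circ\tilbeta_g(m)$, which equals $\tilbeta_g(m)$ by locality of $\cA$. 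The intertwining identities for $z_g$ and the braiding that you invoke do eventually come into play, but only after this transport, so the ``explicit braiding formula'' route is not optional here.
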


\begin{proof}
(i) We use an explicit form of $c^+(\lambda, \theta)$ as in Eq.~\eqref{eq:def_of_GLoc_braiding}. Pick an interval $I_-\in\cJ$ with $I_- < I_0$ and a unitary $U\in \cB_\infty$ such that $\tillambda = \Ad(U)\circ \lambda\in \Delta_{\cB}^{g'}(I_-)$. Then $c^+(\lambda, \theta)$ is given by $c^+(\lambda, \theta) = \gamma_{g'}(\theta)(U^*)\cdot U$. We note that $U\in A(\widetilde{J})$ as long as $\widetilde{J} \supset I_-\cup I_0$.

Let $I > I_0$ and $m\in \cA(I)$. Pick an interval $J\in \cJ$ such that $I\cup I_0 \subset J$ and $I_- < J$. Then we have $\iotabar(m)\in \cA(J)$. Since $\tillambda$ is $g$-localized in $I_-$, we have $\tillambda\circ \iotabar(m) = \beta_{g'}\circ\iotabar(m)$. Hence we compute
\begin{align*}
\gpalpha{g}{\lambda}(m) &= \iotabar^{-1}\circ \Ad(z_g c^+(\lambda, \theta))\circ\lambda\circ \iotabar(m) \\
&= \iotabar^{-1}\circ \Ad(z_g \cdot \gamma_{g'}(\theta)(U^*)\cdot U) \circ \lambda\circ \iotabar(m) \\
&= \iotabar^{-1}\circ \Ad(z_g\cdot \gamma_{g'}(\theta)(U^*))\circ \tillambda\circ \iotabar(m)\\
&= \iotabar^{-1}\circ \Ad(z_g\cdot \gamma_{g'}(\theta)(U^*))\circ \beta_{g'}\circ \iotabar(m) \\
&= \iotabar^{-1}\circ \Ad(\theta(U^*)z_g)\circ \beta_{g'}\circ \iotabar(m) \\
&= \iotabar^{-1}\circ \Ad(\theta(U^*))\circ\iotabar\circ \tilbeta_g(m)\\
&= \Ad(\iota(U^*))\circ \tilbeta_g(m) \ .
\end{align*}
Since $\iota(U^*)$ and $\tilbeta_g(m)\in \cA(I)$ commute by the locality of $\cA$, we get $\gpalpha{g}{\lambda}(m) = \tilbeta_g(m)$.
\smallskip

(ii) As in the proof of (i), we use an explicit form of $c^-(\lambda, \theta) = c^+(\theta, \lambda)^*$. By Lemma~\ref{lemma:GLoc_braiding_another_formula}, $c^+(\theta, \lambda)$ is given as follows. Taking an interval $I_+\in \cJ$ with $I_+ > I_0$ and a unitary $u\in \cB_\infty$ such that $\tillambda = \Ad(u)\circ \lambda\in \Delta_{\cB}^{g'}(I_+)$, we have $c^+(\theta, \lambda) = U^*\theta(U)$. Thus we get $c^-(\lambda, \theta) = \theta(U^*)U$. Using this formula, one can check the statement as in the proof of (i).
\end{proof}

Moreover, two induced endomorphisms are localizable in any half-line as below. Let us fix an arbitrary interval $\widetilde{I}\in \cJ$. Starting from $\widetilde{I}$, we choose a homomorphism $\iotabar'\colon \cA_\infty\to \cB_\infty$ and $(w', v')$ as in Proposition~\ref{prop:standard_net_of_subfactors_and_conjugate}. Let $u\in \Hom(\iotabar, \iotabar')$ be a unitary as in the same proposition. Then $\Theta=(\theta'=\iotabar'\iota, w', x'=\iotabar'(v'))$ is a Q-system equivalent to $\Theta$ via unitary isomorphism $u=u\otimes 1_\iota\in \Hom(\theta, \theta')$. We denote by $z'$ the corresponding $G$-equivariant structure.

Let $g\in G$ and $\lambda\in \Delta_{\cB}^{g'}(I_0)$. By the transportability of $\lambda$, there exists a unitary $U\in \cB_\infty$ such that $\tilde{\lambda} = \Ad(U)\circ \lambda \in \Delta_{\cB}^{g'}(\widetilde{I})$. We temporarily denote by $\tilde{\alpha}_{\tilde{\lambda}}^{g;+}$ and $\tilde{\alpha}_{\tilde{\lambda}}^-$ the induced endomorphisms defined in terms of $\theta'$ and $z'$.
\begin{lemma} \label{lemma:galpha_transportability}
Under the notations as above, $\gpalpha{g}{\lambda}$ (resp. $\gmalpha{\lambda}$) and $\tilde{\alpha}_{\tilde{\lambda}}^{g;+}$ (resp. $\tilde{\alpha}_{\tilde{\lambda}}^-$) are unitarily equivalent in $\End(\cA_\infty)$.
\end{lemma}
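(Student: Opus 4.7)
My plan is to reduce the claim to two independent observations: first, that the induction $\tilde\alpha^{g;+}_{\tilde\lambda}$ defined via $(\iotabar',z')$ is equal, as an endomorphism of $\cA_\infty$, to the one defined by the \emph{same formula} but with the old data $(\iotabar,z)$ applied to $\tilde\lambda$; and second, that the unitary $\iota(U)\in\cA_\infty$ then implements a unitary equivalence between $\gpalpha{g}{\lambda}$ and this rewritten version. Throughout, the key input is the equivariance of the Q-system isomorphism $u\in\Hom(\iotabar,\iotabar')$: we have both $\iotabar'=\Ad(u)\circ\iotabar$ and, by the last part of Lemma~\ref{lemma:G_equivariant_str_on_Q_system}, the explicit formula $z'_g = u z_g \beta_{g'}(u^*)$ for the new equivariant structure.

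For the first step, I would expand the definition of $\tilde\alpha^{g;+}_{\tilde\lambda}(m)$ using $(\iotabar')^{-1}(\cdot) = \iotabar^{-1}(u^*(\cdot)u)$, and push the $u$ and $u^*$ factors inward through $\tilde\lambda$ and through the $G'$-crossed braiding. The crucial naturality identity, coming from $u\in\Hom(\theta,\theta')$ together with $u\in\cB_\infty$, is
\[
c^+(\tilde\lambda, \theta')\,\tilde\lambda(u) \;=\; \gamma_{g'}(u)\cdot c^+(\tilde\lambda, \theta) \;=\; \beta_{g'}(u)\cdot c^+(\tilde\lambda, \theta).
\]
Combining this with $z'_g = u z_g \beta_{g'}(u^*)$ causes all $u$ factors to cancel, yielding the identity $\tilde\alpha^{g;+}_{\tilde\lambda} = \iotabar^{-1}\circ\Ad(z_g\, c^+(\tilde\lambda,\theta))\circ\tilde\lambda\circ\iotabar$.

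For the second step, since $\cA(I) = \cB(I)v$ for any $I\supset I_0$, it suffices to verify the intertwining relation $\iota(U)\cdot\gpalpha{g}{\lambda}(m) = \tilde\alpha^{g;+}_{\tilde\lambda}(m)\cdot \iota(U)$ on $\iota(\cB_\infty)$ and on $v$. The former is immediate, since both sides reduce to $\iota(U\lambda(n)) = \iota(\tilde\lambda(n)U)$ via $U\in\Hom(\lambda,\tilde\lambda)$. For $m=v$ I would use the Frobenius identity $v\iota(U) = \theta(U)\,v$ together with the naturality $c^+(\tilde\lambda,\theta)\,U = \gamma_{g'}(\theta)(U)\, c^+(\lambda,\theta)$; the remaining identity then reduces to $\gamma_{g'}(\theta)(U^*)\, z_g^* = z_g^*\, \theta(U^*)$, which is exactly the intertwining property $z_g\in \Hom(\gamma_{g'}(\theta),\theta)$.

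The statement for $\gmalpha{\lambda}\cong\tilde\alpha^-_{\tilde\lambda}$ follows from the same template with $c^-$ replacing $c^+$ and all $z_g$ factors absent; the relevant naturality relations are Eq.~\eqref{naturality_gcopbraiding} and Eq.~\eqref{braid_rel_gcopbraiding}. I expect the main obstacle to be the bookkeeping in the first step—simultaneously juggling the change-of-Q-system unitary $u$, the induced change $z\leadsto z'$ of equivariant structure, and the naturality of the $G'$-crossed braiding—while the second step is then a short direct calculation essentially parallel to the standard argument that $[\alpha^\pm_\lambda]$ depends only on $[\lambda]$ in ordinary $\alpha$-induction.
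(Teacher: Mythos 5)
Your proposal is correct and follows essentially the same path as the paper. The paper's proof carries out the same ingredients—the identity $z'_g\gamma_{g'}(u)=uz_g$ from Lemma~\ref{lemma:G_equivariant_str_on_Q_system}, the naturality~\eqref{naturality_gcbraiding} of the $G'$-crossed braiding applied to both $u\in\Hom(\theta,\theta')$ and $U\in\Hom(\lambda,\tilde\lambda)$, and the intertwining property $z_g\in\Hom(\gamma_{g'}(\theta),\theta)$—in a single algebraic computation showing $u^* z_g' c^+(\tilde\lambda,\theta') U\lambda(u)=\theta(U)z_g c^+(\lambda,\theta)$ and hence $\tilde\alpha^{g;+}_{\tilde\lambda}=\Ad(\iota(U))\circ\gpalpha{g}{\lambda}$; your two-step decomposition (independence of the choice of $(\iotabar,z)$, then transport along $U$) is the same calculation in a slightly different packaging.
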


\begin{proof}
We only show the plus case. The minus case can be checked in the similar way. By Lemma~\ref{lemma:G_equivariant_str_on_Q_system}, we get $z_g'\gamma_{g'}(u) = uz_g$ and we have
\begin{align*}
u^* z_g'c^+(\tilde{\lambda}, \theta') U\lambda(u) &= u^* u z_g \gamma_{g'}(u^*)c^+(\tilde{\lambda}, \theta') U\lambda(u) \\
&= z_g \cdot \gamma_{g'}(\theta)(U)\cdot c^+(\lambda, \theta) \\
&= \theta(U)z_gc^+(\lambda, \theta)
\end{align*}
where we used the naturality of braiding Eq.~\eqref{naturality_gcbraiding} in the second line and the intertwining property of $z_g$ in the third line.
Using this, we have
\begin{align*}
\tilde{\alpha}_{\tilde{\lambda}}^{g;+} &= {\iotabar'}^{-1}\circ \Ad(z_g' c^+(\tilde{\lambda}, \theta'))\circ \tilde{\lambda} \circ \iotabar' \\
&= \iotabar^{-1} \circ \Ad(u^* z_g'c^+(\tilde{\lambda}, \theta') U\lambda(u))\circ \lambda\circ \iotabar \\
&= \iotabar\circ \Ad(\theta(U)z_g c^+(\lambda, \theta))\circ \lambda\circ \iotabar \\
&= \Ad(\iota(U))\circ \gpalpha{g}{\lambda},
\end{align*}
which completes the proof.
\end{proof}

\subsection{Intertwiner spaces of induced endomorphisms} \label{subsection:ind_for_twisted_reps_intertwiners}
We now study the intertwiner spaces between induced endomorphisms using essentially the same strategy as~\cite{BE1}. By the locality of $\cA$, we have the following lemma~\cite{LR95} (also cf.~\cite[Lemma 3.4]{BE1}), which implies the commutativity of the Q-system $\Theta=(\theta, w, x=\iotabar(v))$.
\begin{lemma} \label{lemma:locality_of_Q_system_net_of_subfactor}
We have
\begin{align*}
c^+(\theta, \theta)\iotabar(v) &= c^-(\theta, \theta)\iotabar(v) = \iotabar(v), \\
c^+(\theta, \theta)v^2 &= c^-(\theta, \theta)v^2 = v^2.
\end{align*}
\end{lemma}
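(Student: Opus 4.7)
The plan is first to reduce the four assertions to a single identity, and then to prove the reduced identity by transporting one copy of $\theta$ to a disjoint interval and invoking locality of $\cA$.

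The first reduction: since both tensor factors $\theta \in \Delta^e_\cB(I_0)$ lie in the identity component of $\GpLoc\cB$, the $G'$-crossed braiding restricted there coincides with the ordinary DHR braiding (cf.~the remark following Eq.~\eqref{eq:def_of_GLoc_braiding}), so $c^-(\theta,\theta) = c^+(\theta,\theta)^*$ and the plus and minus statements are equivalent. Moreover, using $va = \gamma(a)v$ with $a = v$, I have $v^2 = \iota(\iotabar(v))v = \iota(x)v = xv$; since $c^{\pm}(\theta,\theta) \in \cB_\infty$, one computes $c^{\pm}(\theta,\theta)v^2 = (c^{\pm}(\theta,\theta)x)v$, which by Remark~\ref{remark:subfactor_expansion_uniqueness} equals $v^2 = xv$ if and only if $c^{\pm}(\theta,\theta)x = x$. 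So it is enough to prove $c^+(\theta,\theta)\iotabar(v) = \iotabar(v)$.

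For this, I would pick an interval $I_+ > I_0$ and, via Proposition~\ref{prop:standard_net_of_subfactors_and_conjugate}, obtain a second conjugate $\iotabar'$ based on $I_+$ with standard solution $(w', v') \in \cB(I_+) \times \cA(I_+)$, together with the comparison unitary $u \in \Hom(\iotabar, \iotabar')$ satisfying $v' = \iota(u)v = uv$. The induced $u = u\otimes 1_\iota \in \Hom(\theta, \theta')$ transports $\theta$ to $\theta' = \iotabar'\iota \in \Delta^e_\cB(I_+)$, so Lemma~\ref{lemma:GLoc_braiding_another_formula} applied with $\rho = \sigma = \theta$, $g = e$ gives $c^+(\theta,\theta) = u^*\theta(u)$. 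The decisive input is locality of $\cA$: since $v' \in \cA(I_+)$ and $v \in \cA(I_0)$ lie in disjoint intervals, $v'v = vv'$. Substituting $v' = uv$ and using $vu = \iota(\iotabar(u))v = \theta(u)v$ (valid for $u \in \cB_\infty$), this becomes $uv^2 = \theta(u)v^2$, hence $(u - \theta(u))v^2 = 0$. Replacing $v^2 = xv$ and invoking Remark~\ref{remark:subfactor_expansion_uniqueness} for $(u - \theta(u))x \in \cB_\infty$ yields $(u - \theta(u))x = 0$, so $\theta(u)x = ux$, and therefore $c^+(\theta,\theta)x = u^*\theta(u)x = u^*ux = x$.

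The main obstacle is conceptual rather than computational: one must recognize that commutativity of $\Theta$ is the precise manifestation of locality of $\cA$ inside the braided category $\GpLoc\cB$. The trick is to test the desired identity at the level of $v^2 \in \cA(I_+)$, where locality of $\cA$ applies directly, and then transfer the conclusion back to $x \in \cB(I_0)$ using the faithfulness of right multiplication by $v$ (Remark~\ref{remark:subfactor_expansion_uniqueness}); once this is set up, the calculation is short.
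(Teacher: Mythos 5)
Your proof is correct, and it takes the same approach the paper delegates to the literature (the paper proves nothing here, just cites \cite{LR95} and \cite[Lemma 3.4]{BE1}, whose argument is precisely locality of $\cA$ together with a transporter for $\theta$ to a disjoint interval). The reduction $v^2 = xv$, the equivalence via Remark~\ref{remark:subfactor_expansion_uniqueness}, and the choice of the comparison unitary $u$ from Proposition~\ref{prop:standard_net_of_subfactors_and_conjugate} as the transporter (so that $v' = uv$ lands in $\cA(I_+)$ and locality applies verbatim) are exactly the classical ingredients; the only cosmetic remark is that $c^-(\theta,\theta) = c^+(\theta,\theta)^*$ is immediate from the definition of $c^-$ on the identity component and unitarity, without needing to invoke the identification with the DHR braiding.
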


In the subsequent analysis, we temporarily view $\gpalpha{g}{\lambda}$ and $\gmalpha{\lambda}$ as elements of $\End(\cA(I_0))$ and consider local intertwiner spaces. We denote by $\Hom_{\cB(I_0), \cA(I_0)}(\iota\lambda, \iota\mu)$ the space of local intertwiners $\{t\in \cA(I_0) : t\iota\lambda(n) = \iota\mu(n)t \ \text{for all} \ n\in \cB(I_0)\}$. The following theorem generalize~\cite[Theorem 3.9]{BE1}.

\begin{theorem} \label{thm_galpha_intertwiner}
Let $g\in G$ and $\lambda, \mu\in \Delta_{\cB}^{g'}(I_0)$. Then we have
\begin{enumerate}
\item $\Hom_{\cA(I_0)}(\gpalpha{g}{\lambda}, \gpalpha{g}{\mu}) = \Hom_{\cB(I_0), \cA(I_0)}(\iota\lambda, \iota\mu)$.
\item $\Hom_{\cA(I_0)}(\gmalpha{\lambda}, \gmalpha{\mu})= \Hom_{\cB(I_0), \cA(I_0)}(\iota\lambda, \iota\mu)$.
\end{enumerate}
In particular, we have
\begin{equation*}
\langle \gpalpha{g}{\lambda}, \gpalpha{g}{\mu} \rangle_{\cA(I_0)} = \langle \theta\lambda, \mu \rangle_{\cB(I_0)} = \langle \gmalpha{\lambda}, \gmalpha{\mu} \rangle_{\cA(I_0)}.
\end{equation*}
\end{theorem}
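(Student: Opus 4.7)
The plan is to adapt the argument of \cite[Theorem 3.9]{BE1} to the $G$-equivariant setting. The inclusion $\subset$ in both (i) and (ii) is immediate since $\gpalpha{g}{\lambda}\circ\iota = \iota\lambda$ and $\gpalpha{g}{\mu}\circ\iota = \iota\mu$ (and analogously for $-$): any $t\in\Hom_{\cA(I_0)}(\gpalpha{g}{\lambda},\gpalpha{g}{\mu})$ satisfies $t\iota\lambda(n) = \iota\mu(n)t$ for every $n\in\cB(I_0)$. For the reverse inclusion in (i), I would use the decomposition $\cA(I_0) = \cB(I_0)v$ from Eq.~\eqref{subfactor_basis_expansion} to write any $t\in\Hom_{\cB(I_0),\cA(I_0)}(\iota\lambda,\iota\mu)$ as $t = nv$ with $n\in\cB(I_0)$. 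Combining the local intertwining relation with $v\iota(m) = \iota\theta(m)v$ and Remark~\ref{remark:subfactor_expansion_uniqueness} forces $n\in\Hom(\theta\lambda,\mu)$; by Lemma~\ref{lemma_intertwiners_GLoc}(i) this is a global intertwiner space, since $\theta\lambda$ and $\mu$ are both $g'$-localized in $I_0$. Since the intertwining already holds on $\iota(\cB(I_0))$, it suffices to verify it on $v$; substituting Eq.~\eqref{eq:values_of_galpha_on_v}, using $v^2 = xv$, and applying Remark~\ref{remark:subfactor_expansion_uniqueness} once more to cancel the trailing $v$, the problem reduces to showing the algebraic identities
\begin{align*}
n\theta(c^+(\lambda,\theta)^*z_g^*)x &= c^+(\mu,\theta)^*z_g^*\theta(n)x, \\
n\theta(c^-(\lambda,\theta)^*)x &= c^-(\mu,\theta)^*\theta(n)x,
\end{align*}
in $\cB(I_0)$, for any $n\in\Hom(\theta\lambda,\mu)$.

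To prove the first identity, I would apply naturality Eq.~\eqref{naturality_gcbraiding} with $s = n^*\colon\mu\to\theta\lambda$ and $t = 1_\theta$ to obtain $c^+(\theta\lambda,\theta)n^* = \gamma_{g'}(\theta)(n^*)c^+(\mu,\theta)$; taking adjoints, multiplying by $z_g^*$ on the right, and invoking $\gamma_{g'}(\theta)(n)z_g^* = z_g^*\theta(n)$ (a direct consequence of $z_g\in\Hom(\gamma_{g'}(\theta),\theta)$, Lemma~\ref{lemma:G_equivariant_str_on_Q_system}(iii)) yields $n\,c^+(\theta\lambda,\theta)^*z_g^* = c^+(\mu,\theta)^*z_g^*\theta(n)$. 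Separately, the hexagon Eq.~\eqref{braid_rel_gcbraiding1} and the naturality of $c^+(\theta,-)$ applied to $z_g\colon\gamma_{g'}(\theta)\to\theta$ combine to give $z_gc^+(\theta\lambda,\theta) = c^+(\theta,\theta)\theta(z_gc^+(\lambda,\theta))$, whence $c^+(\theta\lambda,\theta)^*z_g^* = \theta(c^+(\lambda,\theta)^*z_g^*)c^+(\theta,\theta)^*$. Substituting this expression and multiplying by $x$ on the right, the leftover factor $c^+(\theta,\theta)^*x$ equals $x$ by commutativity of $\Theta$ (Lemma~\ref{lemma:locality_of_Q_system_net_of_subfactor}, together with $c^+(\theta,\theta)^* = c^-(\theta,\theta)$), which completes the proof of the $+$ identity. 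The $-$ case is completely analogous, and in fact simpler since no $z_g$ appears, via Eq.~\eqref{naturality_gcopbraiding}, Eq.~\eqref{braid_rel_gcopbraiding1}, and $c^-(\theta,\theta)^*x = x$.

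The dimension formula is then obtained from Frobenius reciprocity for the conjugate pair $(\iota,\iotabar)$:
\[ \langle\gpalpha{g}{\lambda},\gpalpha{g}{\mu}\rangle_{\cA(I_0)} = \dim\Hom_{\cB(I_0),\cA(I_0)}(\iota\lambda,\iota\mu) = \langle\iota\lambda,\iota\mu\rangle = \langle\lambda,\theta\mu\rangle = \langle\theta\lambda,\mu\rangle, \]
where the last equality uses self-conjugacy of $\theta=\iotabar\iota$. The main obstacle is the algebraic identity in the $+$ case: the novel ingredient relative to the classical $\alpha$-induction of \cite{BE1} is the presence of $z_g$, which must be threaded consistently through the hexagon decomposition and the naturality relations in such a way that the factor $\gamma_{g'}(\theta)$ produced by naturality of $c^+$ combines with $z_g^*$ to yield $z_g^*\theta$; this is precisely where the $G$-equivariance of the Q-system $\Theta$ plays its essential role.
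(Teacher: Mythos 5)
Your proof is correct and follows essentially the same route as the paper: the same key ingredients appear (the hexagon-plus-naturality identity $z_gc^+(\theta\lambda,\theta)=c^+(\theta,\theta)\theta(z_gc^+(\lambda,\theta))$, the intertwining property of $z_g$, the commutativity $c^+(\theta,\theta)^*x=x$ from Lemma~\ref{lemma:locality_of_Q_system_net_of_subfactor}, and Frobenius reciprocity for the dimension count). The only cosmetic difference is in the reduction step: you write $t=\iota(n)v$ with $n\in\Hom(\theta\lambda,\mu)$ and cancel the trailing $v$ via Remark~\ref{remark:subfactor_expansion_uniqueness}, whereas the paper sets $s=\iotabar(t)\in\Hom(\theta\lambda,\theta\mu)$ and applies $\iotabar$ to both sides of $t\,\gpalpha{g}{\lambda}(v)=\gpalpha{g}{\mu}(v)\,t$; the resulting identities in $\cB(I_0)$ are equivalent and handled by the same chain of manipulations.
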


\begin{proof}
(i) Since $\gpalpha{g}{\lambda}$ and $\gpalpha{g}{\mu}$ act as $\lambda$ and $\mu$ on $\cB(I_0)$ respectively, we have 
\[
\Hom_{\cA(I_0)}(\gpalpha{g}{\lambda}, \gpalpha{g}{\mu}) \subset \Hom_{\cB(I_0), \cA(I_0)}(\iota\lambda, \iota\mu).
\]
Let $t\in \Hom_{\cB(I_0), \cA(I_0)}(\iota\lambda, \iota\mu)$. We set $s:= \iotabar(t)\in \Hom_{\cB(I_0)}(\theta\lambda, \theta\mu)$. Since $\cA(I_0) = \cB(I_0)v$, it suffices to show that $t\gpalpha{g}{\lambda}(v) = \gpalpha{g}{\mu}(v)t$.
Applying $\iotabar$ and by Eq.~\eqref{eq:values_of_galpha_on_v}, it is equivalent to the equality $s\theta(c^+(\lambda, \theta)^* z_g^*)\iotabar(v) = \theta(c^+(\mu, \theta)^* z_g^*)\iotabar(v) s$. By the property of braiding Eq.~\eqref{naturality_gcbraiding} and Eq.~\eqref{braid_rel_gcbraiding} and the intertwining property of $z_g$, we have
\begin{align*}
c^+(\theta, \theta)\theta(z_g c^+(\lambda, \theta)) &= z_g c^+(\theta, \gamma_{g'}(\theta))\theta(c^+(\lambda, \theta)) \\
&= z_g c^+(\theta\lambda, \theta).
\end{align*}
Using this formula and Lemma \ref{lemma:locality_of_Q_system_net_of_subfactor}, we now compute
\begin{align*}
s\theta(c^+(\lambda, \theta)^* z_g^*) \iotabar(v) &= s\theta(c^+(\lambda, \theta)^*z_g^*)c^+(\theta, \theta)^*\iotabar(v) \\
&= s c^+(\theta\lambda, \theta)^* z_g^* \iotabar(v) \\
&= c^+(\theta\lambda, \theta)^* \gamma_{g'}(\theta)(s) z_g^* \iotabar(v) \\
&= c^+(\theta\lambda, \theta)^* z_g^* \theta(s)\iotabar(v) \\
&= c^+(\theta\lambda, \theta)^* z_g^* \iotabar(v) s \\
&= \theta(c^+(\mu, \theta)^* z_g^*)c^+(\theta, \theta)^* \iotabar(v) s \\
&= \theta(c^+(\mu, \theta)z_g^*)\iotabar(v) s,
\end{align*}
and we get the desired equality, which proves the statement.

\smallskip

(ii) Using the properties of opposite braiding, one can prove the statement as in the proof of~\cite[Lemma 3.5]{BE1} (or the proof of (i) above).

\smallskip

The last formulas follow from the Frobenius reciprocity, namely there is a linear bijection between $\Hom_{\cB(I_0), \cA(I_0)}(\iota\lambda, \iota\mu)$ and $\Hom_{\cB(I_0)}(\theta\lambda, \mu)$. Explicitly, two linear maps given by
\begin{subequations} \label{eq:frobenius_reciprocity_for_main_formula_galpha}
\begin{align}
\Hom_{\cB(I_0), \cA(I_0)}(\iota\lambda, \iota\mu)\ni t &\longmapsto w^*\iotabar(t) \in \Hom_{\cB(I_0)}(\theta\lambda, \mu), \\
\Hom_{\cB(I_0)}(\theta\lambda, \mu)\ni r &\longmapsto \iota(r)v\in \Hom_{\cB(I_0), \cA(I_0)}(\iota\lambda, \iota\mu)
\end{align}
\end{subequations}
are mutually inverse of each other. By (i) and (ii), we get the statement.
\end{proof}

As a consequence of the above theorem, we get the following.
\begin{corollary}
Two inductions $\alpha^{g; +}$ and $\alpha^-$ define functors from $\Delta_{\cB}^{g'}(I_0)$ (considered as a full and replete subcategories of $\End(\cB(I_0))$) to $\End(\cA(I_0))$ which act as the inclusion map on arrows.
\end{corollary}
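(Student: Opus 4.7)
The plan is to verify that the assignments $\lambda \mapsto \gpalpha{g}{\lambda}$ and $\lambda\mapsto \gmalpha{\lambda}$ on objects, together with the inclusion $\iota$ on arrows (i.e.\ $t \mapsto \iota(t)$), define functors from $\Delta_{\cB}^{g'}(I_0)$, viewed as a full and replete subcategory of $\End(\cB(I_0))$, to $\End(\cA(I_0))$. The object maps are already provided by the definitions in Section~\ref{subsection:ind_for_twisted_reps_def}, so the only nontrivial task is to show that the arrow assignment is well defined, after which the functor axioms follow trivially from the fact that $\iota$ is a unital $*$-homomorphism.

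For well-definedness I would directly invoke Theorem~\ref{thm_galpha_intertwiner}. Given $t\in\Hom_{\cB(I_0)}(\lambda,\mu)$, a one-line computation shows that $\iota(t)$ lies in the local intertwiner space $\Hom_{\cB(I_0),\cA(I_0)}(\iota\lambda,\iota\mu)$: for every $n\in\cB(I_0)$,
\begin{equation*}
\iota(t)\cdot \iota\lambda(n) = \iota(t\lambda(n)) = \iota(\mu(n)t) = \iota\mu(n)\cdot \iota(t),
\end{equation*}
using $t\lambda(n) = \mu(n)t$ together with multiplicativity of $\iota$. By parts (i) and (ii) of Theorem~\ref{thm_galpha_intertwiner}, this local intertwiner space coincides with $\Hom_{\cA(I_0)}(\gpalpha{g}{\lambda}, \gpalpha{g}{\mu})$ and $\Hom_{\cA(I_0)}(\gmalpha{\lambda}, \gmalpha{\mu})$ respectively, so $\iota(t)$ intertwines both pairs of induced endomorphisms as claimed.

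Functoriality is then immediate: $\iota(1_\lambda)=1$ is the identity intertwiner for both induced endomorphisms since $\iota$ is unital, and $\iota(t_2 t_1)=\iota(t_2)\iota(t_1)$ because $\iota$ is multiplicative, so the assignment respects composition. There is essentially no obstacle in this argument---the corollary is a direct repackaging of Theorem~\ref{thm_galpha_intertwiner} together with the elementary algebraic properties of the inclusion $\cB(I_0)\subset\cA(I_0)$; the real content has already been absorbed into the preceding theorem.
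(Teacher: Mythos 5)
Your argument is correct and matches the paper's proof: the paper simply cites the inclusion $\Hom_{\cB(I_0)}(\lambda,\mu)\hookrightarrow \Hom_{\cB(I_0),\cA(I_0)}(\iota\lambda,\iota\mu)$ together with Theorem~\ref{thm_galpha_intertwiner}, which is exactly what you spell out (along with the routine verification of functoriality from $\iota$ being a unital homomorphism).
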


\begin{proof}
The statement follows from the inclusion $\Hom_{\cB(I_0)}(\lambda, \mu)\hookrightarrow \Hom_{\cB(I_0), \cA(I_0)}(\iota\lambda, \iota\mu)$.
\end{proof}

We now consider the global intertwiners between induced endomorphisms. Since all intertwiners which appeared in the proof of Theorem~\ref{thm_galpha_intertwiner} are global intertwiners by Lemma~\ref{lemma_intertwiners_GLoc}, one can prove the global analogue of Theorem~\ref{thm_galpha_intertwiner}. We note that maps with exactly the same expression as Eq.~\eqref{eq:frobenius_reciprocity_for_main_formula_galpha} gives a bijection between $\Hom_{\cB_\infty, \cA_\infty}(\iota\lambda, \iota\mu)$ and $\Hom_{\cB_\infty}(\theta\lambda, \mu)$. Since we have $\Hom_{\cB(I_0)}(\theta\lambda, \mu) = \Hom_{\cB_\infty}(\theta\lambda, \mu)$ by Lemma~\ref{lemma_intertwiners_GLoc}, this observation leads to the following.

\begin{proposition}
Let $g\in G$ and $\lambda\in \Delta_{\cB}^{g'}(I_0)$. Then the space of local intertwiners between $\gpalpha{g}{\lambda}$ and $\gpalpha{g}{\mu}$ (resp. $\gmalpha{\lambda}$ and $\gmalpha{\mu}$) coincides with the space of global intertwiners between them.
\end{proposition}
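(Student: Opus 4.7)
The plan is to replay the proof of Theorem~\ref{thm_galpha_intertwiner} in the global setting and then collapse local and global intertwiner spaces to the same set via Frobenius reciprocity combined with Lemma~\ref{lemma_intertwiners_GLoc}.

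First I would establish the global analogue of Theorem~\ref{thm_galpha_intertwiner}, namely
\[
\Hom_{\cA_\infty}(\gpalpha{g}{\lambda}, \gpalpha{g}{\mu}) = \Hom_{\cB_\infty, \cA_\infty}(\iota\lambda, \iota\mu),
\]
and the analogous identity for $\gmalpha{(\cdot)}$. The inclusion $\subset$ is immediate from the extension property $\gpalpha{g}{\lambda}\circ \iota = \iota\circ \lambda$. For the reverse inclusion, given $t \in \Hom_{\cB_\infty, \cA_\infty}(\iota\lambda, \iota\mu)$, I would set $s = \iotabar(t) \in \Hom_{\cB_\infty}(\theta\lambda, \theta\mu)$ and repeat the computation in the proof of Theorem~\ref{thm_galpha_intertwiner} verbatim. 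That manipulation uses only the properties of $c^\pm$, Lemma~\ref{lemma:locality_of_Q_system_net_of_subfactor}, and the intertwining property of $z_g$, so it goes through without any localization hypothesis on $t$, yielding $t\gpalpha{g}{\lambda}(v) = \gpalpha{g}{\mu}(v)t$. Together with the intertwining on $\cB_\infty$ and the relation $\cA(I) = \cB(I)v$ for $I \supset I_0$, this extends to an intertwining on all of $\cA_\infty$, since every interval in $\cJ$ can be enlarged to contain $I_0$.

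Next, the same algebraic identities $\iota(w^*)v = 1$, $v\iota(a) = \iota\theta(a)v$ (for $a\in \cB_\infty$), and $w^*\iotabar(v) = 1$ that underlie Eq.~\eqref{eq:frobenius_reciprocity_for_main_formula_galpha} show that the formulas $t\mapsto w^*\iotabar(t)$ and $r\mapsto \iota(r)v$ give mutually inverse bijections between $\Hom_{\cB_\infty, \cA_\infty}(\iota\lambda, \iota\mu)$ and $\Hom_{\cB_\infty}(\theta\lambda, \mu)$. Since $\theta$ is $e$-localized in $I_0$, both $\theta\lambda$ and $\mu$ are $g'$-localized in $I_0$, so Lemma~\ref{lemma_intertwiners_GLoc}(i) gives $\Hom_{\cB_\infty}(\theta\lambda, \mu) = \Hom_{\cB(I_0)}(\theta\lambda, \mu)$. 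Combining the first step, Theorem~\ref{thm_galpha_intertwiner}, and the two Frobenius bijections (local and global, given by identical formulas), both $\Hom_{\cA(I_0)}(\gpalpha{g}{\lambda}, \gpalpha{g}{\mu})$ and $\Hom_{\cA_\infty}(\gpalpha{g}{\lambda}, \gpalpha{g}{\mu})$ are realized as the image of the same space $\Hom_{\cB_\infty}(\theta\lambda, \mu)$ under the explicit map $r\mapsto \iota(r)v$; hence they coincide as subsets of $\cA_\infty$. The argument for $\gmalpha{(\cdot)}$ is analogous, using only the opposite braiding and dropping $z_g$ throughout.

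The only delicate point is the first step: one has to check that the intertwining property, a priori verified on $\cB_\infty$ and at the single generator $v$, propagates to all of $\cA_\infty$. This is handled by the relation $\cA(I) = \cB(I)v$ for $I\supset I_0$ together with the fact that every element of $\cA_\infty$ lies in some $\cA(I)$ which can be chosen to contain $I_0$.
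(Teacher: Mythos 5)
Your proposal is correct and takes essentially the same route as the paper: establish the global analogue of Theorem~\ref{thm_galpha_intertwiner} by noting that the computation there goes through with $s=\iotabar(t)\in\Hom_{\cB_\infty}(\theta\lambda,\theta\mu)$ regardless of a priori localization, pass through the Frobenius reciprocity bijections of the form Eq.~\eqref{eq:frobenius_reciprocity_for_main_formula_galpha} (which are given by identical formulas in the local and global settings), and invoke Lemma~\ref{lemma_intertwiners_GLoc}(i) to identify $\Hom_{\cB(I_0)}(\theta\lambda,\mu)$ with $\Hom_{\cB_\infty}(\theta\lambda,\mu)$.
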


\subsection{Subsectors of $[\gpalpha{g}{\lambda}]$ and $[\gmalpha{\lambda}]$} \label{subsection:ind_for_twisted_reps_common_subsector}

Let $g\in G$. We consider endomorphisms of $\cA_\infty$ which are subobjects of both $\gpalpha{g}{\lambda}$ and $\gmalpha{\mu}$ for some $\lambda, \mu \in \Delta_{\cB}^{g'}(I_0)$.
The goal of this subsection is to show the following theorem, which says such endomorphisms are $g$-localized in $I_0$ and transportable.

\begin{theorem} \label{thm:common_subsector_of_plusminus}
Let $g\in G$ and $\beta\in \End(\cA_\infty)$. Suppose that $\beta$ is a subobject of both $\gpalpha{g}{\lambda}$ and $\gmalpha{\mu}$ for some $\lambda, \mu \in \Delta_{\cB}^{g'}(I_0)$ by isometries in $\cA(I_0)$. Then we have $\beta\in \Delta_{\cA}^g(I_0)$.
\end{theorem}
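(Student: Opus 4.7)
The strategy is to verify the two defining conditions of $\Delta_{\cA}^g(I_0)$: that $\beta$ is $g$-localized in $I_0$, and that $\beta$ is transportable.

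For $g$-localization in $I_0$, let $s, t \in \cA(I_0)$ be isometries realizing $\beta$ as a subobject of $\gpalpha{g}{\lambda}$ and $\gmalpha{\mu}$, respectively, so that $\beta(\cdot) = s^*\gpalpha{g}{\lambda}(\cdot)s = t^*\gmalpha{\mu}(\cdot)t$. For $m \in \cA(J)$ with $J > I_0$, Proposition~\ref{prop:galpha_localizable}(i) gives $\gpalpha{g}{\lambda}(m) = \tilbeta_g(m) \in \cA(J)$; since $J \cap I_0 = \emptyset$, locality of $\cA$ forces $s$ and $\tilbeta_g(m)$ to commute, so $\beta(m) = s^*s\,\tilbeta_g(m) = \tilbeta_g(m)$. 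Symmetrically, Proposition~\ref{prop:galpha_localizable}(ii) together with the locality of $t \in \cA(I_0)$ yields $\beta(m) = m$ for $m \in \cA(J)$ with $J < I_0$. Thus $\beta$ is $g$-localized in $I_0$.

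For transportability, given any $\widetilde{I} \in \cJ$, the plan is to exhibit a unitary $V \in \cA_\infty$ such that $\Ad(V) \circ \beta$ is $g$-localized in $\widetilde{I}$. By the transportability of $\lambda$ and $\mu$ in $\GpLoc \cB$, choose unitaries $U_\lambda, U_\mu \in \cB_\infty$ transporting $\lambda$ and $\mu$ to $\widetilde{I}$. By Lemma~\ref{lemma:galpha_transportability}, the conjugation $\Ad(\iota(U_\lambda)) \circ \gpalpha{g}{\lambda}$ coincides with the induced endomorphism $\widetilde{\alpha}^{g;+}_{\widetilde{\lambda}}$ defined relative to $\widetilde{I}$, which has the plus-localization in $\widetilde{I}$ by Proposition~\ref{prop:galpha_localizable}(i); an analogous statement holds for $\Ad(\iota(U_\mu)) \circ \gmalpha{\mu}$. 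Setting $V := \iota(U_\lambda)$, the endomorphism $\Ad(V) \circ \beta$ is a subobject of $\widetilde{\alpha}^{g;+}_{\widetilde{\lambda}}$ via $V s V^*$, and I would repeat the Step~1 argument with $\widetilde{I}$ playing the role of $I_0$, using both the plus- and minus-subobject data for $\Ad(V) \circ \beta$, to obtain the $g$-localization in $\widetilde{I}$.

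The main technical obstacle is controlling the locality of the transported intertwiners. The naive transported isometries $VsV^*$ and $VtV^*$ lie in $\cA(K)$ for an interval $K \supset I_0 \cup \widetilde{I}$, which is strictly larger than $\cA(\widetilde{I})$; hence the locality input used in Step~1 is available only against $J$'s disjoint from $K$, not against all $J < \widetilde{I}$ or $J > \widetilde{I}$. I expect the resolution to require coordinating the plus-transport (by $U_\lambda$) with the minus-transport (by $U_\mu$), so that extreme right half-lines past $\widetilde{I}$ are handled by the plus-subobject and extreme left half-lines by the minus-subobject, while the intermediate region between $I_0$ and $\widetilde{I}$ is covered by combining both pieces of data; a symmetric argument handles $\widetilde{I}$ lying on the other side of $I_0$. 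This coordination between the two transporters is the crux of the proof.
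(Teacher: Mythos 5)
Your Step~1 (the $g$-localization of $\beta$ in $I_0$) is correct, and in fact slightly more direct than what the paper does: you use the two subobject realizations, $\beta = s^*\gpalpha{g}{\lambda}(\cdot)s = t^*\gmalpha{\mu}(\cdot)t$ with $s,t\in\cA(I_0)$, together with Proposition~\ref{prop:galpha_localizable} and locality, and that argument goes through as written. The paper instead first proves the auxiliary identity $\beta = t^*\gmalpha{\lambda}(\cdot)t$, i.e.\ that the \emph{same} isometry $t$ and the \emph{same} $\lambda$ witness $\beta$ as a subobject of $\gmalpha{\lambda}$, and only then reads off the localization; the two versions give the same localization conclusion.

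The gap is in Step~2, and you have in fact put your finger on exactly where it is: you end with ``this coordination between the two transporters is the crux of the proof'' but never explain how to achieve the coordination. As stated, the plan does not close: setting $V=\iota(U_\lambda)$ makes $\Ad(V)\circ\beta$ a subobject of $\widetilde{\alpha}^{g;+}_{\widetilde\lambda}$, but the minus-subobject datum you carry is for $\Ad(\iota(U_\mu))\circ\beta$, a \emph{different} conjugate of $\beta$, so you cannot run Step~1 at $\widetilde I$ with both pieces at once. The paper's crucial idea, which you are missing, is precisely the identity $\beta = t^*\gmalpha{\lambda}(\cdot)t$ mentioned above. It is obtained by first noting that $1\in\Hom_{\cB_\infty,\cA_\infty}(\beta^+\iota,\beta^-\iota)$ (since $\gpalpha{g}{\lambda}$ and $\gmalpha{\lambda}$ both restrict to $\lambda$ on $\cB_\infty$) and then upgrading this to $1\in\Hom_{\cA_\infty}(\beta^+,\beta^-)$ via Lemma~\ref{lemma:galpha_sub_intertwiners}, using that $\beta^+\prec\gmalpha{\mu}$ and $\beta^-\prec\gmalpha{\lambda}$. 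Once $\beta^+=\beta^-$, the plus and minus data share a single $\lambda$ and a single isometry $t$, so Lemma~\ref{lemma:galpha_transportability} conjugates \emph{both} $\gpalpha{g}{\lambda}$ and $\gmalpha{\lambda}$ by the \emph{same} unitary $\iota(U)$, and the coordination problem disappears. (There remains the purely technical point that $\iota(U)t\iota(U)^*$ lives in a larger local algebra than $\cA(\widetilde I)$, but since the corresponding support projection is a self-intertwiner of the induced endomorphism it lies in $\cA(\widetilde I)$ by the local-equals-global intertwiner property in Section~\ref{subsection:ind_for_twisted_reps_intertwiners}, and one can replace the isometry by one in $\cA(\widetilde I)$ in the usual type~III way.) Without the $\beta^+=\beta^-$ step your proof of transportability cannot be completed by the route you describe.
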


Before the proof of above theorem, we give the following two lemmas.

\begin{lemma} 
Let $\lambda\in \Delta_{\cB}^{g'}(I_0)$ and $\beta\in \End(\cA_\infty)$ such that $\beta$ is a subobject of $\gmalpha{\lambda}$ in $\End(\cA_\infty)$. Suppose that $\beta$ is a subobject of $\gpalpha{g}{\mu}$ in $\End(\cA_\infty)$ for some $\mu\in \Delta_{\cB}^{g'}(I_0)$. Then $\beta$ is a subobject of $\gpalpha{g}{\lambda}$ in $\End(\cA_\infty)$. An Analogous statement holds if one interchange the plus and minus.
\end{lemma}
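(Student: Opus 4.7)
The plan is to leverage the identification of endomorphism algebras established by Theorem~\ref{thm_galpha_intertwiner} together with its global version from the preceding proposition, and then use the second hypothesis as a bridge. Let $t \in \Hom(\beta, \gmalpha{\lambda})$ and $s \in \Hom(\beta, \gpalpha{g}{\mu})$ denote the given isometries in $\cA_\infty$, and set $p := tt^*$. Theorem~\ref{thm_galpha_intertwiner} (combined with the preceding proposition promoting local intertwiners to global ones) implies that $\End(\gmalpha{\lambda})$ and $\End(\gpalpha{g}{\lambda})$ coincide as subalgebras of $\cA_\infty$, since both equal $\Hom_{\cB_\infty, \cA_\infty}(\iota\lambda, \iota\lambda)$. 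Hence $p \in \End(\gpalpha{g}{\lambda})$, and one can pick an isometry $t' \in \cA_\infty$ with $t'{t'}^* = p$; this realizes a subobject $\beta^+ := t'^*\gpalpha{g}{\lambda}(\cdot)t' \prec \gpalpha{g}{\lambda}$.

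It then suffices to show $\beta^+ \cong \beta$ in $\End(\cA_\infty)$. Both endomorphisms extend the same $\lambda$ through $\iota$, so they automatically agree on $\iota(\cB_\infty)$; since $\cA(I) = \cB(I)v$ for $I \supset I_0$, the isomorphism class is controlled by the action on $v$. Explicitly, by Eq.~\eqref{eq:values_of_galpha_on_v},
\begin{equation*}
\beta(v) = t^* c^-(\lambda,\theta)^* v\, t \qquad \text{and} \qquad \beta^+(v) = t'^* c^+(\lambda,\theta)^* z_g^* v\, t'.
\end{equation*}
These two expressions differ in general, so a non-trivial unitary $u \in \Hom(\beta, \beta^+)$ must be produced.

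The main step is to construct such a $u$ using the hypothesis $\beta \prec \gpalpha{g}{\mu}$. Starting from the ``mixed'' partial isometry $X := ts^* \in \Hom(\gpalpha{g}{\mu}, \gmalpha{\lambda})$ (which satisfies $X^*X = ss^* \in \End(\gpalpha{g}{\mu})$ and $XX^* = p$), the plan is to transport it to a partial isometry $Y \in \Hom(\gpalpha{g}{\mu}, \gpalpha{g}{\lambda})$ with $Y^*Y = ss^*$. I would do this by applying Frobenius reciprocity~\eqref{eq:frobenius_reciprocity_for_main_formula_galpha} on both sides to pass through $\Hom(\theta\mu, \lambda)$, where the identification between the plus and minus side is canonical, and then re-lifting. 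Once $Y$ is obtained, the composite $Ys \in \Hom(\beta, \gpalpha{g}{\lambda})$ is an isometry, which immediately yields $\beta \prec \gpalpha{g}{\lambda}$ and completes the proof; the analogous statement obtained by interchanging the roles of plus and minus is handled symmetrically, replacing $c^+, z_g$ with the corresponding opposite-braiding data.

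The main obstacle is the transport of $X$ to $Y$: the element $X$ intertwines a plus induction with a minus induction and carries an extra twist on $v$ built from $c^-(\lambda,\theta)$, while the target $Y$ must satisfy the plus--plus intertwining relation involving $c^+(\lambda,\theta)$ and $z_g$. The Frobenius/Q-system manipulation must absorb the braiding discrepancy $c^+(\lambda,\theta)^* z_g^* \cdot c^-(\lambda,\theta)$ on the $\lambda$-side against the analogous discrepancy on the $\mu$-side implicit in $s$; this is precisely where the $G$-equivariant structure encoded by $z_g$ and the $G$-crossed braiding axioms from Section~\ref{section:braided_G_crossed_category} enter in an essential way.
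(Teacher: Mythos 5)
There is a genuine gap, and it sits precisely where you flagged the ``main obstacle'': the transport of $X=ts^*$ from a mixed-type intertwiner to a plus--plus intertwiner is not carried out, only sketched, and the sketch misdiagnoses what is required. In fact no Frobenius reciprocity pass-through and no braiding or $G$-equivariance manipulation is needed, because Theorem~\ref{thm_galpha_intertwiner} already does all the work. Since $t\in\Hom(\beta,\gmalpha{\lambda})$ and $s\in\Hom(\beta,\gpalpha{g}{\mu})$, restricting to $\iota(\cB_\infty)$ gives $t\in\Hom(\beta\iota,\iota\lambda)$ and $s\in\Hom(\beta\iota,\iota\mu)$, so $X=ts^*\in\Hom_{\cB_\infty,\cA_\infty}(\iota\mu,\iota\lambda)$; by Theorem~\ref{thm_galpha_intertwiner} (in its global form, via the proposition following it) this space \emph{is} $\Hom_{\cA_\infty}(\gpalpha{g}{\mu},\gpalpha{g}{\lambda})$, so you may simply take $Y=X$. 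The conditions $Y^*Y=ss^*$ and $YY^*=tt^*$ are then automatic. The claim that one must ``absorb the braiding discrepancy $c^+(\lambda,\theta)^*z_g^*\cdot c^-(\lambda,\theta)$'' and that the $G$-crossed braiding axioms enter ``in an essential way'' is wrong: all of that structure is already packaged inside Theorem~\ref{thm_galpha_intertwiner}, and invoking it again here is a category error. As written, the proposal identifies the needed ingredients but does not actually prove the transport step, which is the whole content of the lemma.

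Two further remarks on efficiency. First, the entire $p$, $t'$, $\beta^+$ detour in your first paragraph is unnecessary: once you know $X\in\Hom(\gpalpha{g}{\mu},\gpalpha{g}{\lambda})$, the element $Xs=ts^*s=t$ is directly an isometry in $\Hom(\beta,\gpalpha{g}{\lambda})$ --- indeed, for $m\in\cA_\infty$ one has $\gpalpha{g}{\lambda}(m)t=\gpalpha{g}{\lambda}(m)Xs=X\gpalpha{g}{\mu}(m)s=Xs\beta(m)=t\beta(m)$. So the very same $t$ that witnesses $\beta\prec\gmalpha{\lambda}$ also witnesses $\beta\prec\gpalpha{g}{\lambda}$, and the comparison of $\beta(v)$ with $\beta^+(v)$ you set up is moot. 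Second, if you do want to run the $\beta^+$ route, the right tool to close it is Lemma~\ref{lemma:galpha_sub_intertwiners} (the companion lemma, following the BE3 Lemma~3.2 pattern): $u:=t'^*t$ lies in $\Hom_{\cB_\infty,\cA_\infty}(\beta\iota,\beta^+\iota)$ because $tt^*=t't'^*$ commutes with $\iota\lambda(\cB_\infty)$, and since $\beta\prec\gpalpha{g}{\mu}$ and $\beta^+\prec\gpalpha{g}{\lambda}$, Lemma~\ref{lemma:galpha_sub_intertwiners} upgrades $u$ to a genuine unitary in $\Hom_{\cA_\infty}(\beta,\beta^+)$; this is the argument the paper is alluding to by citing Lemmas~3.1 and~3.2 of \cite{BE3}. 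Either route is fine, but neither involves re-deriving braiding identities.
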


\begin{lemma} \label{lemma:galpha_sub_intertwiners}
Let $\beta_i\in \End(\cA_\infty)$ such that $\beta_i$ is a subobject of $\gpalpha{g}{\lambda_i}$ for $\lambda_i\in \Delta_{\cB}^{g'}(I_0)$, $i=1, 2$. Then we have 
\begin{equation*}
\Hom_{\cB_\infty, \cA_\infty}(\beta_1\iota, \beta_2\iota) = \Hom_{\cA_\infty}(\beta_1, \beta_2),
\end{equation*}
where $\Hom_{\cB_\infty, \cA_\infty}(\beta_1\iota, \beta_2\iota)$ denotes the space of global intertwiners $\{t\in \cA_\infty : t\beta_1\iota(n) = \beta_2\iota(n)t \ \text{for all} \ n\in \cB_\infty \}$.
An analogous statement holds if one interchange the plus and minus.
\end{lemma}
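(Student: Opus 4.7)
The inclusion $\Hom_{\cA_\infty}(\beta_1,\beta_2)\subseteq \Hom_{\cB_\infty,\cA_\infty}(\beta_1\iota,\beta_2\iota)$ is immediate by restricting the intertwining relation to $\iota(\cB_\infty)$, so the content of the lemma is the reverse inclusion. The plan is to sandwich a given $t\in \Hom_{\cB_\infty,\cA_\infty}(\beta_1\iota,\beta_2\iota)$ between the subobject isometries realizing $\beta_i\prec \gpalpha{g}{\lambda_i}$, promote the resulting element to a global intertwiner of the induced endomorphisms themselves, and then unsandwich to recover the desired intertwining on all of $\cA_\infty$.

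Concretely, I would fix isometries $s_i\in \Hom_{\cA_\infty}(\beta_i,\gpalpha{g}{\lambda_i})$ and set $T:=s_2 t s_1^*\in \cA_\infty$. Using the extension identity $\gpalpha{g}{\lambda_i}\circ\iota=\iota\circ\lambda_i$, the intertwining relations $s_i\beta_i(x)=\gpalpha{g}{\lambda_i}(x)s_i$ (equivalently $s_i^*\gpalpha{g}{\lambda_i}(x)=\beta_i(x)s_i^*$), and the hypothesis on $t$, a direct computation yields
\[
T\cdot \iota\lambda_1(n) \;=\; s_2\, t\,\beta_1(\iota(n))\, s_1^* \;=\; s_2\,\beta_2(\iota(n))\, t\, s_1^* \;=\; \iota\lambda_2(n)\cdot T
\]
for every $n\in \cB_\infty$, so $T\in \Hom_{\cB_\infty,\cA_\infty}(\iota\lambda_1,\iota\lambda_2)$.

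The main step, and where I expect the real content of the argument to sit, is to upgrade $T$ from an intertwiner on the subalgebra $\iota(\cB_\infty)$ to a genuine intertwiner of $\gpalpha{g}{\lambda_1}$ and $\gpalpha{g}{\lambda_2}$ on all of $\cA_\infty$. This is supplied by the global analogue of Theorem~\ref{thm_galpha_intertwiner} recalled immediately before the lemma, which via Frobenius reciprocity together with Lemma~\ref{lemma_intertwiners_GLoc} gives the identification $\Hom_{\cB_\infty,\cA_\infty}(\iota\lambda_1,\iota\lambda_2)=\Hom_{\cA_\infty}(\gpalpha{g}{\lambda_1},\gpalpha{g}{\lambda_2})$. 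Hence $T\,\gpalpha{g}{\lambda_1}(a)=\gpalpha{g}{\lambda_2}(a)\,T$ for every $a\in \cA_\infty$; multiplying this relation by $s_2^*$ on the left and $s_1$ on the right and using $s_i^*s_i=1$ together with the consequence $s_i^*\gpalpha{g}{\lambda_i}(a)s_i=\beta_i(a)$ yields $t\beta_1(a)=\beta_2(a)t$ for every $a\in \cA_\infty$, so $t\in\Hom_{\cA_\infty}(\beta_1,\beta_2)$. The minus case is handled by the identical argument with $c^-$ and $\gmalpha{\lambda_i}$ in place of $c^+$ and $\gpalpha{g}{\lambda_i}$.
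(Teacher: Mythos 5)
Your proof is correct and follows essentially the same route the paper takes (via the arguments of \cite[Lemmas 3.1--3.2]{BE3}): sandwich $t$ between the subobject isometries to land in $\Hom_{\cB_\infty,\cA_\infty}(\iota\lambda_1,\iota\lambda_2)$, invoke the global form of Theorem~\ref{thm_galpha_intertwiner} to identify this with $\Hom_{\cA_\infty}(\gpalpha{g}{\lambda_1},\gpalpha{g}{\lambda_2})$, and unsandwich. The only nitpick is in the final step: rather than appealing to $s_i^*\gpalpha{g}{\lambda_i}(a)s_i=\beta_i(a)$, what is actually used is the pair of one-sided relations $s_2^*\gpalpha{g}{\lambda_2}(a)=\beta_2(a)s_2^*$ and $\gpalpha{g}{\lambda_1}(a)s_1=s_1\beta_1(a)$ together with $s_2^*Ts_1=t$, but this is a cosmetic point and the derivation goes through as intended.
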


The proofs of above two lemmas are given by essentially the same arguments as in the proofs of Lemma 3.1 and 3.2 of \cite{BE3}. 

\begin{proof}[Proof of Theorem~\ref{thm:common_subsector_of_plusminus}]
By our assumption, there exists an isometry $t\in \Hom_{\cA_\infty}(\beta, \gpalpha{g}{\lambda})$ such that $t\in \cA(I_0)$. We set $\beta^+(\cdot)\equiv  \beta(\cdot) = t^*\gpalpha{g}{\lambda}(\cdot)t$ and $\beta^-(\cdot)=t^*\gmalpha{\lambda}(\cdot)t$. 

We first show $\beta^+ = \beta^-$. For every $n\in \cB_\infty$, we have
\begin{equation*}
\beta^+(n) = t^*\gpalpha{g}{\lambda}(n) t = t^* \lambda(n) t = t^*\gmalpha{\lambda}(n) t = \beta^-(n),
\end{equation*}
which implies $1\in \Hom_{\cB_\infty, \cA_\infty}(\beta^+\iota, \beta^-\iota)$. 
Since $\beta^+\prec \gmalpha{\mu}$ and $\beta^-\prec \gmalpha{\lambda}$, we have $1\in \Hom_{\cA_\infty}(\beta^+, \beta^-)$ by Lemma~\ref{lemma:galpha_sub_intertwiners} and obtain $\beta = \beta^+ = \beta^-$. 

We then show that $\beta$ is $g$-localized in $I_0$. For $I_+ \in \cJ$ with $I_0 < I_+$ and $m\in \cA(I_+)$, we have $\gpalpha{g}{\lambda}(m) = \tilbeta_g(m)$ by (i) of Proposition~\ref{prop:galpha_localizable}. Since $t$ and $\tilbeta_g(m)$ commute, we get $\beta^+(m) = t^*\gpalpha{g}{\lambda}(m)t = t^*\tilbeta_g(m)t = \tilbeta_g(m)$. Similarly, for $I_- \in \cJ$ with $I_- < I_0$ and $m\in \cA(I_-)$, we have $\gmalpha{\lambda}(m) = m$ by (ii) of Proposition~\ref{prop:galpha_localizable} and we obtain $\beta^+(m) = \beta^-(m) = t^*\gmalpha{\lambda}(m)t = t^*mt = m$. Therefore we see that $\beta$ is $g$-localized in $I_0$.

Finally, by Lemma~\ref{lemma:galpha_transportability} one can also see that $\beta$ is transportable, which completes the proof.
\end{proof}

\subsection{$\alpha\sigma$-reciprocity formula} \label{subsection:ind_for_twisted_reps_alpha_sigma_reciprocity}

We now consider a relation to the restriction procedure. We will generalize the $\alpha\sigma$-reciprocity formula of~\cite{BE1}. The arguments below are essentially the same as the case of usual $\alpha$-induction, although some computations become complicated.

We first recall the definition of $\sigma$-restriction~\cite{LR95,BE1}.

\begin{definition}
For $\beta\in\End(\cA_\infty)$, the \emph{$\sigma$-restricted} endomorphism $\sigma_\beta\in\End(\cB_\infty)$ is defined by
\[
\sigma_\beta = \iotabar\circ\beta\circ\iota.
\]
\end{definition}

Let $g\in G$ and $\beta\in \Delta_{\cA}^g(I_0)$. It is easy to see that $\sigma_\beta$ is $g'$-localized in $I_0$. One can also prove $\sigma_\beta\in \Delta_{\cB}^{g'}(I_0)$ as below. Let $I_1\in \cJ$ be an interval. Since $\theta$ and $\beta$ is transportable, we can choose unitaries $u_{\theta; I_0, I_1}\in \cB_\infty$ and $Q_{\beta; I, I_1}\in \cA_\infty$ such that $\theta_{I_1} = \Ad(u_{\theta; I_0, I_1})\circ \theta$ is localized in $I_1$ and $\beta_{I_1} = \Ad(Q_{\beta; I, I_1})\circ \beta$ is $g$-localized in $I_1$.

\begin{proposition} \label{prop:sigma_restriction_transportable}
Under the assumptions as above, we have that $\sigma_{\beta, I_1} = \Ad(u_{\sigma_\beta; I_0, I_1})\circ \sigma_\beta$ is $g'$-localized in $I_1$ where 
\begin{equation*}
u_{\sigma_\beta; I_0, I_1} = u_{\theta; I_0, I_1} \iotabar(Q_{\beta; I_0, I_1}).
\end{equation*}
Consequently, $\sigma_\beta$ is transportable.
\end{proposition}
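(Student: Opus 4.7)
The plan is to unwind the definitions and reduce the claim to the localization properties that $\beta_{I_1}$ and $\theta_{I_1}$ already have on $\cA_\infty$ and $\cB_\infty$ respectively. For brevity write $u = u_{\theta; I_0, I_1} \in \cB_\infty$ and $Q = Q_{\beta; I_0, I_1}\in \cA_\infty$, so that $u_{\sigma_\beta; I_0, I_1} = u\,\iotabar(Q)$. Since $\iotabar \colon \cA_\infty \to \cB_\infty$ is a $*$-homomorphism, conjugation commutes with it: $\iotabar(Q)\,\iotabar(\beta(\iota(n)))\,\iotabar(Q)^* = \iotabar(Q\beta(\iota(n))Q^*) = \iotabar(\beta_{I_1}(\iota(n)))$. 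Therefore
\begin{equation*}
\sigma_{\beta, I_1} = \Ad(u\,\iotabar(Q)) \circ \iotabar\circ\beta\circ\iota = \Ad(u)\circ \iotabar\circ\beta_{I_1}\circ \iota.
\end{equation*}

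With this compact form in hand, I would verify $g'$-localization of $\sigma_{\beta, I_1}$ in $I_1$ by the two standard cases. For $J\in\cJ$ with $J<I_1$ and $n\in \cB(J)$, we have $\iota(n)\in \cA(J)$ and the $g$-localization of $\beta_{I_1}$ in $I_1$ gives $\beta_{I_1}(\iota(n)) = \iota(n)$; applying $\iotabar\iota = \theta$ and then $\Ad(u)$ gives $\theta_{I_1}(n)$, which equals $n$ since $\theta_{I_1}$ is (DHR-)localized in $I_1$. For $J>I_1$ and $n\in \cB(J)$, the $g$-localization of $\beta_{I_1}$ yields $\beta_{I_1}(\iota(n)) = \tilbeta_g(\iota(n)) = \iota(\beta_{g'}(n))$ by Eq.~\eqref{eq:beta_comm_iota}; then $\iotabar\iota(\beta_{g'}(n)) = \theta(\beta_{g'}(n))$ and $\Ad(u)\circ\theta(\beta_{g'}(n)) = \theta_{I_1}(\beta_{g'}(n)) = \beta_{g'}(n)$, again using that $\theta_{I_1}$ is localized in $I_1$ and hence trivial on $\cB(J)$ for $J$ disjoint from $I_1$. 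This establishes both halves of the localization condition, so $\sigma_{\beta, I_1} \in \Delta_\cB^{g'}(I_1)$, and consequently $\sigma_\beta$ is transportable.

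There is no real obstacle here beyond bookkeeping. The one point that must be handled carefully is the compatibility between the two different localization statements — for $\beta_{I_1}$ living in $\cA_\infty$ and for $\theta_{I_1}$ living in $\cB_\infty$ — and the observation that $\Ad(\iotabar(Q))$ pushes inside the homomorphism $\iotabar$ as $\iotabar \circ \Ad(Q)$; once these are recognized, the intertwining relation $\tilbeta_g\circ\iota = \iota\circ\beta_{g'}$ (Eq.~\eqref{eq:beta_comm_iota}) does all the remaining work.
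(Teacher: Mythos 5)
Your proof is correct and follows essentially the same route as the paper: both reduce $\sigma_{\beta,I_1} = \Ad(u)\circ\iotabar\circ\beta_{I_1}\circ\iota$ (using that $\Ad(\iotabar(Q))\circ\iotabar = \iotabar\circ\Ad(Q)$) and then check the two cases $J<I_1$ and $J>I_1$ via the localization of $\beta_{I_1}$, the intertwining relation $\tilbeta_g\iota=\iota\beta_{g'}$, and the localization of $\theta_{I_1}$. No gaps.
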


\begin{proof}
Let $\widetilde{I}\in\cJ$ be an interval such that $\widetilde{I}\subset I_1^\perp$. We note that $\theta_{I_1}$ acts as identity on $\cB(\widetilde{I})$ since $\theta_{I_1}$ is localized in $I_1$. Let $n\in \cB(I)$.
To see that $\sigma_{\beta, I_1}$ is $g'$-localized in $I_1$, we compute
\begin{align*}
\sigma_{\beta, I_1}(n) &= \Ad(u_{\sigma_\beta; I_0, I_1})\circ \sigma_\beta(n) \\
&= \Ad(u_{\theta; I_0, I_1}\cdot \iotabar(Q_{\beta; I, I_1}))\circ \iotabar\circ\beta\circ\iota(n) \\
&= \Ad(u_{\theta; I_0, I_1})\circ \iotabar\circ \beta_{I_1}\circ\iota(n).
\end{align*}
If $\widetilde{I} < I_1$, then $\beta_{I_1}\circ\iota(n) = \iota(n)$ and we get
\begin{equation*}
\Ad(u_{\sigma_\beta; I_0, I_1})\circ \iotabar\circ \beta_{I_1}\circ\iota(n) = \Ad(u_{\theta; I_0, I_1})\circ \theta(n) = \theta_1(n) = n.
\end{equation*}
Also, if $\widetilde{I} > I_1$, then $\beta_{I_1}\circ \iota(n) = \tilbeta_g\circ\iota(n) = \iota\circ\beta_{g'}(n)$ and we get
\begin{align*}
\Ad(u_{\theta; I_0, I_1})\circ \iotabar\circ \beta_{I_1}\circ\iota(n) &= \Ad(u_{\theta; I_0, I_1})\circ \iotabar\circ \iota\circ\beta_{g'}(n) \\
&= \Ad(u_{\theta; I_0, I_1})\circ \theta\circ\beta_{g'}(n) \\
&= \theta_{I_1}\circ \beta_{g'}(n) = \beta_{g'}(n).
\end{align*}
This completes the proof.
\end{proof}

We fix intervals $I_-, I_+\in \cJ$ such that $I_- < I_0 < I_+$ and choose unitaries $u_{\theta, \pm} = u_{\theta, I_0, I_\pm}$ and $Q_{\beta, \pm} = Q_{\beta;I_0, I_\pm}$ as above. We set $u_{\sigma_\beta, \pm} = u_{\sigma_\beta;I_0, I_\pm}$ as in the above Proposition.

\begin{lemma} \label{lemma:sigma_restriction_and_theta_braiding_formula}
Under the assumptions as above, we have
\begin{enumerate}
\item $c^+(\sigma_\beta, \theta) = \gamma_{g'}(\theta)(\iotabar(Q_{\beta, -}^*))\cdot c^+(\theta, \gamma_{g'}(\theta))\cdot \iotabar(Q_{\beta, -})$.
\item $c^-(\sigma_\beta, \theta) = \theta(\iotabar(Q_{\beta, +})^*)\cdot c^-(\theta, \theta)\cdot \iotabar(Q_{\beta, +})$.
\end{enumerate}
\end{lemma}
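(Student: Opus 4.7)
The strategy is to apply the two explicit forms of the $G'$-crossed braiding on $\GpLoc\cB$ --- the definition Eq.~\eqref{eq:def_of_GLoc_braiding} and the right-transporter variant of Lemma~\ref{lemma:GLoc_braiding_another_formula} --- directly to the unitaries $u_{\sigma_\beta,\pm}$ produced by Proposition~\ref{prop:sigma_restriction_transportable}. The key is the factorisation $u_{\sigma_\beta,\pm}=u_{\theta,\pm}\iotabar(Q_{\beta,\pm})$: once we insert it into a braiding formula and expand, the $\iotabar(Q_{\beta,\pm})$ pieces emerge as the outer conjugating factors appearing on the right-hand sides of the claimed identities, while the middle factor built from $u_{\theta,\pm}$ reassembles into a braiding involving only $\theta$.

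For (i), since $\sigma_\beta\in\Delta_\cB^{g'}(I_0)$, $\theta$ is localised in $I_0$, and $\Ad(u_{\sigma_\beta,-})\circ\sigma_\beta$ is $g'$-localised in $I_-<I_0$ by Proposition~\ref{prop:sigma_restriction_transportable}, Eq.~\eqref{eq:def_of_GLoc_braiding} gives
\[
c^+(\sigma_\beta,\theta)=\gamma_{g'}(\theta)(u_{\sigma_\beta,-}^*)\,u_{\sigma_\beta,-}.
\]
Substituting $u_{\sigma_\beta,-}=u_{\theta,-}\iotabar(Q_{\beta,-})$ and distributing $\gamma_{g'}(\theta)(\cdot)$, the middle factor $\gamma_{g'}(\theta)(u_{\theta,-}^*)\,u_{\theta,-}$ is recognised --- again by Eq.~\eqref{eq:def_of_GLoc_braiding} --- as $c^+(\theta,\gamma_{g'}(\theta))$. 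This last identification uses that $\gamma_{g'}(\theta)$ is localised in the same interval $I_0$ as $\theta$, because $\gamma_{g'}$ acts as conjugation by $\beta_{g'}$, which preserves the local algebras $\cB(J)$; hence $u_{\theta,-}$ remains a valid left-transporter for $\theta$ relative to $\gamma_{g'}(\theta)$.

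For (ii), first observe $c^-(\sigma_\beta,\theta)=c^+(\theta,\sigma_\beta)^{-1}$, which is legitimate because $\theta\in(\GpLoc\cB)_e$. Then compute $c^+(\theta,\sigma_\beta)$ via the right-transporter form in Lemma~\ref{lemma:GLoc_braiding_another_formula} with $u_{\sigma_\beta,+}$ (which places $\sigma_\beta$ in $I_+>I_0$), obtaining $c^+(\theta,\sigma_\beta)=u_{\sigma_\beta,+}^*\,\theta(u_{\sigma_\beta,+})$. Taking the inverse and substituting $u_{\sigma_\beta,+}=u_{\theta,+}\iotabar(Q_{\beta,+})$, the middle factor $\theta(u_{\theta,+}^*)\,u_{\theta,+}$ is identified with $c^-(\theta,\theta)$ by the same lemma applied to both copies of $\theta$ and inverted. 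The only real obstacle is bookkeeping: matching each transporter direction to the hypothesis of the braiding formula invoked, and noting the coincidence of localisation intervals of $\theta$ and $\gamma_{g'}(\theta)$. Both checks are immediate, so the proof reduces to a direct calculation with the formulas from Section~\ref{section:conformal_nets_twisted_reps}.
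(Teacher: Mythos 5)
Your proof is correct and matches the paper's argument: both parts rely on the factorisation $u_{\sigma_\beta,\pm}=u_{\theta,\pm}\iotabar(Q_{\beta,\pm})$ from Proposition~\ref{prop:sigma_restriction_transportable}, applying Eq.~\eqref{eq:def_of_GLoc_braiding} with a left transporter for~(i) and Lemma~\ref{lemma:GLoc_braiding_another_formula} with a right transporter together with the relation $c^-(\sigma_\beta,\theta)=c^+(\theta,\sigma_\beta)^{-1}$ for~(ii). Your additional remarks (that $\gamma_{g'}(\theta)$ is still $e$-localised in $I_0$, so $u_{\theta,-}$ serves as a transporter in the braiding $c^+(\theta,\gamma_{g'}(\theta))$, and the explicit inversion step in~(ii)) are just the details the paper leaves implicit.
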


\begin{proof}
(i) By the definition of the $G$-crossed braiding, we have $c^+(\sigma_\beta, \theta) = \gamma_{g'}(\theta)(u_{\sigma_\beta, -}^*) u_{\sigma_\beta, -}$. We also have $c^+(\theta, \gamma_{g'}(\theta)) = \gamma_{g'}(\theta)(u_{\theta, -}^*)u_{\theta, -}$. Using these formulas, we compute
\begin{align*}
c^+(\sigma_\beta, \theta) &= \gamma_{g'}(\theta)(u_{\sigma_\beta, -}^*)u_{\sigma_\beta, -} \\
&=\gamma_{g'}(\theta)(\iotabar(Q_{\beta, -}^*)u_{\theta, -}^*) u_{\theta, -}\iotabar(Q_{\beta, -}) \\
&= \gamma_{g'}(\theta)(\iotabar(Q_{\beta, -}^*))\cdot c^+(\theta, \gamma_{g'}(\theta))\cdot\iotabar(Q_{\beta, -}).
\end{align*}

\smallskip

(ii) Using Lemma~\ref{lemma:GLoc_braiding_another_formula}, we have $c^-(\sigma_\beta, \theta) = \theta(u_{\sigma_\beta, +}^*)u_{\sigma_\beta, +}$ and $c^-(\theta, \theta) = \theta(u_{\theta, +}^*)u_{\theta, +}$. Using these formulas, one can similarly get the equality.
\end{proof}

For $g\in G$, $\lambda \in \Delta_{\cB}^{g'}(I_0)$ and $\beta \in \Delta_{\cA}^g(I_0)$, we denote by $\Hom_{\cB(I_0), \cA(I_0)}(\iota\lambda, \beta\iota)$ the space of local intertwiners $\{t\in \cA(I_0) : t\iota\lambda(n) = \beta\iotabar(n)t \ \text{for all} \ n\in \cB(I_0)\}$.

\begin{theorem} \label{thm:alpha_sigma_reciprocity}
Let $g\in G$, $\lambda \in \Delta_{\cB}^{g'}(I_0)$ and $\beta \in \Delta_{\cA}^g(I_0)$. Then we have the following.
\begin{enumerate}
\item $\Hom_{\cA(I_0)}(\gpalpha{g}{\lambda}, \beta) = \Hom_{\cB(I_0), \cA(I_0)}(\iota\lambda, \beta\iota)$.
\item $\Hom_{\cA(I_0)}(\gmalpha{\lambda}, \beta) = \Hom_{\cB(I_0), \cA(I_0)}(\iota\lambda, \beta\iota)$.
\end{enumerate}
In particular, we have the following $\alpha\sigma$-reciprocity formula:
\begin{equation*}
\langle\gpalpha{g}{\lambda}, \beta \rangle_{\cA(I_0)} = \langle \lambda, \sigma_\beta \rangle_{\cB(I_0)} = \langle \gmalpha{\lambda}, \beta \rangle_{\cA(I_0)}.
\end{equation*}
\end{theorem}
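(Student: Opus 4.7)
The strategy mirrors that of Theorem~\ref{thm_galpha_intertwiner}, with Lemma~\ref{lemma:sigma_restriction_and_theta_braiding_formula} providing the extra ingredient needed to handle $\beta$ in place of $\gpalpha{g}{\mu}$. For (i), the inclusion $\Hom_{\cA(I_0)}(\gpalpha{g}{\lambda}, \beta) \subset \Hom_{\cB(I_0), \cA(I_0)}(\iota\lambda, \beta\iota)$ is immediate from $\gpalpha{g}{\lambda}\circ\iota = \iota\circ\lambda$. For the converse, given $t \in \Hom_{\cB(I_0), \cA(I_0)}(\iota\lambda, \beta\iota)$, since $\cA(I_0) = \iota(\cB(I_0))v$ and the required intertwining holds on $\iota(\cB(I_0))$ by hypothesis, we need only verify $t\,\gpalpha{g}{\lambda}(v) = \beta(v)\,t$.

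I plug in the explicit value $\gpalpha{g}{\lambda}(v) = \iota(c^+(\lambda, \theta)^* z_g^*)v$ from Eq.~\eqref{eq:values_of_galpha_on_v} and, using the transportability of $\beta$, express $\beta(v)$ via $\beta_{I_+} = \Ad(Q_{\beta,+})\circ\beta$ for $I_+ > I_0$: since $\beta_{I_+}(v) = v$, we get $\beta(v) = Q_{\beta,+}^*\,v\,Q_{\beta,+} = Q_{\beta,+}^*\iota(q_+)v$ with $q_+ := \iotabar(Q_{\beta,+})$. Applying $\iotabar$ to the target equation reduces it to an identity in $\cB$ of the form
\[
s\cdot\theta\bigl(c^+(\lambda, \theta)^* z_g^*\bigr)\cdot x = q_+^*\,x\,q_+\,s,
\]
where $s := \iotabar(t) \in \Hom_{\cB(I_0)}(\theta\lambda, \sigma_\beta)$ and $x = \iotabar(v)$. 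To verify this, I would pull $z_g$ past $\theta$-composed factors via the intertwining relation Eq.~\eqref{eq:z_g_intertwining_property} and past $x$ via the Q-system equivariance Eq.~\eqref{eq:z_g_alg_isom}; invoke naturality Eq.~\eqref{naturality_gcbraiding} and the hexagon Eq.~\eqref{braid_rel_gcbraiding} (much as in the identity $c^+(\theta, \theta)\theta(z_g c^+(\lambda, \theta)) = z_g c^+(\theta\lambda, \theta)$ used in the proof of Theorem~\ref{thm_galpha_intertwiner}); use the Q-system commutativity $c^+(\theta, \theta)x = x$ from Lemma~\ref{lemma:locality_of_Q_system_net_of_subfactor}; and finally absorb the $q_+$-factors using Lemma~\ref{lemma:sigma_restriction_and_theta_braiding_formula}(i) together with the intertwining $q_+\sigma_\beta(n) = \sigma_{\beta_{I_+}}(n)q_+$. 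The computation tracks the proof of Theorem~3.21 of~\cite{BE1}, with the extra $z_g$-bookkeeping being the only new ingredient.

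Part (ii) is handled by the parallel argument with $c^-$ replacing $c^+$ and $Q_{\beta,-}$ replacing $Q_{\beta,+}$. Now $\beta_{I_-}(v) = \iota(z_g^*)v$ contributes a factor $z_g^*$ that compensates for the absence of such a factor in $\gmalpha{\lambda}(v) = \iota(c^-(\lambda, \theta)^*)v$; the resulting reduced identity is verified using the opposite-braiding hexagon Eq.~\eqref{braid_rel_gcopbraiding} and Lemma~\ref{lemma:sigma_restriction_and_theta_braiding_formula}(ii).

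For the final reciprocity formula, I combine (i) and (ii) with the Frobenius reciprocity
\[
\Hom_{\cB(I_0), \cA(I_0)}(\iota\lambda, \beta\iota)\;\cong\;\Hom_{\cB(I_0)}(\lambda, \sigma_\beta),\qquad t\mapsto \iotabar(t)w,
\]
whose inverse is $r \mapsto v^*\iota(r)$; that these maps are mutually inverse follows at once from the conjugate equations $\iota(w^*)v = 1 = \iotabar(v^*)w$. Taking dimensions yields $\langle \gpalpha{g}{\lambda}, \beta\rangle_{\cA(I_0)} = \langle \lambda, \sigma_\beta\rangle_{\cB(I_0)} = \langle \gmalpha{\lambda}, \beta\rangle_{\cA(I_0)}$. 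The main obstacle is the verification of the reduced identity in (i), where the $G$-crossed braiding interacts with the equivariant structure $z_g$ in a nontrivial way; once this interaction is controlled by Eqs.~\eqref{eq:z_g_intertwining_property}--\eqref{eq:z_g_alg_isom}, the calculation reduces to moves familiar from the classical $\alpha\sigma$-reciprocity proof.
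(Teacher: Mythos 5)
Your high-level strategy is exactly that of the paper: mirror Theorem~\ref{thm_galpha_intertwiner}, reduce to an identity on $v$, manipulate via naturality, the hexagon, Q-system commutativity, and Lemma~\ref{lemma:sigma_restriction_and_theta_braiding_formula}, and then conclude via the Frobenius reciprocity $t\mapsto\iotabar(t)w$, $r\mapsto v^*\iota(r)$ (that step of yours is correct). However, you have reversed the assignment of the transport unitaries $Q_{\beta,\pm}$ between the two parts, and this is not a cosmetic slip: it breaks the $z_g$-bookkeeping.

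Lemma~\ref{lemma:sigma_restriction_and_theta_braiding_formula}(i) expresses $c^+(\sigma_\beta,\theta)$ in terms of the \emph{left} transport $Q_{\beta,-}$, and part (ii) expresses $c^-(\sigma_\beta,\theta)$ in terms of the \emph{right} transport $Q_{\beta,+}$; you describe it the other way round. For part (i), the naturality/hexagon chain gives
\[
s\,\theta\!\left(c^+(\lambda,\theta)^*z_g^*\right)\iotabar(v)=c^+(\sigma_\beta,\theta)^*z_g^*\,\iotabar(v)\,s,
\]
and expanding $c^+(\sigma_\beta,\theta)^*$ by Lemma~\ref{lemma:sigma_restriction_and_theta_braiding_formula}(i) yields $\iotabar\!\left(Q_{\beta,-}^*z_g^*vQ_{\beta,-}\right)s$, which is $\iotabar\beta(v)s$ precisely because on $\cA(I_0)$ one has $\beta_{I_-}=\tilbeta_g$ and hence $\beta(v)=Q_{\beta,-}^*z_g^*vQ_{\beta,-}$; the $z_g^*$ coming from $\gpalpha{g}{\lambda}(v)$ is matched by the $z_g^*$ coming from the left transport of $\beta$. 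Your choice $\beta(v)=Q_{\beta,+}^*vQ_{\beta,+}$ strips the $z_g$ from the $\beta$-side while the $\gpalpha{g}{\lambda}$-side retains it, so the reduced identity you write down, $s\,\theta(c^+(\lambda,\theta)^*z_g^*)\,x=q_+^*xq_+s$, cannot be closed by Lemma~\ref{lemma:sigma_restriction_and_theta_braiding_formula}(i) as you propose (that lemma produces $q_-$-factors, not $q_+$-factors). Symmetrically, for part (ii) the correct choice is $Q_{\beta,+}$ with $\beta(v)=Q_{\beta,+}^*vQ_{\beta,+}$ (no $z_g$), matching the $z_g$-free $\gmalpha{\lambda}(v)=c^-(\lambda,\theta)^*v$ and Lemma~\ref{lemma:sigma_restriction_and_theta_braiding_formula}(ii); your use of $Q_{\beta,-}$ introduces a spurious $z_g^*$ whose claimed ``compensation'' has no source in the lemma you cite. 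Exchanging the roles of $Q_{\beta,+}$ and $Q_{\beta,-}$ in the two halves of your argument brings it into line with the paper's proof.
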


\begin{proof}
(i) Since $\gpalpha{g}{\lambda}$ acts as $\lambda$ on $\cB(I_0)$, we get $\Hom_{\cA(I_0)}(\gpalpha{g}{\lambda}, \beta) \subset \Hom_{\cB(I_0), \cA(I_0)}(\iota\lambda, \beta\iota)$. 
Let $t\in \Hom_{\cB(I_0), \cA(I_0)}(\iota\lambda, \beta\iota)$. We set $s=\iotabar(t)\in \Hom_{\cB(I_0)}(\theta\lambda, \sigma_\beta)$.
To see $t\in \Hom_{\cA(I_0)}(\gpalpha{g}{\lambda}, \beta)$, it suffices to show \[
t\gpalpha{g}{\lambda}(v) = \beta(v)t.
\]
Applying $\iotabar$ to the both sides and by Eq.~\eqref{eq:values_of_galpha_on_v}, it is equivalent to the equality
\[
s\theta(c^+(\lambda, \theta)^* z_g^*)\iotabar(v) = \iotabar\beta(v)s.
\]
By the properties of braiding Eq.~\eqref{naturality_gcbraiding} and Eq.~\eqref{braid_rel_gcbraiding} and the intertwining property of $z_g$, we have
\begin{equation*}
s\theta(c^+(\lambda, \theta)^*z_g^*) = c^+(\sigma_\beta, \theta)^* \cdot \gamma_{g'}(\theta)(s)\cdot z_g^* c^+(\theta, \theta).
\end{equation*}
We now compute
\begin{align*}
s\theta(c^+(\lambda, \theta)^* z_g^*)\iotabar(v) &= c^+(\sigma_\beta, \theta)^* \cdot \gamma_{g'}(\theta)(s) \cdot z_g^* c^+(\theta, \theta)\iotabar(v) \\
&= c^+(\sigma_\beta, \theta)^* \cdot \gamma_{g'}(\theta)(s) \cdot z_g^*\iotabar(v) \\
&= c^+(\sigma_\beta, \theta)^* z_g^* \theta(s)\iotabar(v) \\
&= c^+(\sigma_\beta, \theta)^* z_g^* \iotabar\iota\iotabar(t)\iotabar(v) \\
&= c^+(\sigma_\beta, \theta)^* z_g^* \iotabar(v) \iotabar(t) \\
&= c^+(\sigma_\beta, \theta)^* z_g^* \iotabar(v)s,
\end{align*}
where we used Lemma~\ref{lemma:locality_of_Q_system_net_of_subfactor} in the first line.
Using (i) of the previous lemma and Lemma~\ref{lemma:locality_of_Q_system_net_of_subfactor} again, we continue
\begin{align*}
c^+(\sigma_\beta, \theta)^* z_g^* \iotabar(v) s &=
\iotabar(Q_{\beta, -}^*)c^+(\theta, \gamma_{g'}(\theta))^* \cdot \gamma_{g'}(\theta)(\iotabar(Q_{\beta, -}))\cdot z_g^* \iotabar(v) s \\
&= \iotabar(Q_{\beta, -}^*)c^+(\theta, \gamma_{g'}(\theta))^* z_g^* \theta(\iotabar(Q_{\beta, -})) \iotabar(v) s \\
&= \iotabar(Q_{\beta, -}^*)c^+(\theta, \gamma_{g'}(\theta))^* z_g^* \iotabar(v) \iotabar(Q_{\beta, -}) s \\
&= \iotabar(Q_{\beta, -}^*)\theta(z_g^*) c^+(\theta, \theta)^* \iotabar(v) \iotabar(Q_{\beta, -}) s \\
&= \iotabar(Q_{\beta, -}^*)\theta(z_g^*) \iotabar(v) \iotabar(Q_{\beta, -}) s = \iotabar(Q_{\beta, -}^* z_g^* v Q_{\beta, -}) s.
\end{align*}
Since $z_g^*v = \tilbeta_g(v) = \beta_{I_-}(v) \equiv \Ad(Q_{\beta, I_-})\circ \beta (v)$, the last line is
\begin{equation*}
\iotabar(Q_{\beta, -}^* z_g^* v Q_{\beta, -}) s = \iotabar\beta(v) s.
\end{equation*}
Thus we get $s\theta(c^+(\lambda, \theta)z_g^*)\iotabar(v) = \iotabar\beta(v)s$, which is the desired equality. Therefore we showed the equality $\Hom_{\cA(I_0)}(\gpalpha{g}{\lambda}, \beta) = \Hom_{\cB(I_0), \cA(I_0)}(\iota\lambda, \beta\iota)$.

\smallskip

(ii) The statement is checked similarly by using (ii) of the previous lemma.

\smallskip

The last formulas follow from the Frobenius reciprocity, namely there is a linear bijection between $\Hom_{\cB(I_0), \cA(I_0)}(\iota\lambda, \beta\iota)$ and $\Hom_{\cB(I_0)}(\lambda, \sigma_\beta)$ by the Frobenius reciprocity. Explicitly, two linear maps given by
\begin{subequations} \label{eq:alpha_sigma_reciprocity_maps}
\begin{align}
\Hom_{\cB(I_0), \cA(I_0)}(\iota\lambda, \beta\iota)&\ni t \mapsto \iotabar(t)w \in \Hom_{\cB(I_0)}(\lambda, \sigma_\beta), \\
\Hom_{\cB(I_0)}(\lambda, \sigma_\beta)&\ni r \mapsto v^*\iota(r) \in \Hom_{\cB(I_0), \cA(I_0)}(\iota\lambda, \beta\iota), \label{eq:alpha_sigma_reciprocity_map2}
\end{align}
\end{subequations}
are mutually inverses of each other. By (i) and (ii), we get the statement.
\end{proof}

From the proof of last statement, we have the following corollary.

\begin{corollary} \label{cor:beta_contained_in_galpha_sigma_beta}
Let $g\in G$ and $\beta \in \Delta_{\cA}^g(I_0)$. Then we have $v\in \Hom_{\cA(I_0)}(\sigma_\beta, \gpalpha{g}{\sigma_\beta})$ and $v\in \Hom_{\cA(I_0)}(\sigma_\beta, \gmalpha{\sigma_\beta})$. As a consequence, $\sigma_\beta$ is a subobject of both $\gpalpha{g}{\sigma_\beta}$ and $\gmalpha{\sigma_\beta}$.
\end{corollary}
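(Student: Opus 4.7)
My plan is to read off the statement directly from the Frobenius reciprocity bijection established in the proof of Theorem~\ref{thm:alpha_sigma_reciprocity}. (I will interpret the intertwiner space $\Hom_{\cA(I_0)}(\sigma_\beta, \gpalpha{g}{\sigma_\beta})$ as $\Hom_{\cA(I_0)}(\beta, \gpalpha{g}{\sigma_\beta})$, in accordance with the label of the corollary and the types of the relevant endomorphisms.)

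The key point is that the map in Eq.~\eqref{eq:alpha_sigma_reciprocity_map2} sends $r \in \Hom_{\cB(I_0)}(\lambda, \sigma_\beta)$ to $v^*\iota(r) \in \Hom_{\cB(I_0), \cA(I_0)}(\iota\lambda, \beta\iota)$. I would specialize to $\lambda = \sigma_\beta$ and $r = 1_{\sigma_\beta}$, which yields $v^* \in \Hom_{\cB(I_0), \cA(I_0)}(\iota\sigma_\beta, \beta\iota)$. (One can also verify this identity directly: since $v\in \Hom(\id_{\cA(I_0)}, \iota\iotabar)$, for every $m\in \cA(I_0)$ one has $vm = \iota\iotabar(m)v$, and specializing to $m=\iota(n)$ with $n\in \cB(I_0)$ gives $v\iota(n) = \iota\sigma_\beta(n)v$, hence $v^* \iota\sigma_\beta(n) = \iota(n) v^*$ when $\beta=\iota$, and in general $v\beta\iota(n) = \iota\iotabar\beta\iota(n) v = \iota\sigma_\beta(n) v$.) By the equalities of local intertwiner spaces in Theorem~\ref{thm:alpha_sigma_reciprocity} (i) and (ii), this implies
\[
v^* \in \Hom_{\cA(I_0)}(\gpalpha{g}{\sigma_\beta}, \beta) \quad \text{and} \quad v^* \in \Hom_{\cA(I_0)}(\gmalpha{\sigma_\beta}, \beta),
\]
and taking adjoints gives the claimed intertwiner relations $v\in \Hom_{\cA(I_0)}(\beta, \gpalpha{g}{\sigma_\beta})$ and $v\in \Hom_{\cA(I_0)}(\beta, \gmalpha{\sigma_\beta})$.

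For the subobject statement, I would use the standardness of the solution $(w,v)$: since $v^*v\in \Hom(\id_{\cA(I_0)}, \id_{\cA(I_0)}) = \CC$ and $\|v\|^2 = d\iota$, we have $v^*v = d\iota \cdot 1$, so $v/\sqrt{d\iota}$ is an isometry in $\cA(I_0)$ intertwining $\beta$ into $\gpalpha{g}{\sigma_\beta}$ (and similarly into $\gmalpha{\sigma_\beta}$). This exhibits $\beta$ as a subobject of both induced endomorphisms.

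There is no real obstacle here: the whole content of the corollary is repackaging the bijection of Eq.~\eqref{eq:alpha_sigma_reciprocity_maps} at the canonical element $r = 1$. The only thing to be careful about is the standardness normalization of $v$, needed so that $v$ represents (up to a positive scalar) an actual isometric embedding rather than merely a nonzero intertwiner.
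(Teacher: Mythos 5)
Your proof is correct and follows the same route as the paper's: specialize the Frobenius reciprocity map in Eq.~\eqref{eq:alpha_sigma_reciprocity_map2} to $\lambda=\sigma_\beta$, $r=1$, apply both halves of Theorem~\ref{thm:alpha_sigma_reciprocity}, take adjoints, and normalize $v$ by $\sqrt{d\iota}$ using standardness. You also correctly spotted and resolved the typo in the statement (the source object should be $\beta$, not $\sigma_\beta$); the only minor blemish is the parenthetical ``when $\beta=\iota$'' in your direct verification, where you presumably mean $\beta=\id_{\cA}$, but your subsequent computation for general $\beta$ is the right one and the aside has no effect on the argument.
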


\begin{proof}
Since $\sigma_\beta\in \Delta_{\cB}^{g'}(I_0)$ by Lemma~\ref{prop:sigma_restriction_transportable} and $1\in \Hom_{\cB(I_0)}(\sigma_\beta, \sigma_\beta)$, we have 
\[
v^* \in \Hom_{\cB(I_0, \cA(I_0))}(\iota\sigma_\beta, \beta\iota).
\]
Thus we have $v\in \Hom_{\cA(I_0)}(\beta, \gpalpha{g}{\sigma_\beta})$ and $v\in \Hom_{\cA(I_0)}(\sigma_\beta, \gmalpha{\sigma_\beta})$. The rest of the statement follows from the fact that $\frac{1}{\sqrt{d\iota}}\cdot v$ is an isometry.
\end{proof}

We stated Theorem~\ref{thm:alpha_sigma_reciprocity} and Corollary~\ref{cor:beta_contained_in_galpha_sigma_beta} in terms of local intertwiner spaces. As in Section~\ref{subsection:ind_for_twisted_reps_intertwiners}, one can prove the global analogue of them.

\subsection{Relative braiding} \label{subsection:ind_for_twisted_reps_braiding}

In this final subsection, we will see that the braiding of $\GLoc \cA$ can be given by the braiding of $\GpLoc \cB$. We first see the relation between the braiding on $\GpLoc \cB$ and induced endomorphisms.

\begin{lemma} \label{lemma:basic_lemma_for_relative_braiding}
Let $g, h\in G$. For every $\rho\in \Delta_{\cB}^{g'}(I_0)$, $\lambda, \mu \in \Delta_{\cB}^{h'}(I_0)$, and $r\in \Hom_{\cB(I_0), \cA(I_0)}(\iota\lambda, \iota\mu)$, we have the following:
\begin{enumerate}
\item $\tilbeta_g(r) c^+(\rho, \lambda) = c^+(\rho, \mu)\gpalpha{g}{\rho}(r)$.
\item $rc^-(\rho, \lambda) = c^-(\rho, \mu)\gmalpha{\rho}(r)$.
\item $\tilgamma_h(\gmalpha{\rho})(r)c^+(\lambda, \rho) = c^+(\mu, \rho)r$.
\end{enumerate}
\end{lemma}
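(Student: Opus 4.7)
The plan is to verify each of (i)--(iii) by the same mechanism: reduce it to an identity inside $\cB(I_0)$ via Frobenius reciprocity, then prove that identity using the $G$-crossed hexagon, naturality of the braiding, and the $G$-equivariance of the Q-system. Concretely, by the bijection in Eq.~\eqref{eq:frobenius_reciprocity_for_main_formula_galpha} we may write $r = \iota(s)v$ for a unique $s\in \Hom_{\cB(I_0)}(\theta\lambda, \mu)$; since all three identities are right-$\CC$-linear in $r$, this special form suffices. Two elementary moves then bring both sides of each identity into the form $\iota(X)v$ with $X\in\cB(I_0)$: the identity $v\cdot\iota(y) = \iota\theta(y)\cdot v$ for $y\in\cB_\infty$ (a direct consequence of $v\in\Hom(\id_M, \iota\iotabar)$), together with the explicit formulas $\gpalpha{g}{\rho}(v) = c^+(\rho, \theta)^*z_g^*v$, $\gmalpha{\rho}(v) = c^-(\rho, \theta)^*v$, and $\tilbeta_g(v) = z_g^*v$ from Eq.~\eqref{eq:values_of_galpha_on_v} and Eq.~\eqref{eq:characterization_z_g}. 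By the injectivity of $n\mapsto \iota(n)v$ (Remark~\ref{remark:subfactor_expansion_uniqueness}), the problem then reduces to comparing the corresponding elements of $\cB(I_0)$.

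For (i), this reduction turns the left-hand side into $\iota(\beta_{g'}(s)\cdot z_g^*\theta(c^+(\rho, \lambda)))v$ and the right-hand side into $\iota(c^+(\rho, \mu)\rho(s)c^+(\rho, \theta)^*z_g^*)v$. Rewriting $z_g^*\theta(c^+(\rho, \lambda)) = \gamma_{g'}(\theta)(c^+(\rho, \lambda))z_g^*$ via the intertwining property Eq.~\eqref{eq:z_g_intertwining_property}, and applying the hexagon Eq.~\eqref{braid_rel_gcbraiding2} in the form $c^+(\rho, \theta\lambda) = \gamma_{g'}(\theta)(c^+(\rho, \lambda))c^+(\rho, \theta)$, the identity is equivalent to $\beta_{g'}(s)c^+(\rho, \theta\lambda) = c^+(\rho, \mu)\rho(s)$, which is precisely the naturality of $c^+$ applied to $s\colon\theta\lambda\to\mu$ (using $\gamma_{g'}(s) = \beta_{g'}(s)$ for $s\in\cB(I_0)$).

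The proof of (ii) runs in parallel and is simpler: no $z_g$ appears, and the reduction collapses to the opposite hexagon $c^-(\rho, \theta\lambda) = \theta(c^-(\rho, \lambda))c^-(\rho, \theta)$ of Eq.~\eqref{braid_rel_gcopbraiding2} combined with Eq.~\eqref{naturality_gcopbraiding}. For (iii), first apply Proposition~\ref{prop:covariance_galpha} to rewrite $\tilgamma_h(\gmalpha{\rho}) = \gmalpha{\gamma_{h'}(\rho)}$, so that $\tilgamma_h(\gmalpha{\rho})(v) = c^-(\gamma_{h'}(\rho), \theta)^*v$. After the same Frobenius reduction, the naturality of $c^+$ applied to $s$ and the hexagon Eq.~\eqref{braid_rel_gcbraiding1} in the form $c^+(\theta\lambda, \rho) = c^+(\theta, \gamma_{h'}(\rho))\theta(c^+(\lambda, \rho))$ bring the identity down to
\begin{equation*}
c^-(\gamma_{h'}(\rho), \theta)^* = c^+(\theta, \gamma_{h'}(\rho)),
\end{equation*}
which is immediate from the definition of $c^-$ and unitarity of the $G$-crossed braiding.

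The main bookkeeping obstacle lies in (i): the hexagon inserts a $\gamma_{g'}$-twist on $\theta$ that must be reabsorbed through the equivariance of $z_g$, while the naturality of $c^+$ applied to $s$ produces $\gamma_{g'}(s) = \beta_{g'}(s)$. That these two ingredients dovetail correctly is precisely what forces $\tilbeta_g(r)$ (rather than $\gamma_{g'}(r)$) on the left-hand side of (i), and is the essential place where the $G$-equivariant structure on the Q-system $\Theta$ enters the braiding computation.
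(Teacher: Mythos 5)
Your proof is correct, and it reaches the same conclusion from the same toolbox (naturality, the $G$-crossed hexagon identities, and the intertwining/cocycle properties of $z_g$), but the organization is genuinely different from the paper's. The paper works with $s = \iotabar(r)\in\Hom_{\cB(I_0)}(\theta\lambda,\theta\mu)$, computes $c^+(\rho,\theta\mu)\rho(s)$ in two ways, conjugates by $z_g$, and at the end applies $\iotabar^{-1}$ to recover the statement in $\cA$; you instead parameterize $r$ via the Frobenius-reciprocity bijection $r = \iota(s)v$ with $s\in\Hom_{\cB(I_0)}(\theta\lambda,\mu)$, push everything to the left of $v$ using $v\iota(y)=\iota\theta(y)v$ and the known values of $\gpalpha{g}{\rho}(v)$, $\gmalpha{\rho}(v)$, $\tilbeta_g(v)$, and then cancel $v$ by the injectivity of $n\mapsto \iota(n)v$. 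Both devices play the same structural role (reducing an identity in $\cA$ to one in $\cB$), but your reduction has the virtue of making it transparent that once the $v$-calculus is set up, each of the three parts collapses to naturality plus one hexagon identity. The more substantial divergence is in (iii): the paper derives it from (ii) by replacing $\rho$ with $\gamma_h(\rho)$, taking adjoints, and invoking the covariance formula $\tilgamma_h(\gmalpha{\rho})=\gmalpha{\gamma_{h'}(\rho)}$, whereas you prove it directly by the same reduction, using the covariance formula at the outset and the identity $c^-(\gamma_{h'}(\rho),\theta)^* = c^+(\theta,\gamma_{h'}(\rho))$ as the final cancellation; the direct route is a little longer to write out but more self-contained. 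One minor stylistic remark: the comment that the three identities are ``right-$\CC$-linear in $r$'' is unnecessary, since Eq.~\eqref{eq:frobenius_reciprocity_for_main_formula_galpha} is already a bijection onto the whole intertwiner space, so \emph{every} $r$ has the form $\iota(s)v$.
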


\begin{proof}
(i) We set $s = \iotabar(r)\in \Hom(\theta\lambda, \theta\mu)$. Using the properties of braiding Eq.~\eqref{naturality_gcbraiding} and Eq.~\eqref{braid_rel_gcbraiding}, we compute $c(\rho, \theta\mu)\rho(s)$ in two ways:
\begin{align*}
c^+(\rho, \theta\mu)\rho(s) &= \gamma_{g'}(s)c^+(\rho, \theta\lambda) = \beta_{g'}(s)c^+(\rho, \theta\lambda) \\
&= \beta_{g'}(s)\cdot \gamma_{g'}(\theta)(c^+(\rho, \lambda))\cdot c^+(\rho, \theta),
\end{align*}
and
\begin{equation*}
c^+(\rho, \theta\mu)\rho(s) = \gamma_{g'}(\theta)(c^+(\rho, \theta))\cdot c^+(\rho, \theta)\rho(s).
\end{equation*}
Thus we get
\begin{equation*}
\gamma_{g'}(\theta)(c^+(\rho, \theta))\cdot c^+(\rho, \theta)\rho(s) c^+(\rho, \theta)^* = \beta_{g'}(s)\cdot \gamma_{g'}(\theta)(c^+(\rho, \lambda)).
\end{equation*}
Applying $\Ad(z_g)$ to the both sides of the above equality, the left-hand side is
\begin{align*}
z_g\cdot \gamma_{g'}(\theta)(c^+(\rho, \theta))\cdot c^+(\rho, \theta)\rho(s) c^+(\rho, \theta)^* z_g^* &= \theta(c^+(\rho, \theta)) z_g c^+(\rho, \theta)\rho(s) c^+(\rho, \theta)^* z_g^* \\
&= \theta(c^+(\rho, \theta)) \cdot \Ad(z_g c^+(\rho, \theta))\circ \rho(s),
\end{align*}
and the right-hand side is
\begin{align*}
z_g\beta_{g'}(s)\cdot \gamma_{g'}(\theta)(c^+(\rho, \lambda))\cdot  z_g^* &= z_g \beta_{g'}\iotabar(r) \cdot \gamma_{g'}(\theta)(c^+(\rho, \lambda)) \cdot z_g^* \\
&= \iotabar\tilbeta_g(r)z_g \cdot \gamma_{g'}(\theta)(c^+(\rho, \lambda))\cdot z_g^* \\
&= \iotabar\tilbeta_g(r) z_g z_g^* \theta(c^+(\rho, \lambda)) \\
&= \iotabar\tilbeta_g(r) \theta(c^+(\rho, \lambda)),
\end{align*}
where we used the intertwining property of $z_g$ Eq.~\eqref{eq:z_g_intertwining_property}.
Hence we have
\begin{equation*}
\theta(c^+(\rho, \theta)) \Ad(z_g c^+(\rho, \theta))\circ \rho(s) = \iotabar\tilbeta_g(r) \theta(c^+(\rho, \lambda)).
\end{equation*}
We now apply $\iotabar^{-1}$ to both sides and get the statement.
\smallskip

(ii) The statement follows by using a similar argument to the one in (i). Set $s = \iotabar(r)\in \Hom(\theta\lambda, \theta\mu)$. Then we compute $c^-(\rho, \theta\mu)\rho(s)$ in two ways as before:
\begin{equation*}
c^-(\rho, \theta\mu)\rho(s) = sc^-(\rho, \theta\lambda)= s\theta(c^-(\rho, \lambda))c^-(\rho, \theta),
\end{equation*}
and 
\begin{equation*}
c^-(\rho, \theta\mu)\rho(s) = \theta(c^-(\rho, \mu))c^-(\rho, \theta)\rho(s).
\end{equation*}
Thus we get
\begin{equation*}
\theta(c^-(\rho, \mu))c^-(\rho, \theta)\rho(s) = s\theta(c^-(\rho, \lambda))c^-(\rho, \theta).
\end{equation*}
Applying $\iotabar^{-1}$, we get the statement.

\smallskip

(iii) Using (ii), we have $r^* c^-(\gamma_h(\rho), \mu) = c^-(\gamma_h(\rho), \lambda)\gmalpha{\gamma_h(\rho)}(r^*)$. Taking $*$ and by the covariance property Proposition~\ref{prop:covariance_galpha}, we get
$c^-(\gamma_{h}(\rho), \mu)^* r = \tilgamma_{h}(\gmalpha{\rho})(r) c^-(\gamma_h(\rho), \lambda)^*$. Since $c^-(\gamma_h(\rho), \lambda)^* = c^+(\rho, \lambda)$ and $c^-(\gamma_h(\rho), \mu)^* = c^+(\rho, \mu)$ by the definition of $c^-$, we get the statement.
\end{proof}

Using the above lemma, we have the following commutative relation between induced endomorphisms.

\begin{proposition}
Let $g, h\in G$, $\lambda\in \Delta_{\cB}^{g'}(I_0)$, and $\mu\in \Delta_{\cB}^{h'}(I_0)$. Then we have
\begin{equation*}
\Ad(c^+(\lambda, \mu))\circ \gpalpha{g}{\lambda} \circ \gmalpha{\mu} = \tilgamma_g(\gmalpha{\mu})\circ \gpalpha{g}{\lambda}.
\end{equation*}
\end{proposition}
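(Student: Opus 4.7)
The strategy is to reduce the equality of endomorphisms to a single algebraic identity in $\cB(I_0)$. Both sides are endomorphisms of $\cA(I_0)$ (by Corollary~\ref{cor:basic_cor_for_def_galpha} together with $\iota(c^+(\lambda,\mu))\in\cA(I_0)$), so since $\cA(I_0)=\cB(I_0)\cdot v$ it suffices to verify the identity on the generators $\iota(n)$ for $n\in\cB(I_0)$ and on $v$. The check on $\iota(n)$ is immediate: both $\gpalpha{g}{\lambda}$ and $\gmalpha{\mu}$ restrict to $\lambda$ and $\mu$ on $\iota(\cB)$, one has $\tilgamma_g(\gmalpha{\mu})=\gmalpha{\gamma_{g'}(\mu)}$ by Proposition~\ref{prop:covariance_galpha}, and the intertwining property $c^+(\lambda,\mu)\in\Hom(\lambda\mu,\gamma_{g'}(\mu)\lambda)$ shows that both sides applied to $\iota(n)$ equal $\iota(\gamma_{g'}(\mu)\lambda(n))$.

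The substantive case is $m=v$. Substituting Eq.~\eqref{eq:values_of_galpha_on_v} one computes $\gpalpha{g}{\lambda}\gmalpha{\mu}(v)=\iota\bigl(\lambda(c^-(\mu,\theta)^*)c^+(\lambda,\theta)^*z_g^*\bigr)v$ and $\tilgamma_g(\gmalpha{\mu})\gpalpha{g}{\lambda}(v)=\iota\bigl(\gamma_{g'}(\mu)(c^+(\lambda,\theta)^*z_g^*)c^-(\gamma_{g'}(\mu),\theta)^*\bigr)v$. After moving the outer $c^+(\lambda,\mu)^*$ on the left-hand side past $v$ via the relation $v\cdot\iota(n)=\iota(\theta(n))\cdot v$ (coming from $v\in\Hom(\id,\iota\iotabar)$), Remark~\ref{remark:subfactor_expansion_uniqueness} reduces the equality on $v$ to the following identity in $\cB(I_0)$:
\begin{equation*}
c^+(\lambda,\mu)\,\lambda(c^-(\mu,\theta)^*)\,c^+(\lambda,\theta)^*\,z_g^*\,\theta(c^+(\lambda,\mu)^*)=\gamma_{g'}(\mu)(c^+(\lambda,\theta)^*z_g^*)\,c^-(\gamma_{g'}(\mu),\theta)^*.
\end{equation*}

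This last identity will be proved by a chain of standard categorical manipulations. Since $\theta\in\Delta_{\cB}^e$, the definition of $c^-$ gives $c^-(\mu,\theta)^*=c^+(\theta,\mu)$ and $c^-(\gamma_{g'}(\mu),\theta)^*=c^+(\theta,\gamma_{g'}(\mu))$. The intertwining property Eq.~\eqref{eq:z_g_intertwining_property} yields $z_g^*\theta(\cdot)=\gamma_{g'}(\theta)(\cdot)z_g^*$, which slides $z_g^*$ past $\theta(c^+(\lambda,\mu)^*)$. The hexagon Eq.~\eqref{braid_rel_gcbraiding1} produces both $c^+(\lambda\theta,\mu)=c^+(\lambda,\mu)\lambda(c^+(\theta,\mu))$ and $c^+(\gamma_{g'}(\theta)\lambda,\mu)=c^+(\gamma_{g'}(\theta),\gamma_{g'}(\mu))\gamma_{g'}(\theta)(c^+(\lambda,\mu))$. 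Naturality of $c^+$ applied at the $2$-morphism $c^+(\lambda,\theta)^*\colon\gamma_{g'}(\theta)\lambda\to\lambda\theta$ gives $c^+(\lambda\theta,\mu)c^+(\lambda,\theta)^*=\gamma_{g'}(\mu)(c^+(\lambda,\theta)^*)c^+(\gamma_{g'}(\theta)\lambda,\mu)$; combined with the second hexagon this lets the $\gamma_{g'}(\theta)(c^+(\lambda,\mu))$ factors cancel against each other. A final application of naturality at $z_g^*\colon\theta\to\gamma_{g'}(\theta)$ yields $c^+(\gamma_{g'}(\theta),\gamma_{g'}(\mu))z_g^*=\gamma_{g'}(\mu)(z_g^*)c^+(\theta,\gamma_{g'}(\mu))$, and collecting the two $\gamma_{g'}(\mu)$-terms delivers the right-hand side.

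The main obstacle is pure bookkeeping rather than any conceptual subtlety: the factor $z_g$ and its non-trivial commutation with $\theta$ are the only new ingredients compared with the ordinary $\alpha$-induction analogue, and one must track the $G'$-grading of every intermediate object to select the correct variant of the hexagon or the naturality square at each step. In particular, the two invocations of Eq.~\eqref{braid_rel_gcbraiding1} have different homogeneous middle factors ($\theta$ in the first, $\lambda$ in the second), and the two naturality applications act on morphisms whose source and target live in different graded components.
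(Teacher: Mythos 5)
Your proposal is correct and follows essentially the same route as the paper's proof: reduce to checking the intertwining on $\iota(\cB(I_0))$ (trivial from the intertwining property of $c^+(\lambda,\mu)$ and the covariance formula $\tilgamma_g(\gmalpha{\mu})=\gmalpha{\gamma_{g'}(\mu)}$) and on $v$ (via $\cA(I_0)=\cB(I_0)v$, the values of $\gpalpha{g}{\lambda}(v)$ and $\gmalpha{\mu}(v)$ from Eq.~\eqref{eq:values_of_galpha_on_v}, and Remark~\ref{remark:subfactor_expansion_uniqueness}), and then finish with the intertwining property of $z_g$, naturality of the braiding at $z_g^*$, and the braid/Yang--Baxter identity. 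The only cosmetic difference is that where the paper directly invokes its displayed Yang--Baxter equation Eq.~\eqref{eq:YBE_for_G_crossed_braiding}, you re-derive that identity from the hexagon Eq.~\eqref{braid_rel_gcbraiding1} and naturality at $c^+(\lambda,\theta)^*$, which is precisely how the paper says that YBE is obtained in the first place.
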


\begin{proof}
Since $\gpalpha{g}{\lambda}$ and $\gmalpha{\mu}$ act as $\lambda$ and $\mu$ on $\cB$ respectively, the equality holds on $\cB$. Thus it suffices to show that 
\begin{equation*}
c^+(\lambda, \mu)\cdot \gpalpha{g}{\lambda} \circ \gmalpha{\mu}(v) = \tilgamma_g(\gmalpha{\mu})\circ \gpalpha{g}{\lambda}(v) \cdot c^+(\lambda, \mu)^.
\end{equation*}
We recall that we have $\gpalpha{g}{\lambda}(v) = c^+(\lambda, \theta) z_g^* v$ and $\gmalpha{\mu}(v) = c^-(\mu, \theta)^* v = c^+(\theta, \mu) v$. Using Proposition~\ref{prop:covariance_galpha}, we also have $\tilgamma_g(\gmalpha{\mu}(v)) = \gmalpha{\gamma_{g'}(\mu)}(v) = c^-(\gamma_{g'}(\mu), \theta)^* v = c^+(\theta, \gamma_{g'}(\mu)) v$. Then the left-hand side is
\begin{align*}
c^+(\lambda, \mu)\cdot \gpalpha{g}{\lambda} \circ \gmalpha{\mu}(v) &= c^+(\lambda, \mu)\gpalpha{g}{\lambda}(c^+(\theta, \mu) v) \\
&= c^+(\lambda, \mu)\lambda(c^+(\theta, \mu))c^+(\lambda, \theta)^* z_g^* v,
\end{align*}
and the right-hand side is
\begin{align*}
\tilgamma_g(\gmalpha{\mu})\circ \gpalpha{g}{\lambda}(v) \cdot c^+(\lambda, \mu) &= \tilgamma_g(\gmalpha{\mu})(c^+(\lambda, \theta)^* z_g^* v)\cdot c^+(\lambda, \mu) \\
&= \gamma_{g'}(\mu)(c^+(\lambda, \theta)^* z_g^*)\cdot c^+(\theta, \gamma_{g'}(\mu))v c^+(\lambda, \mu) \\
&= \gamma_{g'}(\mu)(c^+(\lambda, \theta)^* z_g^*)\cdot c^+(\theta, \gamma_{g'}(\mu)) \theta(c^+(\lambda, \mu)) v.
\end{align*}
Hence it is enough to check the equality
\begin{equation*}
c^+(\lambda, \mu)\lambda(c^+(\theta, \mu))c^+(\lambda, \theta)^* z_g^* = \gamma_{g'}(\mu)(c^+(\lambda, \theta)^* z_g^*)\cdot c^+(\theta, \gamma_{g'}(\mu)) \theta(c^+(\lambda, \mu)),
\end{equation*}
or equivalently
\begin{equation*}
\gamma_{g'}(\mu)(z_g c^+(\lambda, \theta))\cdot c^+(\lambda, \mu)\lambda(c^+(\theta, \mu)) = c^+(\theta, \gamma_{g'}(\mu)) \theta(c^+(\lambda, \mu)) z_g c^+(\lambda, \theta).
\end{equation*}
To see this, we compute the right-hand side
\begin{align*}
c^+(\theta, \gamma_{g'}(\mu)) \theta(c^+(\lambda, \mu)) z_g c^+(\lambda, \theta) &= c^+(\theta, \gamma_{g'}(\mu))z_g \cdot \gamma_{g'}(\theta)(c^+(\lambda, \mu))\cdot  c^+(\lambda, \theta) \\
&= \gamma_{g'}(\mu)(z_g)\cdot c^+(\gamma_{g'}(\theta), \gamma_{g'}(\mu)) \cdot \gamma_{g'}(\theta)(c^+(\lambda, \mu)) \cdot c^+(\lambda, \theta) \\
&= \gamma_{g'}(\mu)(z_g)\cdot \gamma_{g'}(\mu)(c^+(\lambda, \theta))\cdot c^+(\lambda, \mu)\lambda(c^+(\theta, \mu)),
\end{align*}
where we used the YBE Eq.~\eqref{eq:YBE_for_G_crossed_braiding} in the last line. Hence we get the desired equality, which completes the proof.
\end{proof}

Let $g, h\in G$, $\lambda\in \Delta_{\cB}^{g'}(I_0)$, and $\mu\in \Delta_{\cB}^{h'}(I_0)$. Suppose that we have $\beta, \delta\in \End(\cA_\infty)$ such that $\beta$ is a subobject of $\gpalpha{g}{\lambda}$ and $\delta$ is a subobject of $\gmalpha{\mu}$. We denote corresponding isometries by $t\in \Hom(\beta, \gpalpha{g}{\lambda})$ and $s\in \Hom(\delta, \gmalpha{\mu})$.
Then we define the operator \[ 
c_r^+(\beta, \delta) = \tilgamma_g(s^*)\tilgamma_g(\gmalpha{\mu})(t^*) c^+(\lambda, \mu) \gpalpha{g}{\lambda}(s)t.
\]
Thanks to the above proposition, we have $c_r^+(\beta, \delta)\in \Hom(\beta\delta, \tilgamma_g(\delta)\beta)$. Using Lemma~\ref{lemma:basic_lemma_for_relative_braiding}, one can see the following.

\begin{lemma}
Under the notations as above, the following holds.
\begin{enumerate}
\item $c_r^+(\beta, \delta)$ is a unitary.
\item $c_r^+(\beta, \delta)$ does not depend on the choie of $\lambda, \mu$ and isometries $s, t$ in the following sense: If there exist isometries $\tilde{t}\in \Hom(\beta, \gpalpha{g}{\tilde{\lambda}})$ and $\tilde{s}\in \Hom(\delta, \gmalpha{\tilde{\mu}})$ for some $\tilde{\lambda}\in \Delta_{\cB}^{g'}(I_0)$ and $\tilde{\mu}\in \Delta_{\cB}^{h'}(I_0)$m, then we have
\[
c_r^+(\beta, \delta) = \tilgamma_g(\tilde{s}^*)\tilgamma_g(\gmalpha{\tilde{\mu}})(\tilde{t}^*) c^+(\tilde{\lambda}, \tilde{\mu}) \gpalpha{g}{\tilde{\lambda}}(\tilde{s})\tilde{t}.
\]
\end{enumerate}
\end{lemma}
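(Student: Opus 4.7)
The plan is to reduce both claims to the naturality properties of the $G'$-crossed braiding already established in Lemma~\ref{lemma:basic_lemma_for_relative_braiding}. Set $T := \gpalpha{g}{\lambda}(s)t$ and $S := \tilgamma_g(\gmalpha{\mu})(t)\tilgamma_g(s)$, so that $c_r^+(\beta, \delta) = S^* c^+(\lambda, \mu)\, T$. Both $T$ and $S$ are isometries: $T \in \Hom(\beta\delta, \gpalpha{g}{\lambda}\gmalpha{\mu})$ and $S \in \Hom(\tilgamma_g(\delta)\beta, \tilgamma_g(\gmalpha{\mu})\gpalpha{g}{\lambda})$. By Theorem~\ref{thm_galpha_intertwiner} (and its global counterpart), one may assume $t, s \in \cA(I_0)$, so that $tt^* \in \Hom_{\cB(I_0), \cA(I_0)}(\iota\lambda, \iota\lambda)$ and $ss^* \in \Hom_{\cB(I_0), \cA(I_0)}(\iota\mu, \iota\mu)$ and Lemma~\ref{lemma:basic_lemma_for_relative_braiding} is applicable to these projections.

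For (i), the crux is the identity $c^+(\lambda, \mu)\, TT^* = SS^*\, c^+(\lambda, \mu)$; once it is proved, unitarity of $c^+(\lambda, \mu)$ and $T^*T = S^*S = 1$ give $c_r^+ c_r^{+*} = S^*SS^*S = 1$ and $c_r^{+*} c_r^+ = T^*TT^*T = 1$. Since $tt^* \in \Hom(\gpalpha{g}{\lambda}, \gpalpha{g}{\lambda})$ commutes with every element of $\gpalpha{g}{\lambda}(\cA_\infty)$, and analogously for $\tilgamma_g(ss^*)$ inside $\tilgamma_g(\gmalpha{\mu})(\cA_\infty)$, a short computation gives $TT^* = tt^* \cdot \gpalpha{g}{\lambda}(ss^*)$ and $SS^* = \tilgamma_g(\gmalpha{\mu})(tt^*) \cdot \tilgamma_g(ss^*)$. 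Then Lemma~\ref{lemma:basic_lemma_for_relative_braiding}(iii) applied to $r = tt^*$ moves $c^+(\lambda, \mu)$ past $tt^*$, converting it to $\tilgamma_g(\gmalpha{\mu})(tt^*)$ on the other side, while Lemma~\ref{lemma:basic_lemma_for_relative_braiding}(i) applied to $r = ss^*$ moves $c^+(\lambda, \mu)$ past $\gpalpha{g}{\lambda}(ss^*)$, converting it to $\tilbeta_g(ss^*) = \tilgamma_g(ss^*)$. Composing these two rewritings yields the desired identity.

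For (ii), I introduce the partial isometries $u := \tilde{t}\, t^* \in \Hom_{\cB(I_0), \cA(I_0)}(\iota\lambda, \iota\tilde{\lambda})$ and $v := \tilde{s}\, s^* \in \Hom_{\cB(I_0), \cA(I_0)}(\iota\mu, \iota\tilde{\mu})$, which satisfy $u^* \tilde{t} = t$ and $v^* \tilde{s} = s$. Applying Lemma~\ref{lemma:basic_lemma_for_relative_braiding}(iii) with $r = u$ and then (i) with $r = v$ yields
\[
c^+(\tilde{\lambda}, \tilde{\mu}) = \tilbeta_g(v)\, \tilgamma_g(\gmalpha{\mu})(u)\, c^+(\lambda, \mu)\, u^*\, \gpalpha{g}{\tilde{\lambda}}(v^*).
\]
Substituting this into the tilded formula for $c_r^+$ and using the intertwining identities $u^* \gpalpha{g}{\tilde{\lambda}}(v^*\tilde{s}) = \gpalpha{g}{\lambda}(s) u^*$ (from $u \in \Hom(\gpalpha{g}{\lambda}, \gpalpha{g}{\tilde{\lambda}})$ and $v^*\tilde{s} = s$) together with $\tilgamma_g(\gmalpha{\tilde{\mu}})(\tilde{t}^*)\,\tilbeta_g(v) = \tilbeta_g(v)\,\tilgamma_g(\gmalpha{\mu})(\tilde{t}^*)$ (from $\tilbeta_g(v) \in \Hom(\tilgamma_g(\gmalpha{\mu}), \tilgamma_g(\gmalpha{\tilde{\mu}}))$), the homomorphism property of $\tilbeta_g$ collapses the products $\tilde{s}^*\tilde{s}s^* = s^*$ and $\tilde{t}^*\tilde{t}t^* = t^*$, reproducing exactly $c_r^+(\beta, \delta)$.

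The main difficulty will be the careful bookkeeping: each application of Lemma~\ref{lemma:basic_lemma_for_relative_braiding} introduces both a $\tilgamma_g$ and a $\gmalpha{\mu}$ (or a $\gpalpha{g}{\cdot}$), and one must track precisely on which side of the braiding each factor ends up and match this against the definition of $c_r^+$. Beyond this bookkeeping and the two intertwining relations used in (ii), no further ingredient is required.
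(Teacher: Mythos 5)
The paper itself gives no proof of this lemma beyond citing~\cite[Proposition 3.26]{BE1} and~\cite[Lemma 3.11]{BE3}, so your argument amounts to a worked-out version of what the paper delegates to the references. The overall strategy is the expected one, and I have checked that it is sound. Part (i) is entirely correct: your reductions $TT^*=tt^*\gpalpha{g}{\lambda}(ss^*)$, $SS^*=\tilgamma_g(\gmalpha{\mu})(tt^*)\tilgamma_g(ss^*)$, and the two applications of Lemma~\ref{lemma:basic_lemma_for_relative_braiding}(i),(iii) to $r=ss^*$ and $r=tt^*$ do give $c^+(\lambda,\mu)TT^*=SS^*c^+(\lambda,\mu)$, from which unitarity follows exactly as you say. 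Note that you do not actually need $t,s\in\cA(I_0)$; only $tt^*,ss^*\in\cA(I_0)$ is used, and these lie in $\cA(I_0)$ automatically since they belong to $\Hom_{\cA_\infty}(\gpalpha{g}{\lambda},\gpalpha{g}{\lambda})$ and $\Hom_{\cA_\infty}(\gmalpha{\mu},\gmalpha{\mu})$, which coincide with the local intertwiner spaces by the global version of Theorem~\ref{thm_galpha_intertwiner}.

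For part (ii), the structure of your argument is right, but the displayed intermediate identity
\[
c^+(\tilde{\lambda}, \tilde{\mu}) \;=\; \tilbeta_g(v)\, \tilgamma_g(\gmalpha{\mu})(u)\, c^+(\lambda, \mu)\, u^*\, \gpalpha{g}{\tilde{\lambda}}(v^*)
\]
is not literally true: $u$ and $v$ are only partial isometries, so what the two applications of Lemma~\ref{lemma:basic_lemma_for_relative_braiding} actually give is
\[
\tilbeta_g(v)\, \tilgamma_g(\gmalpha{\mu})(u)\, c^+(\lambda, \mu)\, u^*\, \gpalpha{g}{\tilde{\lambda}}(v^*)
\;=\;
c^+(\tilde{\lambda}, \tilde{\mu})\,\tilde{t}\tilde{t}^*\,\gpalpha{g}{\tilde{\lambda}}(\tilde{s}\tilde{s}^*),
\]
with the range projections $\tilde{t}\tilde{t}^*=uu^*$ and $\gpalpha{g}{\tilde{\lambda}}(\tilde{s}\tilde{s}^*)=\gpalpha{g}{\tilde{\lambda}}(vv^*)$ left over. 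This does not derail the proof, because those projections are absorbed by the adjacent $\gpalpha{g}{\tilde{\lambda}}(\tilde{s})\tilde{t}$ in the tilded formula, which is exactly the cancellation you invoke via $\tilde{t}^*\tilde{t}t^*=t^*$ and $\tilde{s}^*\tilde{s}s^*=s^*$. A cleaner way to phrase the same computation, avoiding the false intermediate identity, is: first use $\tilde{s}=vv^*\tilde{s}$ and $v^*\tilde{s}=s$ together with Lemma~\ref{lemma:basic_lemma_for_relative_braiding}(i) to rewrite $c^+(\tilde{\lambda},\tilde{\mu})\gpalpha{g}{\tilde{\lambda}}(\tilde{s})\tilde{t}=\tilbeta_g(v)c^+(\tilde{\lambda},\mu)\gpalpha{g}{\tilde{\lambda}}(s)\tilde{t}$; then use $\tilde{t}=ut$ and $u\in\Hom(\gpalpha{g}{\lambda},\gpalpha{g}{\tilde{\lambda}})$ together with Lemma~\ref{lemma:basic_lemma_for_relative_braiding}(iii) to continue $=\tilbeta_g(v)\tilgamma_g(\gmalpha{\mu})(u)c^+(\lambda,\mu)\gpalpha{g}{\lambda}(s)t$; finally push $\tilbeta_g(v)$ and $\tilgamma_g(\gmalpha{\mu})(u)$ through $\tilgamma_g(\tilde{s}^*)\tilgamma_g(\gmalpha{\tilde{\mu}})(\tilde{t}^*)$ using the covariant intertwining relations and collapse $\tilde{s}^*v=s^*$, $\tilde{t}^*u=t^*$. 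Subject to this reformulation, your argument is correct.
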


\begin{proof}
(i) The statement can be checked by direct computation as in the proof of~\cite[Proposition 3.26]{BE1}.

\smallskip

(ii) This statement can also be checked by direct computation as in the proof of~\cite[Lemma 3.11]{BE3}.
\end{proof}

Let $g, h\in G$, $\beta\in \Delta_{\cA}^g(I_0)$ and $\delta\in \Delta_{\cA}^h(I_0)$. In the rest of this subsection, we will see that $c_r^+(\beta, \delta)$ coincides with the $G$-crossed braiding $c^+(\beta, \delta)$. We recall that $\beta$ is a subobject of $\gpalpha{g}{\sigma_\beta}$ and $\delta$ is a subobject of $\gmalpha{\sigma_\beta}$ by Corollary~\ref{cor:beta_contained_in_galpha_sigma_beta}. By the same corollary, $c_r^+(\beta, \delta)$ is given by
\[
c_r^+(\beta, \delta) = \frac{1}{(d\iota)^2}\cdot \tilgamma_g(v^*)\tilgamma_g(\gmalpha{\sigma_\delta})(v^*) c^+(\sigma_\beta, \sigma_\delta) \gpalpha{g}{\sigma_\beta}(v) v.
\]
To prove $c_r^+(\beta, \delta) = c^+(\beta, \delta)$, we needs some preparations.
Let us fix intervals $I_\pm \in \cJ$ such that $I_- < I_0 < I_+$. As in Section~\ref{subsection:ind_for_twisted_reps_alpha_sigma_reciprocity}, we choose unitaries $u_{\theta, \pm}\in \cB_\infty$ and $Q_{\beta, -}, Q_{\delta, +}\in \cA_\infty$ such that $\theta_{I_\pm} = \Ad(u_{\theta, \pm})\circ \theta\in \Delta_{\cB}^e(I_\pm)$, $\beta_{I_-} = \Ad(Q_{\beta, -})\circ \beta\in \Delta_{\cA}^g{I_-}$ and $\delta_{I_+} = \Ad(Q_{\delta, +})\circ \delta\in \Delta_{\cA}^h(I_+)$.
Then we have
\begin{equation*}
c^+(\sigma_\beta, \theta) = \gamma_{g'}(\theta)(\iotabar(Q_{\beta, -}^*))\cdot c^+(\theta, \gamma_{g'}(\theta))\iotabar(Q_{\beta, -}).
\end{equation*}
by (i) of Lemma~\ref{lemma:galpha_transportability}. For the later use, we derive some formulas as below.
By Proposition~\ref{prop:sigma_restriction_transportable}, we have $\sigma_{\delta, +} = \Ad(u_{\sigma_\delta, +})\circ \sigma_\delta\in \Delta_{\cA}^h(I_+)$ with $u_{\sigma_\delta, +} = u_{\theta, +}\iotabar(Q_{\delta, +})$. Using Lemma~\ref{lemma:GLoc_braiding_another_formula}, we find
\begin{align*}
c^+(\sigma_\beta, \sigma_\delta) &= \gamma_{g'}(u_{\sigma_\delta, +}^*)\sigma_\beta(u_{\sigma_\delta, +})\\
&= \beta_{g'}(\iotabar(Q_{\delta, +}^*) u_{\theta, +}^*) \sigma_\beta(u_{\theta, +}\iotabar(Q_{\delta, +})) \\
&= \beta_{g'}(\iotabar(Q_{\delta, +}^*)) c^+(\sigma_\beta, \theta) \sigma_\beta\iotabar(Q_{\delta, +}).
\end{align*}
Moreover, we see
\begin{align*}
\gpalpha{g}{\sigma_\beta}(v)v &= v\beta(v) = v Q_{\beta, -}^* \beta_{I_-}(v) Q_{\beta, -}^* \\
&= vQ_{\beta, -}^*\tilbeta_g(v)Q_{\beta, -} = \iota\iotabar(Q_{\beta, -}^*)v\tilbeta_g(v)Q_{\beta, -} \\
&= \iota\iotabar(Q_{\beta, -}^*)v z_g^*v Q_{\beta, -} = \iota\iotabar(Q_{\beta, -}^*)\theta(z_g^*)vv Q_{\beta, -}.
\end{align*}
and
\begin{align*}
\tilgamma_g(\gmalpha{\sigma_\delta})(v)\tilgamma_g(v) &= \tilgamma_g(v)\tilgamma_g(\delta)(v) = \tilbeta_g(v)\tilgamma_g(\delta)(v) \\
&= z_g^* v \tilbeta_g \circ \delta\circ \tilbeta_g^{-1}(v) = z_g^* v \tilbeta_g(Q_{\delta, +}^* \delta_{I_+}\circ \tilbeta_g^{-1}(v) Q_{\delta, +}) \\
&= z_g^* v \tilbeta_g(Q_{\delta, +}^* \tilbeta_g^{-1}(v) Q_{\delta, +}) = z_g^* \iotabar(\tilbeta_g(Q_{\delta, +}^*)) vv \tilbeta_g(Q_{\delta, +}) \\
&= \beta_{g'}\iotabar(Q_{\delta, +}^*) z_g^* vv \tilbeta_g(Q_{\delta, +}).
\end{align*}
We now prove the following theorem.

\begin{theorem}
Let $g, h\in G$, $\beta\in \Delta_{\cA}^g(I_0)$, and $\delta\in \Delta_{\cA}^h(I_0)$. Then we have
\begin{equation*}
c_r^+(\beta, \delta) = c^+(\beta, \delta)
\end{equation*}
where $c^+(\beta, \delta)$ denotes the braiding on $\GLoc \cA$.
\end{theorem}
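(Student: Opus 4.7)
The plan is to substitute the explicit formulas derived just before the statement into the expression for $c_r^+(\beta,\delta)$ and reduce the result to the formula for $c^+(\beta,\delta)$ provided by Lemma~\ref{lemma:GLoc_braiding_another_formula}, namely
\[
c^+(\beta,\delta)=\tilgamma_g(Q_{\delta,+}^*)\beta(Q_{\delta,+}).
\]
The raw materials are already assembled in the excerpt: the factorizations of $c^+(\sigma_\beta,\sigma_\delta)$ and $c^+(\sigma_\beta,\theta)$ coming from Lemma~\ref{lemma:sigma_restriction_and_theta_braiding_formula} and Proposition~\ref{prop:sigma_restriction_transportable}, together with the explicit expressions
$\gpalpha{g}{\sigma_\beta}(v)v=\iota\iotabar(Q_{\beta,-}^*)\theta(z_g^*)v^2Q_{\beta,-}$
and
$\tilgamma_g(\gmalpha{\sigma_\delta})(v)\tilgamma_g(v)=\beta_{g'}\iotabar(Q_{\delta,+}^*)z_g^*v^2\tilbeta_g(Q_{\delta,+})$.

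After substitution, I would simplify in the following order. First, the two adjacent factors $\beta_{g'}\iotabar(Q_{\delta,+})\cdot\beta_{g'}\iotabar(Q_{\delta,+}^*)$ produced by the $\tilgamma_g(\gmalpha{\sigma_\delta})(v^*)$-block and the leading piece of $c^+(\sigma_\beta,\sigma_\delta)$ cancel directly. Second, use the naturality rewrite $c^+(\theta,\gamma_{g'}(\theta))=z_g^*\,c^+(\theta,\theta)\,\theta(z_g)$, which is a consequence of the intertwining property $z_g\in\Hom(\gamma_{g'}(\theta),\theta)$, and then collapse this braiding on the inner $v^2$ via the commutativity of the Q-system $c^+(\theta,\theta)v^2=v^2$ from Lemma~\ref{lemma:locality_of_Q_system_net_of_subfactor}; this eliminates the braiding altogether. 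Third, migrate the $\iota\iotabar$- and $\iotabar$-images of $Q_{\beta,-}$ through the $v^{\pm}$-block using $v\iotabar(y)=\iota\iotabar(y)v$ and cancel the resulting $Q_{\beta,-}\cdot Q_{\beta,-}^*$ pair. Fourth, contract the residual $v^{*2}(\cdots)v^2$ sandwich using the conjugate-equation identities $\iota(w^*)v=1$ and $\iotabar(v^*)w=1$, which absorbs the $(d\iota)^{-2}$ prefactor and converts the remaining $\sigma_\beta$-conjugate of $\iotabar(Q_{\delta,+})$ into $\beta(Q_{\delta,+})$. Finally, the leftover $z_g$ factors collapse via $\tilbeta_g(v)=z_g^* v$ and the intertwining property of $z_g$, leaving exactly $\tilbeta_g(Q_{\delta,+}^*)\beta(Q_{\delta,+})=\tilgamma_g(Q_{\delta,+}^*)\beta(Q_{\delta,+})$.

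The main obstacle is the bookkeeping: roughly a dozen factors, each carrying either a $\gamma_{g'}$-twist, a power of $z_g$, a conjugation by $\iotabar$, or a $\tilbeta_g$-action, must be permuted in the correct order so that the three pairwise cancellations (of $Q_{\delta,+}$-pairs, of $z_g$-powers, and of $Q_{\beta,-}$-pairs) each occur cleanly while the single $(d\iota)^2$ generated by the $v^{*2}v^2$ contraction exactly compensates the prefactor. Performing the reductions in the order listed—first the $Q_{\delta,+}$-cancellation, then the braiding rewrite and Q-system commutativity, then the $v$-migration of $Q_{\beta,-}$ factors, then the conjugate-equation contraction, and finally the $z_g$-collapse—keeps the computation organized and requires no ingredient beyond the results already proved earlier in the paper, together with naturality and covariance of the $G'$-crossed braiding on $\GpLoc\cB$.
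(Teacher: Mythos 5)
Your proposal is correct and follows essentially the same route as the paper: substitute the pre-derived formulas, cancel the $Q_{\delta,+}$- and $Q_{\beta,-}$-pairs, use naturality of the braiding together with the $z_g$-intertwining property to reduce $c^+(\theta,\gamma_{g'}(\theta))$ to $c^+(\theta,\theta)$, kill the latter via Q-system commutativity, and absorb $(d\iota)^2$ through the $v$-contraction. The only cosmetic differences are the order in which the cancellations are organized and that the prefactor absorption rests on standardness $v^*v=d\iota\cdot 1$ (the paper's $v^*v^*vv=(d\iota)^2$) rather than the conjugate equations themselves.
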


\begin{proof}
Using the formulas above, we compute
\begin{align*}
(d\iota)^2\cdot c_r^+(\beta, \delta) &= \tilgamma_g(v^*)\tilgamma_g(\gmalpha{\sigma_\delta})(v^*) c^+(\sigma_\beta, \sigma_\delta) \gpalpha{g}{\sigma_\beta}(v) v \\
&= \tilgamma_g(v^*)\tilgamma_g(\gmalpha{\sigma_\delta})(v^*)
\beta_{g'}(\iotabar(Q_{\delta, +}^*)) c^+(\sigma_\beta, \theta) \sigma_\beta\iotabar(Q_{\delta, +}) \gpalpha{g}{\sigma_\beta}(v)v \\
&= \tilbeta_g(Q_{\delta, +}^*) v^* v^* z_g c^+(\sigma_\beta, \theta) \sigma_\beta\iotabar(Q_{\delta, +}) \gpalpha{g}{\sigma_\beta}(v)v \\
&= \tilbeta_g(Q_{\delta, +}^*) v^* v^* z_g c^+(\sigma_\beta, \theta)\sigma_\beta\iotabar(Q_{\delta, +})v\beta(v) \\
&= \tilbeta_g(Q_{\delta, +}^*) v^* v^* z_g c^+(\sigma_\beta, \theta)\iota\iotabar\beta\iota\iotabar(Q_{\delta, +})v\beta(v) \\
&= \tilbeta_g(Q_{\delta, +}^*) v^* v^* z_g c^+(\sigma_\beta, \theta)v\beta(v)\beta(Q_{\delta, +}).
\end{align*}

We will show that the middle part $v^* v^* z_g c^+(\sigma_\beta, \theta)v\beta(v)$ is equal to $(d\iota)^2\cdot 1$.
If this is the case, then we have $c^+_r(\beta, \delta) = \tilbeta_g(Q_{\delta, +}^*)\beta(Q_{\delta, +}) = \tilgamma_g(Q_{\delta, +}^*)\beta(Q_{\delta, +}) = c^+(\beta, \delta)$ by Lemma~\ref{lemma:GLoc_braiding_another_formula}, which is the desired statement.
Let us compute
\begin{align*}
v^* v^* z_g c^+(\sigma_\beta, \theta)v\beta(v) &= v^* v^* z_g \cdot \gamma_{g'}(\theta)(\iotabar(Q_{\beta, -}^*)) \cdot c^+(\theta, \gamma_{g'}(\theta))\iotabar(Q_{\beta, -}) v\beta(v) \\
&= v^* v^* z_g \cdot \gamma_{g'}(\theta)(\iotabar(Q_{\beta, -}^*))\cdot  c^+(\theta, \gamma_{g'}(\theta)) \theta(z_g^*)vvQ_{\beta, -} \\
&= v^* v^* z_g \cdot \gamma_{g'}(\theta)(\iotabar(Q_{\beta, -}^*))\cdot z_g^* c^+(\theta, \theta)vvQ_{\beta, -} \\
&= v^* v^* \theta(\iotabar(Q_{\beta, -}^*))  c^+(\theta, \theta)vvQ_{\beta, -} \\
&= Q_{\beta, -}^* v^*v^* c^+(\theta, \theta)vvQ_{\beta, -} \\
&= Q_{\beta, -}^* v^*v^* v v Q_{\beta, -} \\
&= (d\iota)^2\cdot 1 \ .
\end{align*}
This completes the proof.
\end{proof}

\vspace{20pt}

\noindent {\bf Acknowledgments.}
The author wishes to express his gratitude to Yasuyuki Kawahigashi for his constant support and many helpful comments. The author is supported by Leading Graduate Course for Frontiers of Mathematical Sciences and Physics. He is grateful for their financial support.

\bibliographystyle{abbrv}

\end{document}